\def\Tr{\mop{Tr}}
\def\E{\mathbf{E}}
\def\P{\mathbf{P}}
\def\hf{{\textstyle{1\over 2}}}
\def\${|\!|\!|}
\definecolor{darkred}{rgb}{0.9,0.1,0.1}
\def\sump{{\sum_{k+ \ell = m+n}}^{\hspace{-1em}\prime}\hspace{1em}}
\begin{document}

\title{Singular perturbations to semilinear stochastic\\  heat equations}
\author{Martin Hairer}
\institute{The University of Warwick, \email{M.Hairer@Warwick.ac.uk}}

\maketitle

\begin{abstract}
We consider a class of singular perturbations to the stochastic heat equation or semilinear variations thereof.  
The interesting feature of these
perturbations is that, as the small parameter $\eps$ tends to zero, their solutions converge to the `wrong' limit, i.e.\ they
do not converge to the solution obtained by simply setting $\eps = 0$. 
A similar effect is also observed for some (formally) small stochastic perturbations of a deterministic semilinear  parabolic PDE. 

Our proofs are based on a detailed analysis of the spatially rough component of the equations, combined 
with a judicious use of Gaussian concentration inequalities.
\end{abstract}

\section{Introduction}

A well-known `folklore' statement is that if a problem ($\Phi$) is well-posed, then the solutions to every `reasonable' 
sequence ($\Phi_\eps$) of approximating problems should converge to it. This statement is of course backed by a huge number of rigorous
mathematical statements, with virtually every theorem from numerical analysis being an example. As usual, the devil lies in the details, in this case
the meaning of the word `reasonable'. However, if the problem $\Phi_0$ obtained by formally setting $\eps = 0$
in $\Phi_\eps$ makes sense and is well-posed, then one does usually expect the solutions to $\Phi_\eps$ to converge to that
of $\Phi_0$. 

One particularly famous exception to this rule is given by approximations to stochastic differential equations:
if one constructs an approximation by smoothing out the driving noise, for example by piecewise linear interpolation on a scale $\eps$,
and then solving the corresponding random ODE, then the solutions converge to the Stratonovich interpretation of the original SDE \cite{MR0183023}. 
On the other hand, if one considers the
usual Euler approximations with a step size of length $\eps$, then these will converge to the It\^o interpretation \cite{MR1214374}.
In this case however, one may argue that the original problem isn't well-posed in the classical sense, since stochastic integrals
are too irregular to be defined pathwise in an unambiguous way. Another class of exceptions to this rule arises in the
calculus of variations where it happens routinely that a sequence of `energy functionals' $\CE_n$ converges to a limit $\CE$, 
but the corresponding sequence of minimisers converges to the minimiser of some other functional $\bar \CE \neq \CE$.
It is precisely to deal with this type of problems that the notion of  $\Gamma$-convergence was introduced \cite{GammaConv}.

The aim of this article is to provide a class of examples of stochastic PDEs where a similar type of inconsistency occurs, but without
having any ambiguity in the meaning of either the approximating or the limiting model. However, similar to the case
of the approximations to an SDE, the inconsistency will be a consequence of the roughness of the solutions (but this time it is
the spatial roughness that comes into play)
and the correction term that appears in the limit can be interpreted as a kind of local quadratic variation, just as in the 
It\^o-Stratonovich correction.

The aim of this article is to study the limit $\eps \to 0$ of a class of stochastic PDEs of the type
\begin{equs}\label{e:model}\tag{$\Phi_\eps$}
\d_t u_\eps &= \nu \d_x^2 u_\eps + f(u_\eps) + \sqrt 2 \xi(t,x)\\
&\qquad  - \eps^2 \d_x^4 u_\eps + \eps g(u_\eps)\d_x^2 u_\eps + \eps h(u_\eps)\bigl(\d_x u_\eps \otimes \d_x u_\eps\bigr) 
\end{equs}
where $\xi$ is space-time white noise\footnote{Recall that space-time white noise is the distribution-valued centred Gaussian process with covariance given by $\E \xi(x,t)\xi(y,s) = \delta(x-y)\delta(t-s)$.} and $x \in [0,2\pi]$. For simplicity, we consider 
periodic boundary conditions, but we do not expect the result to depend on this.
Note that additional constants in front of the noise term $\xi$ can always be eliminated
by a time change, whereas additional constants in front of the term $\d_x^4 u$ can be eliminated by redefining $\eps$, $f$, $g$ and $h$.
Here, we will assume that the solution $u$ takes values in $\R^n$ and that
\begin{equ}
f\colon \R^n \to \R^n\;,\quad g\colon \R^n \to L(\R^n, \R^n)\;,\quad h\colon \R^n \to L(\R^n \otimes \R^n, \R^n)\;,
\end{equ}
are functions such that $f$ and $h$ are of class $\CC^1$ and $g$ is $\CC^2$. 
It is not too difficult to show \cite{DaPrato-Zabczyk92,Hairer08} that \eref{e:model} has unique local weak and mild solutions in 
$L^\infty$, so that our model is well-posed. Note furthermore that the stochastic noise term is additive, so that there is no need to resort to
stochastic calculus to make sense of the solutions to \eref{e:model}. In particular, there is no ambiguity to the concept of solution to \eref{e:model}.
\footnote{Since the solutions to \eref{e:model} are not $\CC^2$ in general (they are only $\CC^\alpha$ for every $\alpha < {3\over 2}$),
the term $\eps g(u_\eps)\d_x^2 u_\eps$ has to be interpreted in the weak sense by performing one integration by parts.}

Similarly, the model where we set $\eps = 0$, namely
\begin{equ}[e:modellimitwrong]\tag{$\Phi_0$}
\d_t u_0 = \nu \d_x^2 u_0 + f(u_0) + \sqrt 2 \xi(t,x)\;,
\end{equ}
is perfectly well-posed and has unique local solutions in $L^\infty$ both in the mild and the weak sense.

It is therefore natural to expect that one has $u_\eps \to u_0$ in some sense as $\eps \to 0$. Surprisingly, this is not the case!
Indeed, the purpose of this article is to prove a convergence result for \eref{e:model} that can loosely be formulated as:
\begin{theorem}\label{theo:main}
The solutions $u_\eps$ to \eref{e:model} converge in probability as $\eps \to 0$ to the solutions $u$ to
\begin{equ}[e:limitmain]\tag{$\Phi$}
\d_t u = \nu \d_x^2 u + \bar f(u)  + \sqrt 2\xi(t,x)\;,
\end{equ}
where the effective nonlinearity $\bar f$ is given by
\begin{equ}[e:ftilde]
\bar f(u) = f(u) + {1\over 2\sqrt \nu} \Tr \bigl(h(u) - D_sg(u)\bigr)\;.
\end{equ}
Furthermore, for every $\kappa > 0$, the convergence rate is of order $\CO(\eps^{{1\over 2}-\kappa})$ in the $L^\infty$-norm.
\end{theorem}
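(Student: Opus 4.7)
The plan, inspired by the Wong--Zakai analysis for SDEs, is to decompose the solution into a Gaussian linear part and a smoother remainder, and then to show that the singular nonlinear terms get renormalised in a controlled way as $\eps \to 0$. Concretely, I would let $\psi_\eps$ solve the linear equation $\d_t \psi_\eps = \nu \d_x^2 \psi_\eps - \eps^2 \d_x^4 \psi_\eps + \sqrt 2\, \xi$ with zero initial data, and write $u_\eps = \psi_\eps + v_\eps$ (likewise $u = \psi + v$ for the limit equation). A Fourier computation shows that $\psi_\eps \to \psi$ in $\CC^\alpha$ for every $\alpha < 3/2$, and that the stationary variance of $\d_x \psi_\eps(x)$ equals
\[
\frac{1}{2\pi}\sum_{k} \frac{1}{\nu + \eps^2 k^2} \;\sim\; \frac{1}{2\eps\sqrt\nu} \qquad (\eps \to 0)\,,
\]
already identifying the prefactor $1/(2\sqrt\nu)$ appearing in $\bar f$.

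Next, I would use the mild formulation against the semigroup of $\nu \d_x^2 - \eps^2 \d_x^4$ and integrate the term $\eps g(u_\eps)\d_x^2 u_\eps$ by parts once, as anticipated in the footnote of the statement. One derivative is absorbed by the semigroup, while the other falls on $g(u_\eps)$ and produces $-\eps D_s g(u_\eps)(\d_x u_\eps \otimes \d_x u_\eps)$; only the symmetric part $D_s g$ survives because $\d_x u_\eps \otimes \d_x u_\eps$ is a symmetric tensor. Combined with the $h$-term, the whole non-standard contribution to the equation reduces to controlling
\[
N_\eps \;:=\; \eps\bigl(h(u_\eps) - D_s g(u_\eps)\bigr)\bigl(\d_x u_\eps \otimes \d_x u_\eps\bigr)\,.
\]

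Splitting $\d_x u_\eps = \d_x \psi_\eps + \d_x v_\eps$, the cross terms and the pure-remainder term $\eps(\d_x v_\eps)^{\otimes 2}$ are subleading provided $v_\eps$ remains bounded in $\CC^\alpha$ with $\alpha > 1$, since they carry at most one factor of the rough Gaussian component. The dominant piece is therefore $\eps(\d_x \psi_\eps)^{\otimes 2}$; its expectation converges pointwise to $\tfrac{1}{2\sqrt\nu}\mathrm{Id}$ by the computation above, the fluctuations live in the second Wiener chaos of $\xi$, and I would bound them via a Borell-type Gaussian concentration inequality together with a Kolmogorov continuity argument in $(t,x)$, obtaining a rate of $\CO(\eps^{1/2-\kappa})$ uniformly in space-time. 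Contracting the surviving $\mathrm{Id}$ against $(h - D_s g)(u_\eps)$ yields exactly the trace appearing in $\bar f$.

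Finally, with $N_\eps = \tfrac{1}{2\sqrt\nu}\Tr\bigl(h(u_\eps) - D_s g(u_\eps)\bigr) + \CO(\eps^{1/2-\kappa})$ in hand, a fixed-point / Gronwall argument for $v_\eps$ in $\CC^\alpha$ with $1 < \alpha < 3/2$, combined with a stopping-time truncation to accommodate the possibly super-linear behaviour of $f$, $g$, $h$, transfers the same rate to $u_\eps - u$ and closes the proof. The main difficulty is the analysis of $\eps(\d_x \psi_\eps)^{\otimes 2}$: this object is formally $\eps \cdot \infty$ and must be treated as a carefully renormalised quantity, and obtaining \emph{pointwise} convergence with the sharp rate $\eps^{1/2-\kappa}$ (rather than mere distributional convergence on average) will require exploiting the regularisation of the fourth-order operator at spatial scales below $\eps$ in order to pin down the second-chaos norms of $\d_x \psi_\eps$ precisely.
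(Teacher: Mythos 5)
Your overall architecture — split off the Gaussian linear part, integrate the $g$-term by parts once to produce $-\eps D_s g(u_\eps)(\d_x u_\eps \otimes \d_x u_\eps)$, combine with $h$, expand $\d_x u_\eps = \d_x\psi_\eps + \d_x v_\eps$, identify the dominant singular piece $\eps(\d_x\psi_\eps)^{\otimes 2}$, then close with stopping times and Gronwall — is indeed the strategy of the paper, and the variance computation identifying $1/(2\sqrt\nu)$ is correct. But there is a genuine flaw in the central technical step.

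You claim to bound the fluctuations of $\eps(\d_x\psi_\eps)^{\otimes 2} - \tfrac{1}{2\sqrt\nu}\mathrm{Id}$ ``uniformly in space-time'' with rate $\eps^{1/2-\kappa}$, via Borell concentration plus Kolmogorov continuity in $(t,x)$. This cannot work: at a fixed point $x$, the random variable $\d_x\psi_\eps(x)$ is centred Gaussian with variance of order $\eps^{-1}$, so $\eps(\d_x\psi_\eps(x))^2$ has variance of order $1$, \emph{not} $o(1)$. The quantity $\eps(\d_x\psi_\eps)^{\otimes 2} - A$ does not go to zero pointwise or in $L^\infty$ in any sense, let alone at rate $\eps^{1/2-\kappa}$; it only goes to zero after being tested against a fixed function, i.e.\ in a negative Sobolev norm $H^{-\gamma}$ with $\gamma > 1/2$. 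Your final parenthetical (``rather than mere distributional convergence on average'') correctly sniffs out the tension, but the resolution you propose — ``pointwise convergence with the sharp rate'' — is precisely what is unavailable. The paper instead proves (Proposition~4.1, via a Gaussian concentration argument applied to a cleverly chosen functional that adds an $\eps \|w\|^4$ term to control the Lipschitz constant) a probabilistic bound on $\|v\,(\eps(\d_x\psi^\eps)^{\otimes 2}-A)\|_{-\gamma}$ for $v\in H^\alpha$, $\alpha>1/2$, and then recovers positive regularity by exploiting the smoothing of the semigroup $S_\eps$ via the factorisation method. Without that last ingredient — trading a negative Sobolev index on the nonlinearity against the regularisation of $S_\eps$ inside the Duhamel integral — your Gronwall loop cannot close, since the error term you feed into it is not small in the norm you propose to iterate.

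A second, more minor point: taking $\psi_\eps$ with zero initial data rather than the stationary solution complicates the computation of the local quadratic variation (you would need to show the transient is negligible, which introduces time-weighted estimates near $t=0$); the paper's choice of stationary $\psi^\eps$, paid for by a shifted notion of ``initial condition'' $u_0 \in H^{3/2}$, makes the averaging identity exact at every time $t$.
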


\begin{remark}\label{rem:notation}
The partial trace of $h$ is defined, in components, as $(\Tr h)_i = \sum_j h_{ijj}$
and the symmetrised derivative of $g$ is given by $(D_sg)_{ijk} = {1\over 2} \bigl(\d_k g_{ij} + \d_j g_{ik}\bigr)$.
\end{remark}

\begin{remark}
Note that no growth condition is assumed on $f$, $g$ and $h$. Therefore,
the limiting problem \eref{e:limitmain} could potentially have a finite explosion time $\tau^*$. In this case, the approximation 
result obviously holds only up to $\tau^*$. Furthermore, it requires to consider initial conditions that are `nice' in the sense that
they have the same structure as the solutions to \eref{e:limitmain} at positive times. A precise formulation of these conditions
can be found in the rigorous version of the statement, Theorem~\ref{theo:mainrigor} below.
\end{remark}

\begin{remark}
The precise form of the linear operator in \eref{e:model}, namely $\CL_\eps = \nu \d_x^2 - \eps^2 \d_x^4$ is not 
important. Indeed, if $P$ is any polynomial with positive leading coefficient and such that $P(0) = 1$, we could 
have replaced $\CL_\eps$ by $\nu \d_x^2 P(-\eps^2 \d_x^2)$, yielding similar results, but with the factor ${1\over 2}$
in front of the correction term in \eref{e:ftilde} replaced by some other constant.
\end{remark}

\begin{remark}
The convergence rate $\eps^{{1\over 2} - \kappa}$ is optimal in the sense that even in the `trivial' linear
case $f = g = h = 0$, the $L^\infty$-distance between $u_\eps$ and $u$ is no better than $\eps^{1\over 2}$ times some
logarithmic correction factor. 
\end{remark}

In the case $g=0$, one interpretation of our result is that the solutions to
\begin{equ}
\d_t \tilde u_\eps = \nu \d_x^2 \tilde u_\eps - \eps^2 \d_x^4 \tilde u_\eps + f(\tilde u_\eps) + \eps h(\tilde u_\eps) \bigl(\d_x \tilde u_\eps \diamond \d_x \tilde u_\eps\bigr)+ \sqrt 2 \xi(t,x)\;,
\end{equ}
do indeed converge as $\eps \to 0$ to the solutions to the corresponding system where we formally set $\eps = 0$, namely
\begin{equ}
\d_t \tilde u = \nu \d_x^2 \tilde u + f(\tilde u) + \sqrt 2 \xi(t,x)\;.
\end{equ}
In this equation, the Wick product $\diamond$ should be interpreted as the Wick product with respect to the Gaussian
structure induced on the solution space by the linearised equation, as for example in \cite{MR815192,MR1462228,MR2016604,DaPT}. 
In our situation, this means that 
\begin{equ}
\d_x \tilde u_\eps \diamond \d_x \tilde u_\eps \eqdef \d_x \tilde u_\eps \otimes \d_x \tilde u_\eps - \E \bigl(\d_x v_\eps \otimes \d_x v_\eps\bigr)\;,
\end{equ}
where $v_\eps$ is the stationary solution to the linearised equation $\d_t v_\eps = \nu \d_x^2 v_\eps - \eps^2 \d_x^4 v_\eps + \sqrt 2 \xi$.
Note that this notion of Wick product is  \textit{different} in general
from the notion obtained by using the Gaussian structure induced by the driving noise (i.e.\ the Wick product of Wiener chaos)
as in \cite{WickBook,MR1743612,Boris}. In our situation, we actually expect both notions of Wick product to lead to the same limit
as $\eps \to 0$, but we have no proof of this statement.

\subsection{Extensions and related problems}

There are a number of extensions to these results that are worth mentioning. Note first that if, instead of space-time white noise,
we took any noise process with more regular samples (for example such that its covariance operator is given by $(-\d_x^2)^{-\alpha}$ for some
$\alpha > 0$), then the solutions to \eref{e:model} would indeed converge to
those of \eref{e:modellimitwrong}. However, if $\alpha \in (0,{1\over 2})$ and one replaces the factor $\eps$ in 
front of the nonlinearity in \eref{e:model} by $\eps^{1-2\alpha}$, then one can retrace the proofs in this article to show that one
has convergence to \eref{e:limitmain}, but with the factor $1/(2\sqrt \nu)$ in \eref{e:ftilde} replaced by
\begin{equ}
{1\over \pi \nu^{\alpha + {1\over 2}}} \int_0^\infty {dx \over x^{2\alpha}(1+x^2)}\;.
\end{equ}
Similarly, one could modify the linear part of the perturbation. Indeed, given a polynomial $Q$ with $Q(0) = 1$ and positive leading
coefficient, we could consider instead of \eref{e:model} the sequence of problems
\begin{equs}
\d_t u_\eps &= \nu Q\bigl(-\eps^2 \d_x^2\bigr) \d_x^2 u_\eps + f(u_\eps) + \sqrt 2 \xi(t,x)\\
&\qquad + \eps g(u_\eps)\d_x^2 u_\eps + \eps h(u_\eps)\bigl(\d_x u_\eps \otimes \d_x u_\eps\bigr)\;.
\end{equs}
Again, inspection of the proofs given in this article shows that one has convergence to \eref{e:limitmain}, but this time
the prefactor of the correction term is given by
\begin{equ}
{1\over \pi \nu} \int_0^\infty {dx \over Q(x^2)}\;.
\end{equ}
More interestingly, one can also consider situations where the limiting process is deterministic. For example, one can retrace the
steps of the proof of Theorem~\ref{theo:main} for the family of problems given by
\begin{equs}
\d_t u_\eps &= \nu \d_x^2 u_\eps + f(u_\eps) + \sqrt 2 \eps^\gamma \xi(t,x)\\
&\qquad  - \eps^2 \d_x^4 u_\eps + \eps^{1-2\gamma} g(u_\eps)\d_x^2 u_\eps + \eps^{1-2\gamma} h(u_\eps)\bigl(\d_x u_\eps \otimes \d_x u_\eps\bigr)\;,
\end{equs}
and to show that for $\gamma \in (0,{1\over 2})$, the solutions converge to the solutions to the semilinear heat equation
\begin{equ}
\d_t u = \nu \d_x^2 u + \bar f(u)\;,
\end{equ}
with $\bar f(u)$ as in \eref{e:ftilde}.

The borderline case $\gamma = {1\over 2}$ can also be treated, at least when $g = 0$, and we will present this case in detail since it 
appears to be more interesting and arises from a rather natural scaling. Consider the family of problems
$v_\eps$ given by
\begin{equ}[e:mainv]
\d_t v_\eps = \nu \d_x^2 v_\eps - \eps^2 \d_x^4 v_\eps + f(v_\eps) + h(v_\eps)\bigl(\d_x v_\eps \otimes \d_x v_\eps\bigr) + \sqrt {2\eps} \xi(t,x)\;.
\end{equ}
We have the following result, which will be restated rigorously and proven in Section~\ref{sec:thm15} below.

\begin{theorem}\label{theo:convv}
As $\eps \to 0$, the solution $v_\eps$ to \eref{e:mainv} converges to $v$, where
\begin{equ}
\d_t v = \nu \d_x^2 v + \bar f(v) + h(v)\bigl(\d_x v \otimes \d_x v\bigr)\;.
\end{equ}
Here, $\bar f = f + {1\over 2\sqrt \nu} \Tr h$, and the convergence takes place in $H^\beta$ for every $\beta \in ({1\over 2},1)$.
\end{theorem}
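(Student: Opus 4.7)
The natural strategy is to decompose $v_\eps = \psi_\eps + w_\eps$, where $\psi_\eps$ is the stationary solution of the linear problem $\d_t \psi_\eps = \CL_\eps \psi_\eps + \sqrt{2\eps}\xi$ with $\CL_\eps = \nu\d_x^2 - \eps^2\d_x^4$, started from its invariant measure. A direct Fourier computation shows that the $k$-th mode has variance $\eps/(k^2(\nu+\eps^2 k^2))$, from which I read off two essential facts. First, $\|\psi_\eps\|_{H^\beta}^2 = \CO(\eps^{2-2\beta})$ in mean, so $\psi_\eps \to 0$ in $C([0,T];H^\beta)$ for every $\beta < 1$. Second, evaluated pointwise,
\begin{equ}
\E\bigl[\d_x\psi_\eps(x) \otimes \d_x\psi_\eps(x)\bigr] \to \frac{1}{2\sqrt\nu}\,I_n \qquad (\eps\to 0)\;,
\end{equ}
via the Riemann sum $\int_\R \eps/(2\pi(\nu+\eps^2 k^2))\,dk = 1/(2\sqrt\nu)$. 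Contracting this $I_n$ against $h$ yields $\Tr h$, and this non-vanishing of the diagonal tensor square of $\d_x\psi_\eps$ is the entire source of the correction $\bar f - f = \frac{1}{2\sqrt\nu}\Tr h$.

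The remainder $w_\eps$ satisfies $\d_t w_\eps = \CL_\eps w_\eps + f(v_\eps) + h(v_\eps)(\d_x v_\eps \otimes \d_x v_\eps)$. I would expand the tensor square into four contributions: the ``smooth$\times$smooth'' piece $h(v_\eps)(\d_x w_\eps \otimes \d_x w_\eps)$, the two mixed ``smooth$\times$rough'' pieces $h(v_\eps)(\d_x w_\eps \otimes \d_x\psi_\eps + \d_x\psi_\eps\otimes\d_x w_\eps)$, and the ``rough$\times$rough'' piece $h(v_\eps)(\d_x\psi_\eps \otimes \d_x\psi_\eps)$. The last one is split further as
\begin{equ}
h(v_\eps)\cdot\E\bigl[\d_x\psi_\eps \otimes \d_x\psi_\eps\bigr] + h(v_\eps)\cdot\bigl(\d_x\psi_\eps \otimes \d_x\psi_\eps - \E[\d_x\psi_\eps \otimes \d_x\psi_\eps]\bigr)\;,
\end{equ}
the first piece converging to $\frac{1}{2\sqrt\nu}\Tr h(v_\eps)$, which combines with $f(v_\eps)$ to produce $\bar f(v_\eps)$. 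The two mixed terms belong to the first Wiener chaos and the centred diagonal piece to the second, and both should vanish in $C([0,T];H^\beta)$ after being convolved against the semigroup $e^{t\CL_\eps}$; this is where Gaussian concentration inequalities, combined with hypercontractivity to upgrade $L^p$ bounds to almost sure bounds, enter the argument. Working in $H^\beta$ for some $\beta\in(1/2,1)$ — the range in which $\d_x v \otimes \d_x v$ has classical meaning by Sobolev multiplication — and localising by a stopping time at which $\|w_\eps\|_{H^\beta}$ exits a fixed ball, a Gronwall argument then yields $w_\eps \to v$ in $C([0,T];H^\beta)$, whence $v_\eps = w_\eps + \psi_\eps \to v$ in the same topology.

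The principal obstacle is the quasilinear nature of the limit equation: $h(v)(\d_x v \otimes \d_x v)$ is only locally Lipschitz in $H^\beta$, so the Gronwall step is closed only up to the stopping time, and one must separately argue that this stopping time can be pushed arbitrarily close to the existence time of $v$ with probability tending to one. The second delicate point is the control of the second-chaos object $\d_x\psi_\eps \otimes \d_x\psi_\eps - \E[\d_x\psi_\eps \otimes \d_x\psi_\eps]$, whose size does \emph{not} vanish pointwise: only the combination of centredness with the smoothing furnished by $e^{t\CL_\eps}$ forces its contribution to $w_\eps$ to tend to zero — a Wick-type cancellation that is specifically what distinguishes the present borderline $\gamma = 1/2$ regime both from Theorem~\ref{theo:main} and from the easier $\gamma < 1/2$ case mentioned above.
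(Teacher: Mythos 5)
Your proposal mirrors the paper's argument closely: the decomposition $v_\eps = \sqrt\eps\psi^\eps + (v+\rho)$, the identification of $\E[\eps\,\d_x\psi^\eps\otimes\d_x\psi^\eps]\to\frac{1}{2\sqrt\nu}I$ via a Riemann sum as the source of the correction $\bar f - f$, the five-way splitting of $G(v_\eps)-\bar G(v)$, the Gaussian concentration bound on the centred rough$\times$rough piece, semigroup smoothing via the factorisation method, and localisation-plus-Gronwall in $H^\alpha$ for $\alpha$ slightly above $1/2$ are all exactly what the paper does. One precision worth flagging: since $h(v_\eps)$ is itself random and correlated with $\psi^\eps$, the centred piece $h(v_\eps)\bigl(\eps\,\d_x\psi^\eps\otimes\d_x\psi^\eps - A\bigr)$ is \emph{not} an honest second-chaos element, so hypercontractivity does not apply directly; the paper instead uses a Lipschitz-concentration bound formulated conditionally on the coefficient (Proposition~\ref{prop:varbit}/Corollary~\ref{cor:conc}), where the stopping time furnishes the needed almost-sure control of $\|h(v_\eps)\|_\alpha$, which is the right way to make your sketch rigorous.
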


Finally, an even more interesting class of generalisations is obtained by removing the prefactor $\eps$ in front of the
nonlinearity altogether, as in \eref{e:mainv}, but keeping a noise term of order $1$ and subtracting the corresponding 
`infinite correction' from the nonlinearity in the hope that a finite object is obtained
in the limit. For example, one would like to be able to study the limit $\eps \to 0$ in
\begin{equ}
\d_t u_\eps = \nu \d_x^2 u_\eps - \eps^2 \d_x^4 u_\eps + h(u_\eps)\bigl(\d_x u_\eps \otimes \d_x u_\eps\bigr) - {\Tr h(u_\eps) \over 2\sqrt \nu \eps} + \sqrt 2 \xi(t,x)\;.
\end{equ}
In the particular case $n=1$ and $h=1$, this reduces to the KPZ equation and it can be shown, using the Hopf-Cole transform,
that a limit exists \cite{MR1462228}, at least for a slightly different kind of approximation. 
Unfortunately, no such tool exists in the general case and the problem of making sense
of a KPZ-type equation directly is still wide open, although some results have been obtained in 
this direction \cite{MR1743612,MR1888875,MR2365646}.

\subsection{Structure of the article}

The remainder of the article is structured as follows. In Section~\ref{sec:motivation}, we show how equations of the type
\eref{e:model} arise in a class of path sampling problems, thus giving an additional motivation to this work. This is followed 
by a brief heuristic calculation which explains why the limiting equation should have the nonlinearity $\bar f$ instead of $f$
and shows how to easily compute the prefactor $1/(2\sqrt \nu)$ or the variations thereof that were discussed in the previous section.
In Section~\ref{sec:apriori}, we give a number of preliminary calculations that are useful for the proof and that show
how the solutions to these equations behave. Section~\ref{sec:averaging} is devoted to the proof of the main technical
bound that allows to control the difference between $\eps(\d_x u_\eps \otimes \d_x u_\eps)$ and a suitable
multiple of the identity matrix in some negative Sobolev norm. This bound is the main ingredient for the
proof of Theorem~\ref{theo:main} which is reformulated in a rigorous way and proved in Section~\ref{sec:main}.

\subsection*{Acknowledgements}

{\small
I would like to thank Sigurd Assing, G\'erard Ben Arous, Dirk Bl\"omker, Weinan E, Jin Feng, Jan Maas, Andrew Majda, Carl Mueller,
Boris Rozovsky, Andrew Stuart, 
Jochen Vo\ss\ and Lorenzo Zambotti for stimulating discussion on this and related problems. I am grateful to the
referee for pointing out the literature on Mosco convergence of Dirichlet forms (see Remark~\ref{rem:Mosco})
and the link to $\Gamma$-convergence.

Financial support was kindly provided by the EPSRC through grants EP/E002269/1 and EP/D071593/1, as
well as by the Royal Society through a Wolfson Research Merit Award.
}

\section{A motivation from path sampling}
\label{sec:motivation}

Besides pure mathematical interest and the, admittedly rather tenuous, connection to the KPZ equation discussed in
the previous section, the study of this
kind of singular limit is motivated by the following path sampling problem.
Consider the ordinary differential equation describing gradient motion of a stochastic particle in a smooth potential $V$:
\begin{equ}[e:gradient]
dq = -\nabla V(q)\,dt + \sqrt{2T}\,dW(t)\;,
\end{equ}
as well as the Langevin equation for a massive particle of mass $m$ subject to the same potential:
\begin{equ}[e:Langevin]
d q = \dot q \,dt\;,\qquad m\, d\dot q = - \nabla V(q)\,dt - \dot q\,dt + \sqrt{2T}\,dW(t)\;.
\end{equ}
(We consider this as a $2n$-dimensional SDE for the pair $(q,\dot q)$.) It is well-known (see for example \cite{MR2099730,MR2130323}
for a recent exposition of this fact in the context of a larger class of problems) that if one takes
the limit $m \to 0$ in the second equation, one recovers the solution to \eref{e:gradient}. This is usually referred to
as the Smoluchowski-Krames approximation to \eref{e:gradient} and can also be guessed
in a `naive' way by simply deleting the term $m\, d\dot q$ appearing in the left hand side of the second identity in
\eref{e:Langevin} and noting that $\dot q\, dt = dq$ from the first identity.

Assume now that $V$ has a critical point at the origin
and consider a `bridge' for both \eref{e:gradient}
and \eref{e:Langevin}. In the case of \eref{e:gradient}, this is a solution starting at the origin and conditioned 
to return to the origin after a fixed time interval (which we assume to be equal to $\pi$ in order to simplify some expressions in the sequel). 
In the case of \eref{e:Langevin},
this is a sample from the stationary solution conditioned to pass through the origin at times $0$ and $\pi$.
Denote by $\mu_m$ the bridge for \eref{e:Langevin} and by $\mu_0$ the bridge for \eref{e:gradient} so that both $\mu_m$ and
$\mu_0$ are probability measures on the space $\CC_0([0,\pi],\R^n)$ of continuous functions vanishing at their endpoints.
  
It was then shown in \cite{HairerStuartVossWiberg05,HairerStuartVoss07} under some regularity and growth assumptions on $V$ that $\mu_0$ is the invariant measure
for the stochastic PDE given by 
\begin{equ}[e:sampleGradient]
du^i = {1\over 2T} \d_t^2 u^i\,d\tau - {1\over 2T} \d_{ij}^2 V(u) \d_jV(u)\, d\tau + {1\over 2} \d_{ijj}^3 V(u)\, d\tau + \sqrt 2 \, dW(\tau)\;,
\end{equ}
where $W$ is a cylindrical process on $L^2([0,\pi],\R^n)$ (so that ${dW \over dt}$ is space-time white noise) 
and the linear operator $\d_t^2$ is endowed with Dirichlet boundary conditions. 
Summation over the index $j$ is implied in those terms where it appears. Note that the variable $t$ plays the role of \textit{space}
in this stochastic PDE, whereas the role of time is played by the `algorithmic time' $\tau$.

Actually, \eref{e:sampleGradient} is even reversible with 
respect to $\mu_0$ and the corresponding Dirichlet form is given by
\begin{equ}[e:Dirichlet]
\CE_0(\phi,\phi) = \int_{L^2([0,\pi],\R^n)} \|\nabla \phi(u)\|^2\,\mu_0(du)\;,
\end{equ}
where $\nabla$ denotes the $L^2$-gradient.

On the other hand, it was shown in \cite{Hypo} that $\mu_m$ is the invariant measure
for the fourth-order SPDE given by
\begin{equs}[e:sampleLangevin]
du^i &= {1\over 2T} \d_t^2 u^i\, d\tau - {m^2 \over 2T} \d_t^4 u^i\,d\tau - {1\over 2T} \d_{ij}^2V(u) \d_jV (u) \,d\tau \\
& \quad - {m\over 2T} \d_{ij\ell}^3 V(u) \d_x u^\ell  \d_x u^j \, d\tau - {m\over T} \d_{ij}^2 V(u) \d_x^2 u^j \, d\tau + \sqrt 2 \, dW(\tau)\;,
\end{equs}
where the linear operator is endowed with Dirichlet boundary conditions (for both $u$ and its second derivative).
Here, summation over repeated indices is also implied. The fact that $0$ is a critical point for $V$ was used in order to derive \eref{e:sampleLangevin}. If this was not the case, then
the boundary conditions for the linear operator would be somewhat more involved, but the final result would be the same.
Again, \eref{e:sampleLangevin} is reversible with respect to $\mu_m$ and the corresponding Dirichlet form $\CE_m$
is given by
\eref{e:Dirichlet} with $\mu_0$ replaced by $\mu_m$.

It is therefore a natural question to ask whether \eref{e:sampleGradient}
can also be recovered as the limit of \eref{e:sampleLangevin} when the mass $m$ tends to zero.  
On the one hand, one expects this to be the case since $\CE_m(\phi,\phi) \to \CE_0(\phi,\phi)$ for sufficiently regular test
functions $\phi$. 

\begin{remark}\label{rem:Mosco}
Note that the convergence of Dirichlet forms when applied to nice test functions does not in general imply the
convergence of the corresponding stochastic processes since it does not in general imply the convergence of the 
resolvents. However, starting with the seminal works of Mosco \cite{MoscoOld,Mosco}, a number of authors have investigated sufficient
conditions for the resolvent convergence of Dirichlet forms, see for example \cite{KuShi,Kol,ASZ,pugachev,Kostja}.
While our setting does not formally seem to be covered by these works, it `morally' falls into
the same category. The main difference is that our result also applies to a class of non-reversible
processes, while we also obtain slightly stronger convergence results.
\end{remark}

On the other hand, taking the `naive' limit 
$m \to 0$ in \eref{e:sampleLangevin} by simply deleting all the terms that contain a factor $m$ or $m^2$
yields
\begin{equ}
du^i = {1\over 2T} \d_x^2 u^i\,d\tau - {1\over 2T} \d_{ij}^2V \d_jV(u)\, d\tau  + \sqrt 2 \, dW(\tau)\;.
\end{equ}
While this is indeed very similar in structure to \eref{e:sampleGradient}, we note that the term containing third-order
derivatives of $V$ is missing! 

Theorem~\ref{theo:main} does indeed explain this apparent discrepancy since
 \eref{e:sampleLangevin} is precisely of the form \eref{e:model} if we make the identifications
\begin{equ}
\eps = {m \over \sqrt{2T}}\;,\quad \nu = {1\over 2T}\;,
\end{equ} 
and define the functions
\begin{equ}
f_i(u) = -{\d_{ij}^2V(u) \d_jV (u) \over 2T}\;,\quad
g_{ij}(u) =  - {2 \d_{ij}^2 V(u)\over \sqrt{2 T}}  \;,\quad h_{ij\ell}(u) = - { \d_{ij\ell}^3 V(u) \over \sqrt{2T}}\;.
\end{equ}
We then have
\begin{equs}
\bar f_i (u) &= f_i(u) + \sqrt{T\over 2} \bigl(h_{ijj}(u) - \d_j g_{ij}(u)\bigr) \\
& = -{1\over 2T}\d_{ij}^2V(u) \d_jV (u) - \sqrt{T\over 2} { \d_{ijj}^3 V(u) -2 \d_{ijj}^3 V(u) \over \sqrt{2T}} \\
& = -{1\over 2T}\d_{ij}^2V(u) \d_jV (u) + {1\over 2} \d_{ijj}^3 V(u)\;.
\end{equs}
It therefore follows at once from Theorem~\ref{theo:main} that \eref{e:sampleGradient} can indeed be recovered
as the limit as $m \to 0$ of \eref{e:sampleLangevin} as expected at the level of the corresponding Dirichlet forms.\footnote{Strictly speaking, Theorem~\ref{theo:main}
does not apply to this situation since there we consider periodic rather than Dirichlet b.c.'s. We do not expect this to make any difference, but some of the arguments 
would be slightly more involved.}

\subsection{Heuristic explanation}

Before we turn to the rigorous formulation of Theorem~\ref{theo:main} and to its proof, let us give a formal argument
why a careful analysis of \eref{e:model} reveals that $\bar f$ should indeed have the form given in \eref{e:ftilde}.  
We expand $u_\eps$ into Fourier modes $u_{\eps,k} \in \R^d$ (we drop the subscript $\eps$ from now on
and simply write $u_k$ instead of $u_{\eps,k}$), so that 
\begin{equ}[e:defek]
u_\eps(x,t) = \sum_{k \in \Z} u_k(t)\,e_k(x)\;,\qquad 
e_k(x) = {\exp(ikx) \over \sqrt{2\pi}} \;.
\end{equ}
Since we only consider real-valued solutions, we furthermore have the constraint $u_{-k} = \bar u_k$.
One would then expect the `high modes' to behave roughly like the linear part of \eref{e:model}, that is for $k \gg 1$
one would expect to have $u_k(t) \approx \psi^\eps_k(t)$ with
\begin{equ}[e:SC]
d\psi_k^{\eps,i} = \bigl(-\nu k^2 - \eps^2 k^4\bigr)\psi_k^{\eps,i}\,dt + \sqrt 2\, dW_k^i(t)\;,
\end{equ} 
where the $W_k^i$'s are i.i.d.\ standard complex-valued Wiener processes. For fixed $t$, this means that $u_k^i(t) \approx \CN(0, (\nu k^2 + \eps^2 k^4)^{-1})$,
which is the invariant measure for \eref{e:SC}. 

We treat the two terms $g(u) \d_x^2 u$ and $h(u) \bigl(\d_x u \otimes \d_x u\bigr)$ appearing in \eref{e:model} separately,
starting with the second term. The main contribution is expected to come from the `diagonal terms' where no cancellations occur.
Therefore, if these terms give a finite contributions to the limiting equation, one can reasonably expect the remaining terms to give
a vanishing contribution. In order to compute the contribution of one of these terms, consider a product of the form
$\phi = v w^2$, where $v$ is a function that has mainly low-frequency components and $w$ has mainly high-frequency components that are
furthermore independent and of zero mean. Since we have the identity
\begin{equ}
e_k(x) e_\ell(x) e_m(x) = {1\over 2\pi} e_{k+\ell+m}(x)\;,
\end{equ}
it follows that the components of $\phi$ are given by
\begin{equ}[e:prodphi]
\phi_n = {1\over 2\pi} \sum_{k+\ell+m = n} v_m w_k w_\ell  \;,
\end{equ}
Since we assumed that the components $w_k$ are independent and that $v_m$ is `small' for `large' values of $m$, it is a reasonable 
expectation that the main contribution
to this sum stems from the terms such that $k = -\ell$ (recall that the $w_k$ are complex-valued random variables so that $\E w_k^2 = 0$, but $\E w_k w_{-k} = \E |w_k|^2 \neq 0$), so that
\begin{equ}
\phi_n \approx {v_n \over 2\pi} \sum_{k \in \Z} |w_k|^2= {v_n \over 2\pi} \|w\|^2\;.
\end{equ}
Turning back to the term $h(u) \bigl(\d_x u \otimes \d_x u\bigr)$, these considerations and the fact that the high-frequency components of $u_\eps$
are expected to be close to those of $\psi^\eps$ suggest that one should have
\begin{equ}
\eps h_{ijk}(u) \bigl(\d_x u_j \d_x u_k\bigr) \approx \eps h_{ijj}(u) \bigl(\d_x \psi^{\eps,j}\bigr)^2 \approx \delta_{kj}{\eps\over 2\pi} h_{ijj}(u)\, \|\d_x \psi^j\|^2\;.
\end{equ}
On the other hand, one has 
\begin{equs}
{\eps \over 2\pi} \|\d_x \psi^i\|^2 &\approx {\eps \over 2\pi} \sum_{k \neq 0} {k^2 \over \nu k^2 + \eps^2 k^4}
\approx {1 \over 2\pi} \sum_{k \in \Z} {\eps \over \nu  + (\eps k)^2} \\
&\approx {1\over 2\pi} \int_{-\infty}^\infty {dk \over \nu +k^2} = {1\over 2\sqrt \nu}\;,
\end{equs}
so that one does indeed expect to have a contribution of
\begin{equ}
\eps h(u) \bigl(\d_x u \otimes \d_x u\bigr) \approx {1\over 2\sqrt \nu} \Tr h(u)\;.
\end{equ}

Let us now turn to the term containing $g$. 
If $g$ was constant, then this term could be absorbed into the constant $\nu$, thus not contributing to
the limit. However, if $g$ is not constant, then the high-frequency components of $g$ can interact with the high-frequency components
of $\d_x^2 u$ in order to produce a non-vanishing contribution to the final result. If the high-frequency part of $u$ is given by $\psi$,
then the high-frequency part of $g(u)$ is in turn expected to be given by $Dg(u) \psi$, so that a reasoning similar to before
yields one to expect
\begin{equ}
\eps g(u) \d_x^2 u \approx \eps Dg(u) \bigl(\psi \otimes \d_x^2\psi\bigr)
\approx \Tr Dg(u) {\eps\over 2\pi} \sum_{|k| \gg 1} {-k^2 \over \nu k^2 + \eps^2 k^4} \approx -{ \Tr Dg(u) \over 2\sqrt \nu}\;.
\end{equ}
One can also reach this conclusion by noting that $\eps g(u) \d_x^2 u = \eps \d_x \bigl(g(u) \d_x u\bigr) - \eps g'(u) \bigl(\d_x u\bigr)^2$
and arguing that the first term should not matter because the first derivative gets `swallowed' by the regularising properties of the
Laplacian. This is actually the argument that will be used later on in the proof.

\section{Preliminary calculations}
\label{sec:apriori}

Let us rewrite the solutions to \eref{e:limitmain} and \eref{e:model} as
\minilab{e:defequ}
\begin{equs}
u(t) &= S(t)u_0 + \int_0^t S(t-s)F(u(s))\,ds + \psi^0(t)\;,\label{e:maineq}\\
u_\eps(t) &= S_\eps(t)u_0 + \int_0^t S_\eps(t-s)F_\eps(u_\eps(s))\,ds + \psi^\eps(t)\;,\label{e:epseq}
\end{equs}
where we set
\minilab{e:defequ}
\begin{equ}[e:defF]
F_\eps(u) = 1+f(u) + \eps g(u) \d_x^2 u + \eps h(u) \bigl(\d_x u \otimes \d_x u\bigr)\;,\quad
F(u) = 1+\bar f(u)\;.
\end{equ}
Here and throughout the remainder of this article, we denote by 
$\psi^\eps$ and $\psi^0$ the \textit{stationary} solutions to the linearised equations
\begin{equs}
d\psi^\eps (t) &= \bigl(\nu \d_x^2 -1\bigr)\psi^\eps(t)\,dt - \eps^2 \d_x^4  \psi^\eps(t)\,dt + \sqrt 2 dW(t)\;,\\
d\psi^0 (t) &= \bigl(\nu \d_x^2 -1\bigr)\psi^0(t)\,dt + \sqrt 2 dW(t)\;.
\end{equs}
Here, $W$ is a standard cylindrical Wiener process on $\CH = L^2([0,2\pi],\R^n)$
and the operators $\CL = \nu \d_x^2-1$ and $\CL_\eps = \nu \d_x^2-1 - \eps^2 \d_x^4$ are both endowed with periodic boundary conditions.

\begin{remark}
The reason why we subtract the constant $1$ to the definitions of $\psi^\eps$ and $\psi^0$ is so that the $0$-mode has also a stationary regime.
This is also why we then add $1$ to the definition of $F$ to compensate.
\end{remark}

We denote by $S(t)$ the semigroup generated by $\CL$ and by $S_\eps(t)$ the semigroup generated by $\CL_\eps$, so that
\begin{equ}
\psi^0 (t) = \sqrt 2 \int_{-\infty}^t S(t-s)\,dW(s)\;,
\end{equ}
and similarly for $\psi^\eps$.

\begin{remark}
Note that $u_0$ is \textit{not} the initial condition for either \eref{e:maineq} or \eref{e:epseq}, since $\psi^0$ and $\psi^\eps$
are \textit{stationary} solutions to the corresponding linear evolution equations. Therefore, one has
$u(0) = u_0 + \psi^0(0)$ and $u_\eps(0) = u_0 + \psi^\eps(0)$.
\end{remark}

We also denote by $H^\alpha$ with $\alpha \in \R$ the usual fractional Sobolev spaces, i.e.\ $H^\alpha$
is the domain of $(-\CL)^{\alpha/2}$, and we denote by $\|\cdot\|_\alpha$ the norm in $H^\alpha$.
It is straightforward to check that for every $\eps > 0$, $\psi^\eps$ takes values in $H^s$ for every $s < {3\over 2}$,
while $\psi^0$ only takes values in $H^s$ for $s < {1\over 2}$.
We first note that one has the following local existence and uniqueness results \cite{DaPrato-Zabczyk92,Hairer08}:

\begin{lemma}
Equation~\ref{e:maineq} admits a unique local mild solution for every $u_0 \in L^\infty$
and \eref{e:epseq} admits a unique local mild solution for every $u_0 \in H^1$.
\end{lemma}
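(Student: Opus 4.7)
The strategy for both parts is to subtract the stationary Gaussian component from the solution and apply the Banach fixed point theorem to the integral equation satisfied by the remainder on a small time interval $[0,T^*]$. Write $v = u - \psi^0$ for the first problem and $v_\eps = u_\eps - \psi^\eps$ for the second; each satisfies a mild equation with the stochastic convolution removed and with the nonlinearity evaluated at $v + \psi^0$ (resp.\ $v_\eps + \psi^\eps$). The sample path regularity of the stochastic convolution and the smoothing of the semigroup are what carry the argument.

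For \eref{e:maineq}, I would work in $X = C([0,T^*]; L^\infty)$. Almost surely $\psi^0 \in C([0,1]; H^s)$ for every $s < \hf$, and the one-dimensional Sobolev embedding then gives $\psi^0 \in C([0,1]; L^\infty)$. The effective nonlinearity $\bar f$ is of class $\CC^1$, since $f, h \in \CC^1$ and $g \in \CC^2$, so $\bar f$ is locally Lipschitz on bounded subsets of $L^\infty$. Since $S(t)$ extends to a contraction semigroup on $L^\infty$, a standard Picard iteration in $X$ with $T^*$ chosen small in terms of $\|u_0\|_\infty$ and $\sup_{t \le 1}\|\psi^0(t)\|_\infty$ yields the unique local mild solution.

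For \eref{e:epseq}, the natural space is $X_\eps = C([0,T^*]; H^1)$. Almost surely $\psi^\eps \in C([0,1]; H^s)$ for every $s < 3/2$, so $\psi^\eps$ belongs to $X_\eps$, and by assumption so does $u_0$. The one-dimensional embedding $H^1 \hookrightarrow L^\infty$ controls $f(u_\eps)$, $g(u_\eps)$, $g'(u_\eps)$ and $h(u_\eps)$ pointwise in terms of $\|u_\eps\|_1$. Interpreting $\eps g(u_\eps) \d_x^2 u_\eps$ weakly as $\eps \d_x\bigl(g(u_\eps)\d_x u_\eps\bigr) - \eps g'(u_\eps)(\d_x u_\eps)^2$, the nonlinearity $F_\eps(u_\eps)$ splits into a bounded term, a term of the form $\d_x G$ with $G = \eps g(u_\eps)\d_x u_\eps \in L^2$, and two terms in $L^1$ that are quadratic in $\d_x u_\eps$. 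For $\eps > 0$ fixed, the Fourier symbol $e^{-(\nu k^2 + \eps^2 k^4 + 1)t}$ of $S_\eps(t)$ yields the operator-norm bounds
\begin{equ}
\|S_\eps(t)\colon L^1 \to H^1\| \le C_\eps\, t^{-3/4}\;,\qquad \|S_\eps(t)\d_x\colon L^2 \to H^1\| \le C_\eps\, t^{-1/2}\;,
\end{equ}
which have integrable singularities at $t=0$. Hence the integral operator is a contraction on a small ball of $X_\eps$ for $T^*$ small, giving the unique local mild solution.

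The main (minor) technical point is the second estimate above: with only the second-order symbol $e^{-\nu k^2 t}$, the operator $S(t)\d_x\colon L^2 \to H^1$ already has a non-integrable singularity of order $t^{-1}$, and it is essential that the fourth-order regularisation $e^{-\eps^2 k^4 t}$ is available to absorb the derivative appearing in the $g$-part of the nonlinearity. The constants $C_\eps$ of course blow up as $\eps \to 0$, but this is irrelevant at this stage; the delicate uniform-in-$\eps$ estimates required for Theorem~\ref{theo:main} are precisely the subject of Sections~\ref{sec:averaging} and~\ref{sec:main}.
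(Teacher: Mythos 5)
Your overall strategy — subtract the stationary stochastic convolution, observe its path regularity, exhibit local Lipschitz continuity of the nonlinearity in a suitable pair of spaces, and run a Picard fixed point argument exploiting the $\eps$-dependent fourth-order smoothing of $S_\eps$ — is exactly the paper's argument. The only real difference is cosmetic: where the paper compresses everything into ``$F_\eps$ is locally Lipschitz from $H^1$ into $H^{-1}$'' plus the single bound $\|S_\eps(t)\|_{H^{-1}\to H^1}\le C_\eps/\sqrt{t}$, you unpack $F_\eps$ into a bounded piece, a $\d_x(L^2)$ piece, and $L^1$ pieces and apply three separate smoothing estimates; this is equivalent since all three pieces land in $H^{-1}$.

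However, there is a genuine error in the first half. You write that almost surely $\psi^0 \in C([0,1];H^s)$ for every $s<\hf$, and then invoke the one-dimensional Sobolev embedding to conclude $\psi^0 \in C([0,1];L^\infty)$. This step is false: $H^s\hookrightarrow L^\infty$ in one dimension requires $s>\hf$, and the stochastic convolution $\psi^0$ is \emph{not} in $H^s$ for any $s\ge\hf$ (Lemma~\ref{lem:boundpsi} shows its $H^{1/2}$-norm is already infinite in the $\eps\to 0$ limit). The conclusion that $\psi^0$ has continuous sample paths with values in $L^\infty$ is nonetheless correct, but it must be justified differently — e.g.\ by applying Kolmogorov's continuity criterion directly to the random field $(x,t)\mapsto\psi^0(x,t)$ to obtain almost-sure $\CC^\alpha$ space regularity for $\alpha<\hf$ (which is strictly stronger than $H^\alpha$ membership for such $\alpha$), or by the Da Prato–Zabczyk factorisation argument the paper cites. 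In short: $L^\infty$ or Hölder regularity of $\psi^0$ is available, but it does not follow from the $H^s$, $s<\hf$, regularity via Sobolev embedding; it requires a separate pointwise argument.
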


\begin{proof}
For \ref{e:maineq}, it suffices to note that $\psi^0$ has continuous sample paths with values in $L^\infty$ \cite{DaPrato-Zabczyk92}
and that $F$ is locally Lipschitz from $L^\infty$ into itself, so that a standard Picard fixed point argument applies.
Similarly, $\psi^\eps$ has continuous sample paths with values in $H^1$ and it is easy to check that 
$F_\eps$ is locally Lipschitz from $H^1$ into $H^{-1}$. Since furthermore $\|S_\eps(t)\|_{H^{-1} \to H^1} \le C_\eps / \sqrt t$,
the claim follows again from a standard fixed point argument.
\end{proof}

\begin{remark}
Note that at this stage we do not make any claim on the uniformity of bounds on the solutions to \eref{e:epseq} as $\eps \to 0$.
\end{remark}

In particular, this implies that, for every initial condition $u_0 \in L^\infty$, there exists a stopping time
$\tau^*$ such that \eref{e:maineq} has a unique mild solution for $t < \tau^*$ and such that
either $\tau^* = \infty$ or $\lim_{t \to\tau^*} \|u(t)\|_{L^\infty} = \infty$. 

\subsection{Semigroup}

In this subsection, we give a few convenient bounds on the semigroups $S$ and $S_\eps$. Recall that 
for every $\alpha \ge \beta$, every $T>0$ and every $\gamma > 0$, there exists a constant $C$ such that 
\begin{equ}[e:sg]
\bigl\|\exp\bigl(- (-\CL)^\gamma t\bigr)u\bigr\|_\alpha \le C t^{\beta - \alpha \over 2\gamma} \|u\|_\beta\;,
\end{equ}
for every $u \in H^\beta$ and every $t \le T$. This follows from standard semigroup theory and can also 
be checked directly by decomposing into Fourier modes.

Using this, we start by giving a sharp bound on the regularising properties of $S_\eps$ as a function of the parameter $\eps > 0$.
We have:
\begin{lemma}\label{lem:smoothSeps}
For every $\alpha \ge \beta$ and $T>0$, there exists $C>0$ such that the bound
\begin{equ}
\|S_\eps(t)u\|_\alpha \le C \bigl(t^{\beta-\alpha \over 2} \wedge (\eps^2 t)^{\beta - \alpha \over 4}\bigr) \|u\|_\beta\;,
\end{equ}
holds for every $\eps \le 1$ and every $t \le T$.
\end{lemma}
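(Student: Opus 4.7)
The plan is to exploit the fact that the two pieces of the generator, $\CL = \nu\d_x^2 - 1$ and $-\eps^2 \d_x^4$, are both Fourier multipliers with respect to the periodic basis $\{e_k\}$ and therefore commute as unbounded operators. This allows one to factorise
\begin{equ}
S_\eps(t) = S(t)\, T_\eps(t)\;,\qquad T_\eps(t) \eqdef \exp(-\eps^2 t\, \d_x^4)\;,
\end{equ}
and then apply two different smoothing estimates, one to each factor, depending on which one is called upon to absorb the derivatives.

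The key observation is that both $S(t)$ and $T_\eps(t)$ are contractions on every $H^s$, since their Fourier symbols $e^{-t(\nu k^2 + 1)}$ and $e^{-\eps^2 t k^4}$ take values in $[0,1]$. Applying \eref{e:sg} with $\gamma = 1$ to the factor $S(t)$ therefore yields
\begin{equ}
\|S_\eps(t) u\|_\alpha = \|S(t) T_\eps(t) u\|_\alpha \le C\, t^{(\beta-\alpha)/2}\,\|T_\eps(t)u\|_\beta \le C\, t^{(\beta-\alpha)/2}\,\|u\|_\beta\;,
\end{equ}
while applying instead a smoothing bound $\|T_\eps(t)v\|_\alpha \le C(\eps^2 t)^{(\beta-\alpha)/4}\|v\|_\beta$ together with the contractivity of $S(t)$ on $H^\beta$ produces $\|S_\eps(t)u\|_\alpha \le C(\eps^2 t)^{(\beta-\alpha)/4}\|u\|_\beta$. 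Taking the minimum of the two bounds delivers the claimed wedge.

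The one ingredient that is not a literal instance of \eref{e:sg} is this bound on $T_\eps(t)$, because $\d_x^4$ differs from $(-\CL)^2$ through the factor $\nu$ and the added constant $1$. It is however obtained by the same argument: in Fourier, the operator norm of $T_\eps(t)$ from $H^\beta$ to $H^\alpha$ is controlled by $\sup_{k}(1+k^2)^{(\alpha-\beta)/2}\,e^{-\eps^2 t k^4}$, and the rescaling $y = k (\eps^2 t)^{1/4}$ immediately produces the factor $(\eps^2 t)^{(\beta-\alpha)/4}$. The restrictions $\eps \le 1$ and $t \le T$ are used only to absorb the low-frequency contribution where $(1+k^2)$ cannot be replaced by $k^2$. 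No substantive obstacle is expected; the entire content of the lemma is the observation that the two regimes $\eps|k| \ll 1$ and $\eps|k| \gg 1$ are dominated respectively by $\nu k^2$ and $\eps^2 k^4$ in the exponent, the wedge simply recording the crossover at $\eps|k| \sim 1$.
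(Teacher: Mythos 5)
Your proof is correct and follows essentially the same approach as the paper: both arguments exploit that $\CL$ and $-\eps^2\d_x^4$ are commuting Fourier multipliers, so the two smoothing regimes (parabolic for $\eps|k|\lesssim 1$, biharmonic for $\eps|k|\gtrsim 1$) can be separated and estimated via \eref{e:sg}-type bounds. The only difference is packaging: the paper bounds the generator $\CL_\eps$ by $\CL$ and by $-K_1\eps^2\CL^2+K_2$ so that both cases of \eref{e:sg} apply verbatim, whereas you factorise $S_\eps = S\cdot T_\eps$ and supply the small extra Fourier computation for $T_\eps$, which you correctly flag and resolve.
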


\begin{proof}
Since $\CL_\eps \le \CL - K_1 \eps^2 \CL^2 + K_2$ for some constants $K_i$, we have
\begin{equ}
\|S_\eps(t)u\|_\alpha \le C \bigl(\|S(t)u\|_\alpha \wedge \|\exp(-K_1 \eps^2 \CL^2 t)u\|_\alpha\bigr)\;,
\end{equ}
so that the bound follows immediately from \eref{e:sg}.
\end{proof}

\begin{lemma}\label{lem:diffSu}
For every $\gamma < 2$, every $\beta \le \alpha + 2\gamma$, and every $T>0$, there exists a constant $C$ such that the bound
\begin{equ}[e:boundSdiff]
\|S(t)u - S_\eps(t)u\|_\alpha \le C \eps^{\gamma} t^{\beta - \alpha - \gamma \over 2} \|u\|_{\beta}
\end{equ}
holds for every $t \le T$ and every $u \in H^\beta$.
\end{lemma}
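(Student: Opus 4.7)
The natural approach is to diagonalise both semigroups in the Fourier basis $\{e_k\}_{k\in\Z}$ introduced in \eref{e:defek}. In this basis both $\CL$ and $\CL_\eps$ act by pure multiplication, with eigenvalues $-(\nu k^2 + 1)$ and $-(\nu k^2 + 1 + \eps^2 k^4)$ respectively, so the $k$-th Fourier coefficient of $\bigl(S(t)-S_\eps(t)\bigr)u$ is simply
\begin{equ}
e^{-(\nu k^2+1)t}\bigl(1-e^{-\eps^2 k^4 t}\bigr)\,u_k\;.
\end{equ}
This factorisation is the whole point: the entire $\eps$-dependence is concentrated in the single scalar factor $1-e^{-\eps^2 k^4 t}$, which is small precisely when $\eps^2 k^4 t$ is small.

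The key elementary ingredient will be the inequality $1-e^{-x} \le x^{\gamma/2}$ valid for all $x \ge 0$ and all $\gamma \in [0,2]$ (one just checks $x\le 1$ and $x>1$ separately, using $1-e^{-x}\le x$ in the first case). Applied with $x=\eps^2 k^4 t$, this produces the pointwise bound
\begin{equ}
\bigl|e^{-(\nu k^2+1)t}\bigl(1-e^{-\eps^2 k^4 t}\bigr)\bigr|\le e^{-\nu k^2 t}\,\eps^\gamma k^{2\gamma}\,t^{\gamma/2}\;,
\end{equ}
trading the desired power of $\eps$ for two extra powers of $|k|$ per unit of $\gamma$, with the Gaussian damping $e^{-\nu k^2 t}$ left over to absorb them.

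Combining this mode-by-mode estimate with the standard Fourier characterisation $\|v\|_\alpha^2 \sim \sum_k (1+k^2)^\alpha |v_k|^2$ of the Sobolev norm gives the operator-norm bound
\begin{equ}
\|S(t)-S_\eps(t)\|_{H^\beta\to H^\alpha}\le \eps^\gamma t^{\gamma/2}\sup_{k\in\Z}(1+k^2)^{{\alpha-\beta\over 2}+\gamma}\,e^{-\nu k^2 t}\;.
\end{equ}
The hypothesis $\beta\le \alpha+2\gamma$ is exactly what is needed to make the exponent $m\defeq (\alpha-\beta)/2+\gamma$ nonnegative, which in turn is the regime in which $\sup_k (1+k^2)^m e^{-\nu k^2 t}\le C t^{-m}$ holds (the scalar version of \eref{e:sg}). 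Combining the powers of $t$ yields exactly $\eps^\gamma t^{(\beta-\alpha-\gamma)/2}$, as required.

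I do not foresee any real obstacle here: the argument is a one-line Fourier computation and each step is forced. The only delicate bookkeeping is to observe that the restriction $\gamma<2$ corresponds precisely to the range in which $1-e^{-x}\le x^{\gamma/2}$ is available, and that the upper bound $\beta\le \alpha+2\gamma$ is precisely what ensures that the $k$-dependence in the supremum above remains subcritical relative to the Gaussian factor.
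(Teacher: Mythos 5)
Your proof is correct and follows essentially the same route as the paper: diagonalise in Fourier, isolate the factor $1-e^{-\eps^2 k^4 t}$, bound it by $(\eps^2 k^4 t)^{\gamma/2}$, and then absorb the extra $k^{2\gamma}$ powers into the heat-kernel damping via the standard smoothing estimate \eref{e:sg} (which you re-derive in scalar form as $\sup_k (1+k^2)^m e^{-\nu k^2 t}\le Ct^{-m}$ rather than citing it). One small imprecision: the inequality $1-e^{-x}\le x^{\gamma/2}$ actually holds on the closed range $\gamma\in[0,2]$, not only for $\gamma<2$, so the strict hypothesis $\gamma<2$ is not "precisely" forced by that step; it is just the range the lemma is stated for.
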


\begin{proof}
Denote by $u_k$ the $k$th Fourier mode of $u$, by $r_k$ the $k$th Fourier mode of $r \eqdef S(t)u - S_\eps(t)u$, and set
$s_k(t) = e^{-(\nu k^2 + 1)t}$.
We then have
\begin{equ}
|r_k| =  s_k(t) \bigl|\bigl(e^{- \eps^2 k^4 t} - 1\bigr)u_k \bigr| \le s_k(t) \bigl(k^4 \eps^2 t \wedge 1\bigr)|u_k|
\le \eps^{\gamma} k^{2\gamma} t^{\gamma \over 2} s_k(t) |u_k|\;.
\end{equ}
It follows that $\|r\|_\alpha \le C \eps^\gamma t^{\gamma \over 2}\|S(t)u\|_{\alpha + 2\gamma}$, so that the result follows again from \eref{e:sg}.
\end{proof}

\begin{remark}
Note that as operators from $H^\alpha$ to $H^\alpha$, one has for every $T>0$
\begin{equ}
\liminf_{\eps \to 0} \sup_{t \in [0,T]} \|S(t) - S_\eps(t)\|_\alpha > 0\;,
\end{equ}
as can easily be seen by looking at the operators acting on $e_k$ for suitably chosen $k$. This explains the presence of a divergence
as $t \to 0$ in \eref{e:boundSdiff} if one takes $\alpha = \beta$ and $\gamma > 0$. 
\end{remark}

\subsection{Stochastic convolution}

We first recall the following quantitative version of Kolmogorov's continuity test:

\begin{proposition}\label{prop:Kolmo}
Let $\Psi$ be a Gaussian random field on $[0,1]^d$ with values in a separable Hilbert space $\CH$
such that there exist constants $K$, and $\alpha > 0$ such that 
\begin{equ}
\E \| \Psi(x) - \Psi(y)\|^2 \le K |x-y|^\alpha \;,
\end{equ}
holds for every $x$, $y$. Then, for every $p>0$ there exist a constant $C$ such that 
\begin{equ}
\E \sup_{x \in [0,1]^d} \|\Psi(x)\|^p \le C \bigl|K + \E \|\Psi(0)\|^2\bigr|^{p\over 2}\;.
\end{equ}
\end{proposition}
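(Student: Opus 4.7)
The strategy is the classical Kolmogorov chaining argument, simplified by the fact that all $L^p$ norms of an $\CH$-valued centred Gaussian vector are equivalent to its $L^2$ norm. I would first upgrade the $L^2$ increment hypothesis to an $L^p$ bound via Fernique, then run a dyadic chaining on $[0,1]^d$ to control $\sup_x \|\Psi(x) - \Psi(0)\|$, and finally add back the contribution of $\Psi(0)$. One may reduce to the centred case at the outset, since the oscillation of the deterministic mean is controlled by $C\sqrt K$ by the hypothesis.

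By Fernique's theorem, every $\CH$-valued centred Gaussian vector $X$ satisfies $\E \|X\|^p \le C_p (\E \|X\|^2)^{p/2}$ for every $p \ge 1$. Applied to each increment $\Psi(x) - \Psi(y)$, this upgrades the hypothesis to
\[
\E \|\Psi(x) - \Psi(y)\|^p \le C_p K^{p/2} |x-y|^{\alpha p/2}\;,\qquad p \ge 1\;.
\]

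Next, I would fix $p$ large enough that $\alpha p/2 > d$. Setting $D_n = \{k 2^{-n} : k \in \{0,\ldots,2^n\}^d\}$ and letting $M_n$ be the maximum of $\|\Psi(x) - \Psi(y)\|$ over the $O(2^{nd})$ nearest-neighbour pairs in $D_n$, a union bound gives $\E M_n^p \le C K^{p/2} 2^{n(d - \alpha p/2)}$. Any dyadic point can be joined to $0$ through a telescoping sequence of nearest-neighbour increments, so $\sup_{x \in D_\infty}\|\Psi(x) - \Psi(0)\| \le C \sum_{n \ge 0} M_n$, and Minkowski's inequality combined with the geometric decay of $(\E M_n^p)^{1/p}$ yields
\[
\bigl(\E \sup_{x \in D_\infty} \|\Psi(x) - \Psi(0)\|^p\bigr)^{1/p} \le C K^{1/2}\;.
\]
The same chaining estimate furnishes a continuous modification of $\Psi$, so the dyadic supremum coincides with the supremum over $[0,1]^d$.

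To conclude, I would invoke the triangle inequality $\sup_x \|\Psi(x)\| \le \|\Psi(0)\| + \sup_x \|\Psi(x)-\Psi(0)\|$, apply Fernique once more to $\Psi(0)$, and combine using $a^{p/2} + b^{p/2} \le C_p(a+b)^{p/2}$ to obtain the claim for $p$ large. For arbitrary $p > 0$, Jensen's inequality $(\E Y^p)^{1/p} \le (\E Y^{p_0})^{1/p_0}$ for nonnegative $Y$ and $p \le p_0$ extends the bound to every $p > 0$. The main obstacle is not conceptual but bookkeeping in the chaining step, ensuring that the constants depend only on $p$, $d$ and $\alpha$ and that the final estimate has exactly the linear form $K + \E\|\Psi(0)\|^2$ to the power $p/2$. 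The Gaussian hypothesis is essential in precisely one place, namely the uniform-in-$p$ moment equivalence; without it one would have to assume an $L^p$ moment condition on the increments from the start.
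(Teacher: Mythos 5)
Your argument is correct and coincides with the first route the paper indicates: it simply spells out the standard dyadic chaining proof of Kolmogorov's criterion while tracking constants, using the Gaussian moment equivalence for increments and Jensen to cover small $p$. The paper gives this as a one-line remark and also notes that the bound follows directly from the Fernique--Talagrand theory of suprema of Gaussian processes, which you could have cited as a shortcut, but your explicit chaining is a faithful expansion of the same idea.
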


\begin{proof}
It suffices to keep track of constants in the standard proof of Kolmogorov's criterion as in \cite{MR1083357}.
It also follows immediately from the Fernique-Talagrand result on the supremum of Gaussian processes \cite{MR0413237,MR906527}.
\end{proof}

Denoting by $\psi^\eps_k(t)$ the scalar product $\scal{\psi^\eps(t),e_k}$, we then have
\begin{equ}
\E \bigl|\psi^\eps_k\bigr|^2(t) = {1\over 1+\nu k^2 + \eps^2 k^4}\;.
\end{equ}
It follows immediately that one has the following \textit{a priori} bounds on $\psi^\eps$:

\begin{lemma}\label{lem:boundpsi}
There exist constants $C>0$ and $C_\alpha$ such that the bounds
\begin{equs}[2]
\E \|\psi^\eps(t)\|_\alpha^2 &\le C_\alpha \;,& \alpha & \in (-\infty, \hf)\;,\\
\E \|\psi^\eps(t)\|_{\alpha}^2 &\le C |\log \eps| \;,& \alpha&= \hf\;,\\
\E \|\psi^\eps(t)\|_\alpha^2 &\le C_\alpha \eps^{1-2\alpha} \;,& \quad \alpha& \in (\hf, \textstyle{3\over 2})\;,
\end{equs}
hold for every $\eps \in (0,1]$. Furthermore, for every $\delta > 0$ and every $T>0$, there exist constants $\tilde C_\alpha$ such that we have the bounds
\begin{equs}[2]
\E \sup_{t \in [0,T]}\|\psi^\eps(t)\|_\alpha^2 &\le \tilde C_\alpha \;,& \alpha & \in (-\infty, \hf)\;,\\
\E \sup_{t \in [0,T]}\|\psi^\eps(t)\|_\alpha^2 &\le \tilde C_\alpha \eps^{1-2\alpha - 2\delta} \;,& \quad \alpha& \in [\hf, \textstyle{3\over 2})\;.
\end{equs}
\end{lemma}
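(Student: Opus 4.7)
The plan is to work in the Fourier basis $\{e_k\}_{k\in\Z}$, in which $\CL_\eps$ is diagonal with eigenvalues $-\lambda_k \eqdef -(1+\nu k^2+\eps^2 k^4)$. Each Fourier component $\psi_k^\eps(t) = \langle \psi^\eps(t),e_k\rangle$ is then an independent complex Ornstein--Uhlenbeck process of rate $\lambda_k$ with stationary variance $\E|\psi^\eps_k(t)|^2 = 1/\lambda_k$, as already noted immediately before the lemma. Parseval's identity then gives
\begin{equ}
\E \|\psi^\eps(t)\|_\alpha^2 = \sum_{k\in\Z} \frac{(1+\nu k^2)^\alpha}{1+\nu k^2 + \eps^2 k^4}\;.
\end{equ}

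The single-time bounds follow from splitting this sum at $|k|\sim 1/\eps$. In the low range $|k|\le 1/\eps$, $\lambda_k$ is comparable to $1+k^2$, so the summand is of order $(1+k^2)^{\alpha-1}$; this piece is uniformly bounded in $\eps$ for $\alpha<\hf$, grows like $|\log\eps|$ at $\alpha=\hf$, and scales like $\eps^{1-2\alpha}$ for $\alpha>\hf$. In the high range $|k|>1/\eps$, $\lambda_k$ is comparable to $\eps^2 k^4$, so the summand is of order $\eps^{-2}k^{2\alpha-4}$ and the sum is of order $\eps^{1-2\alpha}$ provided $\alpha<\textstyle{3\over 2}$. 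Adding the two contributions yields the three claimed estimates.

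For the supremum bounds I would apply Proposition~\ref{prop:Kolmo} to $t\mapsto \psi^\eps(t)$ viewed as an $H^\alpha$-valued Gaussian field on $[0,T]$. Since each $\psi_k^\eps$ is a stationary OU process one has explicitly
\begin{equ}
\E|\psi^\eps_k(t)-\psi^\eps_k(s)|^2 = \frac{2\bigl(1-e^{-\lambda_k|t-s|}\bigr)}{\lambda_k} \le C|t-s|^\theta \lambda_k^{\theta-1}\;,\qquad \theta\in(0,1)\;,
\end{equ}
using the elementary interpolation $1\wedge \lambda_k|t-s| \le (\lambda_k|t-s|)^\theta$. Summing against the weights $(1+\nu k^2)^\alpha$ and rerunning the high/low-frequency split with $\alpha$ replaced by $\alpha+\theta$ yields
\begin{equ}
\E\|\psi^\eps(t) - \psi^\eps(s)\|_\alpha^2 \le C_{\alpha,\theta}\bigl(1 \vee \eps^{1-2\alpha-2\theta}\bigr) |t-s|^\theta\;,
\end{equ}
valid whenever $\alpha+\theta<\textstyle{3\over 2}$.

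Since $\psi^\eps$ is Gaussian, all of its $L^p$ moments are equivalent to the $L^2$ one, so applying Proposition~\ref{prop:Kolmo} to $p$th moments with $p$ chosen large enough that $p\theta>1$ turns the displayed increment estimate into the desired supremum bound; the remaining term $\E\|\Psi(0)\|_\alpha^2$ appearing in the proposition is exactly controlled by the first part. For $\alpha<\hf$ one picks $\theta\in(0,\hf-\alpha)$ to make everything $\eps$-uniform, while for $\alpha\in[\hf,\textstyle{3\over 2})$ one picks $\theta=\delta$ to produce the announced $\eps^{1-2\alpha-2\delta}$ factor. The only real subtlety is the interpolation in the time-regularity bound, which is what lets one distribute the $|t-s|$-smallness and the mode-summability correctly across the two frequency regimes; the rest is direct book-keeping.
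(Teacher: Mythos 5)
Your argument follows the same route as the paper's: explicit Fourier-side computation for the single-time bounds, then Proposition~\ref{prop:Kolmo} applied to the time increments via the interpolation $1 - e^{-\lambda_k|t-s|} \le (\lambda_k|t-s|)^\theta$. One arithmetic slip worth flagging: the stated admissibility range $\alpha+\theta<\frac32$ is too generous. In the high-frequency regime $|k|\gtrsim 1/\eps$ one has $\lambda_k\sim \eps^2 k^4$, so the tail of $\sum_k k^{2\alpha}\lambda_k^{\theta-1}$ behaves like $\eps^{2\theta-2}\sum_{|k|>1/\eps}k^{2\alpha+4\theta-4}$, which converges only when $2\alpha+4\theta<3$, i.e.\ $\theta<\frac34-\frac\alpha2$; this is exactly the paper's constraint $\gamma\in(0,\frac34-\frac\alpha2)$. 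The heuristic ``replace $\alpha$ by $\alpha+\theta$'' undercounts the $k$-exponent by $2\theta$ in the high-frequency tail (even though, once the constraint holds, the resulting $\eps$-order $\eps^{1-2\alpha-2\theta}$ does come out the same). The lemma's conclusion is unaffected since only small $\theta$ is ever needed, and $\eps^{1-2\alpha-2\theta}\le\eps^{1-2\alpha-2\delta}$ for $\theta\le\delta$, $\eps\le 1$. Also, the digression about choosing $p$ with $p\theta>1$ is unnecessary: Proposition~\ref{prop:Kolmo} as stated already produces supremum moments of all orders from a second-moment increment bound with any positive H\"older exponent.
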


\begin{proof}
The first set of bounds follows from a straightforward explicit calculation. The second set of bounds follows similarly by 
applying Proposition~\ref{prop:Kolmo}. For example, for $\alpha \in [\hf, {3\over 2})$ and $\gamma \in (0,{3\over 4}-{\alpha \over 2})$, one obtains the bound
\begin{equs}
\E \|\psi^\eps(t) - \psi^\eps(s)\|_\alpha^2 &\le \sum_{k\in \Z} {k^{2\alpha} \over 1+\nu k^2 + \eps^2 k^4} \bigl(1 \wedge |t-s| (\nu k^2 + \eps^2 k^4)\bigr) \\
&\le |t-s|^\gamma \sum_{k\in \Z} {k^{2\alpha} \over \bigl(1+\nu k^2 + \eps^2 k^4\bigr)^{1-\gamma}} \\
&\approx \eps^{1-2\alpha - 2\gamma} |t-s|^\gamma \int_{-\infty}^\infty {x^{2(\alpha + \gamma - 1)} \over (\nu + x^2)^{1-\gamma}}\,dx\;,
\end{equs}
which implies the required bound by Proposition~\ref{prop:Kolmo}. The other bounds follow similarly.
\end{proof}

Note that the function $\alpha \mapsto C_\alpha$ is bounded on any closed interval not including $\hf$ or ${3\over 2}$, but it 
diverges at these two values. Similarly, the function $\tilde C_\alpha$ diverges as it approaches $\hf$ from below.
We now give a bound on the speed at which $\psi^\eps$ approaches $\psi^0$ as $\eps \to 0$.

\begin{proposition}\label{prop:boundGauss}
For every $\kappa > 0$ and every $T>0$, there exists a constant $C$ such that
\begin{equ}
\E \sup_{t\in [0,T]}\|\psi^\eps(t) - \psi^0(t)\|_{L^\infty} \le C \eps^{{1\over 2}- \kappa}\;,
\end{equ}
for every $\eps < 1$.
\end{proposition}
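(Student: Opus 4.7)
My plan is to apply Proposition~\ref{prop:Kolmo} directly to $r^\eps := \psi^\eps - \psi^0$, viewed as an $\R$-valued Gaussian random field on $[0,T]\times[0,2\pi]$. Since $\psi^\eps$ and $\psi^0$ are both stationary and driven by the \emph{same} cylindrical Wiener process $W$, the Fourier modes of their difference are
\begin{equ}
r^\eps_k(t) = \sqrt 2 \int_{-\infty}^t \bigl(e^{-b_k(t-s)} - e^{-a_k(t-s)}\bigr)\,dW_k(s)\;,
\end{equ}
with $a_k = 1+\nu k^2$ and $b_k = a_k + \eps^2 k^4$. A direct computation gives
\begin{equ}
\E|r^\eps_k(t)|^2 = \frac{\eps^4 k^8}{a_k\, b_k\,(a_k+b_k)}\;,
\end{equ}
which is of order $\eps^4 k^2$ for $|k| \ll 1/\eps$ and of order $1/k^2$ for $|k| \gg 1/\eps$. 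Summing, the pointwise variance $\E|r^\eps(t,x)|^2 = \frac{1}{2\pi}\sum_k \E|r^\eps_k|^2$ is of order $\eps$.

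Next I would estimate the increments. For the spatial increment, the bound $|e^{ikx}-e^{iky}|^2 \le 4 \wedge k^2(x-y)^2$, combined with splitting the Fourier sum at the two scales $1/\eps$ and $1/|x-y|$, yields $\E|r^\eps(t,x) - r^\eps(t,y)|^2 \lesssim \eps \wedge |x-y|$. For the temporal increment I would use that $r^\eps_k$ satisfies the ODE
\begin{equ}
dr^\eps_k = -b_k r^\eps_k\,dt - \eps^2 k^4 \psi^0_k\,dt\;,
\end{equ}
so that integrating over $[s,t]$ and applying Cauchy--Schwarz together with stationarity gives $\E|r^\eps_k(t)-r^\eps_k(s)|^2 \lesssim (t-s)^2 \eps^4 k^8/a_k$. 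Combining this with the trivial bound $4\E|r^\eps_k|^2$ and performing an analogous two-scale splitting, I expect to obtain $\E|r^\eps(t,x) - r^\eps(s,x)|^2 \lesssim \eps \wedge (t-s)^{1/4}\eps^{1/2}$.

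Interpolating each minimum via the elementary inequality $\min(\eps,A) \le \eps^{1-\alpha}A^\alpha$, valid for $\alpha \in (0,1)$, then produces
\begin{equ}
\E|r^\eps(t,x) - r^\eps(s,y)|^2 \lesssim \eps^{1-\alpha}\bigl(|x-y|^\alpha + |t-s|^{\alpha/4}\bigr) \lesssim \eps^{1-\alpha}\bigl(|x-y|^2 + |t-s|^2\bigr)^{\alpha/8}\;,
\end{equ}
the last step using that the parameter space is bounded. Choosing $\alpha = 2\kappa$ and feeding this into Proposition~\ref{prop:Kolmo} with $\CH = \R$, $d = 2$, Hölder exponent $\kappa/4$ and constant $K = C\eps^{1-2\kappa}$ then produces $\E\sup_{t,x}|r^\eps(t,x)|^p \le C_{p,\kappa}\, \eps^{(1/2 - \kappa)p}$ for every $p \ge 1$, and the case $p=1$ is the desired estimate.

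The principal obstacle is the \emph{anisotropic} parabolic scaling: the incremental variance scales like $|x-y|^\alpha$ spatially but only like $|t-s|^{\alpha/4}$ in time, reflecting the fourth-order operator $\eps^2\d_x^4$ in the definition of $\psi^\eps$. The naive approach of bounding $\E\sup_t \|r^\eps(t)\|_\alpha$ for some $\alpha > 1/2$ via Sobolev embedding $H^\alpha \hookrightarrow L^\infty$ fails, because $r^\eps$ inherits the low regularity of $\psi^0$ in its high-frequency tail: for $|k| \gg 1/\eps$ one has $r^\eps_k \approx -\psi^0_k$, so that $r^\eps \notin H^{1/2}$ almost surely. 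One must therefore work directly with the two-dimensional Gaussian field rather than viewing it as a time path in some Sobolev space.
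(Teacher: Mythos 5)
Your proposal is correct and follows essentially the same route as the paper: decompose $\psi^\eps-\psi^0$ into Fourier modes, establish the mode variance bound of order $k^{-2}\wedge \eps^4 k^2$, derive Hölder-type moment bounds on the space-time increments of the two-dimensional Gaussian field, and conclude via Proposition~\ref{prop:Kolmo}. The only differences are cosmetic --- you compute the mode variance exactly and derive the temporal increment bound from the ODE for $r^\eps_k$, whereas the paper bounds each $\Delta^k_\eps$ increment by a three-way minimum and interpolates mode-by-mode before summing --- and your closing remark (that $r^\eps\notin H^{1/2}$ a.s., so a Sobolev-embedding shortcut is unavailable) is precisely the structural obstruction the paper's proof is designed to circumvent.
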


\begin{proof}
Define the random fields
\begin{equ}
\Delta_\eps^k(x,t) = \bigl(\psi_k^\eps(t) - \psi_k^0(t)\bigr)\,e_k(x)\;,
\end{equ}
so that
\begin{equ}
\Delta_\eps \eqdef \psi^\eps - \psi^0 = \sum_{k \in \Z} \Delta_\eps^k\;,
\end{equ}
and the fields $\Delta_\eps^k$ are all independent. Let us first obtain a bound on the second moment
of $\Delta_\eps^k(t,x)$.
For $k=0$, we have $\Delta_\eps^0 = 0$, while for  $k \neq 0$ we have the bound
\begin{equ}
\E |\psi^\eps_k(0) - \psi^0_k(0)|^2 \le \int_0^\infty e^{- 2 \nu k^2 t} \bigl(1 - e^{-\eps^2 k^4 t}\bigr)^2\,dt\;. 
\end{equ}
Noting that there exists a constant $C$ such that
\begin{equ}
 \bigl(1 - e^{-\eps^2 k^4 t}\bigr)^2 \le C \bigl(1 \wedge \eps^4 k^8 t^2\bigr)\;,
\end{equ}
we can break this integral into two parts. For the first part, we obtain
\begin{equ}
\eps^4 k^8\int_0^{\eps^{-2} k^{-4}} e^{- 2 \nu k^2 t}t^2\,dt = \eps^4 k^2 \int_0^{\eps^{-2}k^{-2}} e^{-2\nu t} t^2\,dt
\le C \bigl(\eps^4 k^2 \wedge \eps^{-2} k^{-4}\bigr)\;.
\end{equ}
For the second part, we obtain
\begin{equs}
\int_{\eps^{-2} k^{-4}}^\infty e^{- 2 \nu k^2 t}\,dt &= k^{-2}\int_{\eps^{-2} k^{-2}}^\infty e^{- 2 \nu t}\,dt
\le Ck^{-2}  \Bigl(1 \wedge \int_{\eps^{-2} k^{-2}}^\infty {dt \over t^3}\Bigr) \\
&= C \bigl(k^{-2} \wedge \eps^4 k^2\bigr)\;.
\end{equs}
Note now that if $k^{-2} < \eps^4 k^2$, then one has $\eps^{-2} k^{-4} < k^{-2}$ so that, combining 
these two bounds, we conclude that
\begin{equ}[e:bounddiffpsi]
\E |\psi^\eps_k(0) - \psi^0_k(0)|^2 \le C \bigl(k^{-2} \wedge \eps^4 k^2\bigr)\;.
\end{equ}
In particular $\E \bigl(\Delta_\eps^k(t,x)\bigr)^2 \le C \bigl(k^{-2} \wedge \eps^4 k^2\bigr)$ by stationarity and
the fact that the functions $e_k$ are uniformly bounded.

On the other hand, $\psi_k^0(t)$ is a stationary Ornstein-Uhlenbeck process with both
characteristic time and variance $1/(1+\nu k^2)$. We thus
 have 
\begin{equs}
\E \bigl(e_k(x)\psi_k^0(t) - e_k(y)\psi_k^0(s)\bigr)^2 &\le C\E |\psi_k^0(t) - \psi_k^0(s)|^2 + Ck^2 |x-y|^2 \E |\psi_k^0(s)|^2 \\
&\le C|t-s| + C |x-y|^2\;,
\end{equs}
and it can easily be checked that the same bound also holds for $\psi_k^\eps(t)$, uniformly in $\eps$.
It follows from this and \eref{e:bounddiffpsi} that
\begin{equs}
\E \bigl(\Delta_\eps^k(x,t) - \Delta_\eps^k(y,s)\bigr)^2 &\le C \bigl(k^{-2} \wedge \eps^4 k^2 \wedge (|t-s| + |x-y|^2)\bigr)\\
&\le C\eps^{1-2\kappa} k^{-1 - \kappa} (|t-s| + |x-y|^2)^{\kappa \over 4}\;.
\end{equs}
Since the $\Delta_\eps^k$ are independent for different values of $k$, we then obtain
\begin{equs}
\E \bigl(\Delta_\eps(x,t) - \Delta_\eps(y,s)\bigr)^2 &\le \sum_{k \ge 1}\E \bigl(\Delta_\eps^k(x,t) - \Delta_\eps^k(y,s)\bigr)^2 \\
&\le  C\eps^{1-2\kappa} (|t-s| + |x-y|^2)^{\kappa \over 4}\;,
\end{equs}
so that the claim follows from Proposition~\ref{prop:Kolmo}.
\end{proof}

\section{Averaging results}
\label{sec:averaging}

The aim of this section is to show that, for any sufficiently regular function $v$, one has
\begin{equ}
v \cdot \bigl(\eps \d_x \psi^\eps(t)\otimes \d_x \psi^\eps(t) - A\bigr) \to 0\;,
\end{equ}
in a weak sense as $\eps \to 0$, where $A$ is a shorthand for ${1/ (2\sqrt \nu)}$ times the identity matrix. 
In fact, if $v \in H^\alpha$ for some $\alpha > {1\over 2}$ and the convergence is measured in
a weak Sobolev space of sufficiently negative index (less than ${-{1\over 2}}$), then the convergence takes place at speed
$\CO(\sqrt \eps)$.

Let $\{w_k^\eps\}_{k \in \Z}$ be a sequence of centred complex Gaussian random variables that are independent, except for the constraint
$w_{-k} = \bar w_k$, and depending on a parameter
$\eps > 0$ with variance
$\sigma_k = {k^2 \over 1+\nu k^2 + \eps^2 k^4}$. We also set $\hat \sigma_k = {1 \over \nu + \eps^2 k^2}$, and we note that
one has
\begin{equ}[e:diffsigma]
|\sigma_k - \hat \sigma_k| \le {C\over 1+k^2}\;.
\end{equ}
One should think of the $w_k$ as being the Fourier coefficients of one of the components of $\d_x \psi^\eps(t)$
for any fixed time $t$. We also consider $\{\tilde w_k^\eps\}_{k \in \Z}$ a sequence with the same distribution but independent
of the $w_k$'s. Think of $\tilde w$ as describing a different component from the one that $w$ describes.

Let furthermore $v_k$ be a (possibly $\eps$-dependent) sequence of complex random variables such that $v \in H^\alpha$ almost surely
for some $\alpha > {1\over 2}$. With this notation, the $n$th Fourier modes of $\phi = v(w^2- {1/ (2\eps\sqrt \nu)})$ and $\tilde \phi = vw\tilde w$ are given by
\begin{equs}
\phi_n &= {1\over 2\pi}\sum_{k+\ell + m = n } v_m w_k w_\ell - {v_n \over 2\eps\sqrt \nu} \;,\\
\tilde \phi_n &= {1\over 2 \pi} \sum_{k+\ell + m = n} v_m w_k \tilde w_\ell\;.
\end{equs}
The aim of this section is to give sharp bounds on these quantities.
Our main result in this setting is the following

\begin{proposition}\label{prop:varbit}
Let $v$ be a random variable such that $\|v\|_\alpha < \infty$ almost surely for some fixed $\alpha > {1\over 2}$. Then, for every $\gamma > {1\over 2}$, there exist
 constants $C$, $c$ and $\delta$ such that the bound
\begin{equ}[e:boundphi]
\P (\|\phi\|_{-\gamma} \ge K) \le \sum_{n \ge 1} \P \bigl(\|v\|_\alpha \ge Vn^\delta\bigr) + C \exp\Bigl(-c{\sqrt \eps K \over V}\Bigr)\;,
\end{equ}
hold for every $K>0$ and every $V>0$, and similarly for $\tilde \phi$.
\end{proposition}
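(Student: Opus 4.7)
The plan is to exploit Gaussian concentration for the second-order Wiener chaos generated by the $w_k$'s, conditionally on $v$, after subtracting its conditional mean. First I would decompose $\phi_n = M_n + R_n$, where
\[
R_n = v_n \Bigl({1\over 2\pi}\sum_{k\in \Z}\sigma_k - {1\over 2\eps\sqrt\nu}\Bigr)
\]
is (up to an irrelevant combinatorial prefactor) the conditional expectation of $\phi_n$ given $v$, coming from the Wick pairing $k+\ell=0$, and $M_n$ is the centred remainder. Using \eref{e:diffsigma} together with the Riemann-sum approximation $\eps\sum_k (\nu+(\eps k)^2)^{-1} = \pi/\sqrt\nu + O(\eps)$, the scalar prefactor in $R_n$ is bounded uniformly in $\eps$, so $\|R\|_{-\gamma} \le C\|v\|_\alpha$ and this piece can be absorbed into the target threshold $K$.

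The next step is to bound $\E[\|M\|_{-\gamma}^2\mid v]$. Wick's formula for the complex Gaussians $w_k$, keeping only the off-diagonal pairings, yields
\[
\E[|M_n|^2\mid v] \le {C \over (2\pi)^2}\sum_{m\in\Z} |v_m|^2 \,(\sigma\ast \sigma)_{n-m}\;,\quad (\sigma\ast \sigma)_j \eqdef \sum_{k\in\Z}\sigma_k\sigma_{j-k}\;.
\]
The same Riemann-sum estimate used above gives $\|\sigma\ast\sigma\|_{\ell^\infty}\le C/\eps$, while $\gamma > {1\over 2}$ makes the kernel $(1+n^2)^{-\gamma}$ lie in $\ell^1(\Z)$. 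Combining the two via Young's inequality, together with the trivial bound $\|v\|_0 \le \|v\|_\alpha$ (valid since $\alpha>0$), one arrives at
\[
\E[\|M\|_{-\gamma}^2\mid v] \le C\eps^{-1}\|v\|_\alpha^2\;.
\]

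Conditional on $v$, $M$ is a homogeneous Wiener chaos of order two with values in the Hilbert space $H^{-\gamma}$, so Borell's inequality (or, equivalently, Nelson's hypercontractivity combined with Chebyshev) supplies the sub-exponential tail
\[
\P\bigl(\|M\|_{-\gamma} \ge t\mid v\bigr) \le C\exp\bigl(-c\,t\sqrt\eps/\|v\|_\alpha\bigr)\;,
\]
as soon as $t$ exceeds a constant multiple of $\|v\|_\alpha/\sqrt\eps$. A layer-cake decomposition of the distribution of $v$ then closes the argument: on the event $\{\|v\|_\alpha < V\}$ we have $\|\phi\|_{-\gamma} \le \|M\|_{-\gamma} + CV$, so the displayed tail yields the second term in \eref{e:boundphi} once $K$ exceeds a multiple of $V$ (otherwise \eref{e:boundphi} is trivial); on the complementary event one uses the crude estimate $\P(\|v\|_\alpha\ge V) \le \sum_{n\ge 1}\P(\|v\|_\alpha\ge Vn^\delta)$, which is valid for every $\delta>0$ already from the $n=1$ term and provides welcome slack for later applications where $V$ is chosen $K$-dependent. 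The companion bound for $\tilde\phi$ is strictly easier: since $w$ and $\tilde w$ are independent, $\E[\tilde\phi_n\mid v]=0$ automatically, so no residual $R_n$ needs subtracting while the variance bound is unchanged.

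The main technical obstacle is the sharp conditional variance $\E[\|M\|_{-\gamma}^2\mid v]\le C\eps^{-1}\|v\|_\alpha^2$: it rests on the uniform estimate $\|\sigma\ast\sigma\|_{\ell^\infty}\le C/\eps$, which in turn requires a careful Riemann-sum analysis of $\sum_k \hat\sigma_k$ and its convolution square. The $\sqrt\eps$ appearing in the exponent of \eref{e:boundphi} is then exactly the square root of this factor and reflects the second-order (rather than linear Gaussian) nature of $\phi$.
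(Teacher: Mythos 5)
The core difficulty with your argument is the conditioning on $v$, which is only justified when $v$ is independent of the Gaussian family $\{w_k\}$. The proposition makes no such assumption, and in fact the remark following Corollary~\ref{cor:boundGauss} explicitly notes that ``we did not make any assumption regarding the correlations between the $v_n$ and the $w_n$''. In the main application (Proposition~\ref{prop:diff3}), $v$ plays the role of $\bar h(u_\eps)$ while the $w_k$ are Fourier modes of $\d_x\psi^\eps$, and these are genuinely correlated since $u_\eps$ contains $\psi^\eps$. Your conditional expectation $R_n = v_n\bigl({1\over 2\pi}\sum_k\sigma_k - {1\over 2\eps\sqrt\nu}\bigr)$ and the conditional variance bound $\E[\|M\|_{-\gamma}^2\mid v] \le C\eps^{-1}\|v\|_\alpha^2$ are both computed as if, conditionally on $v$, the $w_k$ were still independent centred Gaussians with variances $\sigma_k$; that fails once there is dependence, so the chaos tail $\P(\|M\|_{-\gamma}\ge t\mid v)\le C\exp(-ct\sqrt\eps/\|v\|_\alpha)$ has no justification. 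The proof in the paper avoids the issue by a purely \emph{deterministic} factorisation: writing $\phi_n=\scal{v,f^{(n)}}$ with $f^{(n)}$ depending only on $w$, so that $|\phi_n|\le\|v\|_\alpha\|f^{(n)}\|_{-\alpha}$ pathwise, and then the event $\{\|\phi\|_{-\gamma}\ge K\}$ is contained in a union of events depending separately on $v$ and on $w$. Gaussian concentration is then applied to the $w$-only quantities $f^{(n)}$ (via the $f_n=(\,\cdot\,)^{1/4}$ trick that makes them Lipschitz), and no conditioning whatsoever is needed.

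Your proposal could be salvaged along the paper's lines: your centred quantity $M_n$ also factorises as $\scal{v,\tilde f^{(n)}}$ with $\tilde f^{(n)}$ a function of $w$ alone, so the pathwise bound $\|M\|_{-\gamma}\le C\|v\|_\alpha\sup_n n^{-\kappa}\|\tilde f^{(n)}\|_{-\alpha}$ together with a concentration estimate for $\tilde f^{(n)}$ (either hypercontractivity for the quadratic chaos, or the Lipschitz device used in the paper) would close the argument without any independence hypothesis. But as written, the proof rests on a conditional law of $w$ given $v$ that the hypotheses do not control, and this is exactly the place where the statement would otherwise fail.

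A minor remark: ``Borell's inequality'' for Lipschitz functions and hypercontractivity for second-order chaos are distinct tools, not equivalent ones; the paper deliberately uses the former (after the fourth-root modification that makes the relevant function Lipschitz with constant $\CO(\eps^{-1/4})$), whereas your plan appeals to the latter. Both routes give the same $\exp(-c\sqrt\eps K/V)$ tail, so this is a matter of taste, not a gap.
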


The main ingredient in the proof of Proposition~\ref{prop:varbit} is the Gaussian
concentration inequality for Lipschitz continuous functions \cite{SudTsi,Borell,Talagrand}, which 
we state here in the following form:

\begin{proposition}\label{prop:GaussConc}
Let $G\colon \R^N \to \R$ be Lipschitz continuous with Lipschitz constant $L$
and let $X$ be a normal $\R^N$-valued Gaussian random variable. Then, there exists $c> 0$
 independent of $N$ such that the bound
\begin{equ}[e:boundconc]
\P \bigl(\bigl|G(X) - \E G(X)\bigr| \ge K\bigr) \le \exp\Bigl(- c {K^2 \over L^2}\Bigr)\;,
\end{equ}
holds for every $K>0$.
\end{proposition}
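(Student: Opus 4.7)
The plan is to prove the bound via Herbst's exponential moment technique, which derives Gaussian concentration from Gross's logarithmic Sobolev inequality and, crucially, yields a constant $c$ that is independent of the dimension $N$. Throughout I assume $X$ is standard Gaussian on $\R^N$ (the general case reduces to this by composing with the square root of the covariance, which is absorbed into $L$).

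First I would reduce to the case where $G$ is smooth with $\|\nabla G\|_\infty \le L$. This is achieved by mollifying $G$ against a narrow Gaussian kernel $\varphi_\delta$: the function $G_\delta := G \ast \varphi_\delta$ is $C^\infty$, still $L$-Lipschitz, satisfies $\|\nabla G_\delta\|_\infty \le L$, and converges pointwise and in $L^1(\gamma_N)$ to $G$ as $\delta \to 0$, where $\gamma_N$ denotes the standard Gaussian law on $\R^N$. Proving \eref{e:boundconc} for each $G_\delta$ and passing to the limit (using Fatou on both sides and continuity of the Gaussian distribution) gives the result for $G$.

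Next I would invoke Gross's Gaussian logarithmic Sobolev inequality: for every $f \in H^1(\gamma_N)$,
\[
\E\bigl[f^2 \log f^2\bigr] - \E[f^2]\log \E[f^2] \le 2\, \E\bigl[\|\nabla f\|^2\bigr]\,,
\]
where expectations are taken under $X \sim \gamma_N$. The crucial point is that the constant $2$ on the right-hand side is independent of $N$ (by tensorisation from the one-dimensional case). Applying this inequality with $f = \exp(\lambda G_\delta / 2)$ and setting $F(\lambda) := \E \exp(\lambda G_\delta(X))$, one computes $\|\nabla f\|^2 = (\lambda^2/4)\|\nabla G_\delta\|^2 f^2 \le (\lambda^2 L^2/4)f^2$, and the inequality rearranges to
\[
\lambda F'(\lambda) - F(\lambda)\log F(\lambda) \le \tfrac{1}{2}\lambda^2 L^2\, F(\lambda)\,.
\]
Dividing by $\lambda^2 F(\lambda)$ recognises the left-hand side as $\frac{d}{d\lambda}\bigl(\lambda^{-1}\log F(\lambda)\bigr)$; integrating from $0^+$ to $\lambda > 0$, using $\lim_{\lambda \to 0^+}\lambda^{-1}\log F(\lambda) = \E[G_\delta]$ (by L'Hospital), yields the sub-Gaussian moment bound
\[
\E \exp\bigl(\lambda(G_\delta - \E G_\delta)\bigr) \le \exp\bigl(\tfrac{1}{2}\lambda^2 L^2\bigr)\,,
\]
valid for all $\lambda \in \R$ after repeating the argument with $-G_\delta$. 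Finally, Markov's inequality gives $\P(G_\delta - \E G_\delta \ge K) \le \exp(-\lambda K + \tfrac{1}{2}\lambda^2 L^2)$; optimising at $\lambda = K/L^2$ yields $\exp(-K^2/(2L^2))$, and a union bound with the lower tail together with the limit $\delta \to 0$ produces \eref{e:boundconc} with $c = 1/2$.

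The main obstacle is importing the Gaussian log-Sobolev inequality itself: its proof (via hypercontractivity of the Ornstein--Uhlenbeck semigroup, or directly through a two-point Bernoulli inequality followed by tensorisation and a central limit argument) is nontrivial but classical, and it is precisely what encodes the dimension-free nature of the bound. Once that inequality is at hand, the remainder of the argument---the differentiation--integration in $\lambda$, the L'Hospital evaluation at $\lambda = 0^+$, and the Chernoff optimisation---is elementary; minor bookkeeping is needed to justify differentiating $F$ under the integral (standard, given that $G_\delta$ is Lipschitz and $X$ has finite exponential moments of all orders).
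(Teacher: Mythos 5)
Your argument is correct in substance, but it takes a genuinely different route from the paper: the paper does not prove Proposition~\ref{prop:GaussConc} at all, it simply states it and cites the classical references (Sudakov--Tsirelson, Borell, Ledoux--Talagrand), whereas you supply a self-contained derivation via Herbst's argument from the Gaussian logarithmic Sobolev inequality. That derivation is sound: the mollification step, the computation turning the LSI applied to $f = e^{\lambda G_\delta/2}$ into the differential inequality for $\lambda^{-1}\log F(\lambda)$, the evaluation of the limit at $\lambda = 0^+$, and the Chernoff optimisation are all standard and correct, and the dimension-free constant comes, as you say, from tensorisation of the one-dimensional LSI. What your approach buys is a proof rather than a citation; what it costs is having to import the log-Sobolev inequality itself, which is comparable in depth to the isoperimetric proofs in the cited works. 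One small inaccuracy: the two-sided bound you obtain is $2\exp(-K^2/(2L^2))$, and the factor $2$ cannot simply be dropped to claim $c = 1/2$ in \eref{e:boundconc} as literally stated, since for $K$ of order $L$ the right-hand side without a prefactor can fall below the actual probability. Either state the conclusion as $C\exp(-cK^2/L^2)$ (which is exactly how the estimate is used downstream, e.g.\ in \eref{e:boundphi1} and Corollary~\ref{cor:conc}), or take $c$ strictly smaller than $1/2$ and check the regime $K \lesssim L$ separately. Your parenthetical reduction for non-identity covariance is consistent with Remark~\ref{rem:conc}, where the bound on the eigenvalues of the covariance is absorbed into the constant.
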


\begin{remark}\label{rem:conc}
The bound \eref{e:boundconc} also holds if $X$ is centred Gaussian with a covariance matrix different from the identity,
provided that the eigenvalues of the covariance matrix are bounded by some constant independent of $N$. In this case,
the optimal constant $c$ depends on that bound.
\end{remark}

Another useful tool is the following little calculation: 

\begin{lemma}\label{lem:sumexp}
For every $\delta > 0$ and every $\alpha < 1$, there exists a constant $C > 0$ such that the bound
\begin{equ}
1\wedge \sum_{n \ge 1} \exp(-K n^\delta) \le C \exp(-\alpha K)\;,
\end{equ}
holds for every $K>0$.
\end{lemma}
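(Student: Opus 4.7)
The plan is to split on the size of $K$. For $K \le 1$, the $1 \wedge$ truncation on the left-hand side bounds it by $1$, and since $e^{-\alpha K} \ge e^{-\alpha}$ on this range, the inequality follows with constant $e^\alpha$. So the content of the lemma lies in the regime $K \ge 1$, where I would show that the first term $e^{-K}$ of the series dominates up to a multiplicative constant, and then use the slack $\alpha < 1$ to finish.

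For $K \ge 1$, I would factor out the leading term and write
\begin{equ}
\sum_{n \ge 1} \exp(-K n^\delta) = e^{-K}\Bigl(1 + \sum_{n \ge 2} e^{-K(n^\delta - 1)}\Bigr) \le e^{-K}\Bigl(1 + \sum_{n \ge 2} e^{-(n^\delta - 1)}\Bigr),
\end{equ}
where the second inequality uses $K \ge 1$ together with the fact that $n^\delta - 1 > 0$ for $n \ge 2$ (this is where $\delta > 0$ enters). The residual series $\sum_{n \ge 2} e^{-(n^\delta - 1)}$ converges for any $\delta > 0$ to some finite constant $C_\delta$, since $n^\delta$ eventually exceeds $2 \log n$. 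Thus, on this range, the sum is at most $(1 + C_\delta)\,e^{-K}$.

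Finally, since $\alpha < 1$ and $K \ge 0$, one has $e^{-K} \le e^{-\alpha K}$, so the bound $(1 + C_\delta)\,e^{-\alpha K}$ holds in the regime $K \ge 1$. Taking $C = \max(e^\alpha, 1 + C_\delta)$ gives the result uniformly in $K > 0$. There is no substantive obstacle here; the estimate is elementary. The only mild point to be careful about is ensuring the residual series converges for \emph{all} $\delta > 0$ (which forces us to use $n^\delta - 1 > 0$ rather than a linear lower bound like $\delta(n-1)$, since the latter would also suffice but is a weaker estimate); the bound $\sum_{n \ge 2} e^{-(n^\delta - 1)} < \infty$ can be verified by a standard comparison, e.g.\ noting that for $n$ beyond some $\delta$-dependent threshold, $n^\delta \ge 2 \log n$.
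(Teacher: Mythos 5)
Your proof is correct, and it takes a genuinely different (and somewhat cleaner) route than the paper's. The paper also splits off the term $e^{-K}$ and restricts to $K \ge 1$, but then it bounds the tail of the sum by the integral $\int_1^\infty e^{-Kx^\delta}\,dx = \int_0^\infty e^{-K(1+x)^\delta}\,dx$ and invokes the pointwise inequality $(1+x)^\delta \ge \alpha + c\,x^\delta$ (valid for any $\alpha<1$ with a suitable $c>0$) to pull out a factor $e^{-\alpha K}$, leaving a Gamma-type integral that evaluates to $C K^{-1/\delta}$. This is where the slack $\alpha<1$ is used essentially. Your argument instead factors out the \emph{full} $e^{-K}$ from every summand and observes that, once $K \ge 1$, the residual sum $\sum_{n\ge 2}e^{-K(n^\delta-1)} \le \sum_{n\ge 2}e^{-(n^\delta-1)}$ is a $K$-independent convergent constant; the hypothesis $\alpha<1$ then only enters at the very end via the trivial comparison $e^{-K}\le e^{-\alpha K}$. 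In fact your argument establishes the bound with $\alpha=1$, which is slightly stronger than what the lemma states (the paper's proof does not), though this extra strength is irrelevant for the way the lemma is applied.

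One tiny point of hygiene: the justification you give for the regime $K\le 1$, namely ``$e^{-\alpha K}\ge e^{-\alpha}$'', silently assumes $\alpha\ge 0$. The lemma as stated only requires $\alpha<1$, and for $\alpha<0$ that particular inequality reverses. It does not affect your conclusion, since $1+C_\delta>1$ so your final $C=\max(e^\alpha,1+C_\delta)$ still works (indeed for $\alpha\le 0$ one has $e^{-\alpha K}\ge 1$ for all $K\ge 0$, making the $K\le 1$ case even easier). In the context of the paper $\alpha$ is always taken close to $1$ from below, so this is immaterial, but it is worth stating the $K\le 1$ case as simply $1 \le C\cdot\inf_{K\in[0,1]} e^{-\alpha K}$ to avoid any ambiguity.
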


\begin{proof}
The bound is trivial for $K<1$, so that we assume $K \ge 1$ in the sequel. Since for every $\alpha < 1$ there exists 
a constant $c$ such that $(1+x)^\delta \ge \alpha + c x^\delta$, we have
\begin{equs}
\sum_{n \ge 1} \exp(-K n^\delta) &\le e^{-K} + \int_1^\infty  \exp(-K x^\delta)\, dx
= e^{-K} + \int_0^\infty  \exp(-K (1+x)^\delta)\, dx \\
&\le e^{-K} + e^{-\alpha K}\int_0^\infty  \exp(-K c x^\delta)\, dx = e^{-K} + C K^{-{1\over \delta}} e^{-\alpha K}\;,
\end{equs}
as claimed.
\end{proof}

We have now all the tools required for the

\begin{proof}[of Proposition~\ref{prop:varbit}]
We start with the bound on $\phi$.
Note first that since $\gamma > {1\over 2}$, there exists a constant $C$ and some $\kappa > 0$ (any value less than $\gamma - {1\over 2}$ will do) such that the implication
\begin{equ}
\|\phi\|_{-\gamma} \ge K \qquad \Rightarrow\qquad \exists n\;:\; |\phi_n| \ge CKn^\kappa
\end{equ}
holds for every $K>0$. Furthermore, we see from the definition of $\phi$ that there exist elements $f^{(n)}$ such that
$\phi_n$ can be written as $\phi_n = \scal{v, f^{(n)}}$. In particular, $|\phi_n| \le \|v\|_\alpha \|f^{(n)}\|_{-\alpha}$, so that one has the implication
\begin{equ}
 |\phi_n| \ge CKn^\kappa \qquad \Rightarrow\qquad \|v\|_\alpha \ge V n^\delta\quad\text{or}\quad
 \|f^{(n)}\|_{-\alpha} \ge C{Kn^{\kappa -\delta}\over V}\;.
\end{equ}
Combining these implications, we obtain the bound
\begin{equ}[e:boundphifirst]
\P (\|\phi\|_{-\gamma} \ge K) \le \sum_{n \ge 1} \P \bigl(\|v\|_\alpha \ge Vn^\delta\bigr) 
+ \sum_{n \ge 1} \P \Bigl(\|f^{(n)}\|_{-\alpha} \ge C{Kn^{\kappa -\delta}\over V}\Bigr) \;.
\end{equ}
Since the first term on the right hand side is exactly of the desired form, it remains to obtain suitable bounds on the $f^{(n)}$.
Note at this point that if we want to have any chance to obtain the desired bound, we need to choose $\delta$ in such a way that 
$\kappa - \delta > 0$, which imposes the restriction $\delta < \gamma - {1\over 2}$.

We now break $f^{(n)}$ into a `diagonal' part and a `off-diagonal' part by writing $f^{(n)} = f^{(n,1)} + f^{(n,2)}$ with
\begin{equs}
f^{(n,1)}_m &= \Bigl({1\over 2\pi}\sum_{k \in \Z} |w_k|^2 - {1 \over 2\eps\sqrt \nu} \Bigr)\delta_{-n,m}\;, \\
f^{(n,2)}_m &= {1\over 2\pi} \sump w_{k} w_{\ell}\;.
\end{equs}
Here, the ${}^\prime$ over the second sum indicates that we omit the terms with $k+\ell=0$. 
The first term can be bounded in a straightforward manner. We have
\begin{equ}
 \|f^{(n,1)}\|_{-\alpha} \le {1 \over 2\pi |n|^{\alpha}}\; \Bigl|{T_\nu \over \eps} - \sum_{k \in \Z}|w_k|^2\Bigr|\;,
\end{equ}
where we set $T_\nu =  {\pi \over \sqrt \nu}$ as a shorthand.
At this stage, we note that $\sum_{k} \eps \hat\sigma_k$ is nothing but a Riemann sum for the integral $\int_{-\infty}^\infty {dx\over \nu+x^2}$.
Since the value of this integral is precisely equal to $T_\nu$ and since the function $1/(\nu + x^2)$ is decreasing on $[0,\infty)$ and since $\sum_k |\sigma_k - \hat \sigma_k| \le C$ by \eref{e:diffsigma}, we find
that $|T_\nu - \sum_{k} \eps \sigma_k| \le C \eps$ for some constant $C$. 
Applying Proposition~\ref{prop:GaussConc} to the function $G(w) = \sqrt{\sum_{k \in \Z}|w_k|^2}$, it follows that the bound
\begin{equ}[e:boundphi1]
\P \bigl( \|f^{(n,1)}\|_{-\alpha} \ge K\bigr) \le C \exp\bigl(- c \sqrt \eps {K n^\alpha}\bigr)\;,
\end{equ}
holds for some constants $c$ and $C$, uniformly in $K$.

We now aim for a similar bound for $f^{(n,2)}$. In order to do this, we make use of the following trick.
Setting 
\begin{equ}
f_n = \Bigl(\eps\Bigl(\sum_{k \in \Z} |w_k|^2\Bigr)^2 +  \sum_{m \in \Z} (1+|m|)^{-2\alpha} \Bigl|\sump w_{k} w_{\ell}\Bigr|^2\Bigr)^{1/4}\;,
\end{equ}
we have the almost sure bound $\|f^{(n,2)}\|_{-\alpha} \le 2 f_n^2$, so that it is sufficient to be able to get bounds on the $f_n$.
Of course, this would also be true if the definition of $f_n$ didn't include the term proportional to $\eps$, but we will see shortly that 
this term is very useful in order to be able to apply Proposition~\ref{prop:GaussConc}.

We first obtain a bound on the expectation of $f_n^4$:
\begin{equ}
\E |f_n|^4 \le \eps\, C\Bigl(\E \sum_{k\in \Z}|w_k|^2\Bigr)^2 + \sum_{m \in \Z}(1+|m|)^{-2\alpha}\E \Bigl|\sump w_{k} w_{\ell}\Bigr|^2\;.
\end{equ}
Since $\sum_{k}\sigma_k = \CO(\eps^{-1})$, the first term in this sum is of order $\eps^{-1}$.
Furthermore, since the $w_k$'s are independent, only the `diagonal' terms remain when expanding the square 
under the second expectation. We thus have
\begin{equs}
\E |f_n|^4 &\le {C\over \eps}+C\sum_{m \in \Z} (1+|m|)^{-2\alpha} \Bigl(\sump \sigma_k \sigma_\ell\Bigr)  \\
&\le  {C\over \eps}+C\sum_{m \in \Z} (1+|m|)^{-2\alpha} \; \sum_{k \in \Z} \sigma_k \le {C\over \eps}\;, \label{e:meanfn}
\end{equs}
for some universal constant $C$, provided that $\alpha > {1\over 2}$. Here, we have used the fact that $\sum_{k}\sigma_k = \CO(1/\eps)$,
as already mentioned previously.
In particular, the bound \eref{e:meanfn} implies that $\E |f_n| \le C \eps^{-1/4}$ by Jensen's inequality.

Our next step is to obtain a bound on the Lipschitz constant of $f_n$ as a function of the $w$'s. Denoting by $\d_j$ 
the partial derivative with respect to $w_j$, we have 
\begin{equ}
2f_n^3 \d_j f_n = \eps w_{-j} \sum_{k\in\Z}|w_k|^2 + \sum_{m \in\Z}(1+|m|)^{-2\alpha} w_{m+n-j}\sump w_{k} w_{\ell}\;.
\end{equ}
It then follows from the Cauchy-Schwarz inequality that
\begin{equ}
2|f_n^3 \d_j f_n| \le |f_n|^2 \Bigl(\sqrt \eps |w_{-j}|  + \sqrt{ \sum_{m \in \Z}(1+|m|)^{-2\alpha} |w_{m+n-j}|^2}\Bigr)\;.
\end{equ}
Setting $\|Df_n\|^2 = \sum_{j \ge 1} |\d_j f_n|^2$ for the norm of the derivative of $f_n$, it follows that
\begin{equ}[e:boundDf]
\|Df_n\|^2 \le C|f_n|^{-2} \Bigl(\eps \sum_{\ell \in\Z} |w_\ell|^2 + \sum_{k \in\Z} |w_k|^2 \sum_{m\in \Z} (1+|m|)^{-2\alpha}\Bigr)\;.
\end{equ}
Since $\alpha > {1\over 2}$ by assumption, the last sum in this term is finite. We see now why it was useful to 
add this additional term proportional to $\eps$ in the definition of $f_n$. This term indeed now allows us to obtain the bound
\begin{equ}
\sum_{k \in\Z} |w_k|^2 \le {1\over \sqrt \eps} |f_n|^2\;,
\end{equ}
(which would not hold otherwise!), thus yielding $\|Df_n\|\le C \eps^{-1/4}$. Combining this with \eref{e:meanfn},
we conclude that the bound
\begin{equs}
\P \bigl(|f_n| \ge K\bigr)  \le \P \Bigl(|f_n - \E f_n| \ge K - C \eps^{-1/4}\Bigr) \le C\exp \Bigl(-c \sqrt\eps K^2\Bigr)\;,
\end{equs}
holds by Remark~\ref{rem:conc}, so that
\begin{equ}
\P \bigl(\|f^{(n,2)}\|_{-\alpha} \ge K\bigr)  \le C\exp \bigl(-c \sqrt\eps K\bigr)\;.
\end{equ}
Combining this bound with \eref{e:boundphi1}, we conclude that
\begin{equ}[e:boundphin]
\P \bigl(\|f^{(n)}\|_{-\alpha} \ge K\bigr) \le C\exp \bigl(-c \sqrt\eps K\bigr)\;,
\end{equ}
for possibly different constants $c$ and $C$. Inserting this into \eref{e:boundphifirst}, we obtain
\begin{equ}
\P (\|\phi\|_{-\gamma} \ge K) \le \sum_{n \ge 1} \P \bigl(\|v\|_\alpha \ge Vn^\delta\bigr) 
+ \sum_{n \ge 1}C\exp \Bigl(-c \sqrt\eps {Kn^{\kappa -\delta}\over V}\Bigr)\;,
\end{equ}
so that the desired bound follows from Lemma~\ref{lem:sumexp}.

We now turn to the bound for $\tilde \phi$, which is obtained in a very similar way. Similarly to before, we have $\tilde \phi_n = \scal{v,g^{(n)}}$
for elements $g^{(n)}$ given by
\begin{equ}
g^{(n)}_m = \sum_{k+\ell = m+n}w_k \tilde w_\ell\;.
\end{equ}
Similarly to before, we set
\begin{equ}
g_n^4 = \eps \Bigl(\sum_{m \in\Z}|w_m|^2\Bigr)^2 + \eps \Bigl(\sum_{m \in\Z}|\tilde w_m|^2\Bigr)^2 + \sum_{m \in\Z} (1+|m|)^{-2\alpha} \Bigl(\sum_{k+\ell = m+n}w_k \tilde w_\ell\Bigr)^2\;,
\end{equ}
so that the bound $\|g^{(n)}\|_{-\alpha} \le |g_n|^2$ holds almost surely.

Let us first compute the expectation of $|g_n|^4$.
Expanding the last square, we see that as previously, since $w$ and $\tilde w$ are independent, only the `diagonal' terms contribute, so that
\begin{equs}
\E |g_n|^4 &\le {C\over \eps} + \sum_{m \in\Z} (1+|m|)^{-2\alpha} \sum_{k+\ell = m+n}\sigma_k \sigma_\ell \le {C \over \eps} \;.
\end{equs}
Here, we have used the fact that $\sigma_k \le {1/\nu}$ to go from the first to the second line.

Since $g_n$ is symmetric in $w$ and $\tilde w$, it suffices to obtain a bound 
of the form $\|Dg_n\| \le C\eps^{-1/4}$ to conclude in the same way as before. We have the identity
\begin{equ}
2g_n^3 \d_j g_n = \eps w_{-j} \sum_{k\in\Z}|w_k|^2 + \sum_{m \in \Z}(1+|m|)^{-2\alpha} \tilde w_{m+n-j}\Bigl(\sum_{k+\ell = m+n}w_k \tilde w_{\ell}\Bigr)\;,
\end{equ}
from which a bound similar to \eref{e:boundDf} (only with $w$ replace by $\tilde w$ in the second term) follows. 
The rest of the argument is identical to before.
\end{proof}

We will mostly make use of the following two special cases. The first one is obvious from \eref{e:boundphi}:

\begin{corollary}\label{cor:conc}
Let  $\alpha, \gamma > {1\over 2}$ and let $v$ be a random variable such that $\|v\|_\alpha \le V$ almost surely. Then,
\begin{equ}
\P (\|\phi\|_{-\gamma} \ge K) \le C \exp\Bigl(-c{\sqrt \eps K \over V}\Bigr)\;,
\end{equ}
and similarly for $\tilde \phi$.
\end{corollary}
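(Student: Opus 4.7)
The plan is to observe that this corollary is an immediate specialisation of Proposition~\ref{prop:varbit}, so essentially no work is required beyond verifying that the first term on the right-hand side of \eref{e:boundphi} vanishes under the stronger hypothesis.

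More precisely, I would simply apply Proposition~\ref{prop:varbit} with the given $\alpha$ and $\gamma$. The choice of $\delta > 0$ is then fixed by that proposition (any positive value less than $\gamma - {1\over 2}$), and for this $\delta$ one has $V n^\delta \ge V$ for every $n \ge 1$. Since by hypothesis $\|v\|_\alpha \le V$ almost surely, this immediately yields $\P(\|v\|_\alpha \ge V n^\delta) = 0$ for each $n \ge 1$, so that the first sum on the right-hand side of \eref{e:boundphi} vanishes. What remains is exactly the stated bound $C\exp(-c\sqrt\eps K/V)$. The argument for $\tilde \phi$ is identical since Proposition~\ref{prop:varbit} asserts the same estimate for $\tilde \phi$.

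There is no real obstacle here: the entire content of the corollary is already contained in Proposition~\ref{prop:varbit}, and the deterministic bound on $\|v\|_\alpha$ is precisely what is needed to kill the first (summation) term which, in the general statement, accounts for possibly heavy tails of $\|v\|_\alpha$. In other words, all the technical work, namely the Gaussian concentration for the diagonal part $f^{(n,1)}$ and the use of the auxiliary quantity $f_n$ together with Remark~\ref{rem:conc} to handle $f^{(n,2)}$, has already been carried out in the proof of Proposition~\ref{prop:varbit}; the corollary merely records the cleanest consequence.
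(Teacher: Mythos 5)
Your argument matches the paper's, which simply describes this corollary as obvious from \eref{e:boundphi}. The only small slip is in the claim that $\P(\|v\|_\alpha \ge V n^\delta) = 0$ for every $n \ge 1$: for $n=1$ this reads $\P(\|v\|_\alpha \ge V)$, and the hypothesis $\|v\|_\alpha \le V$ almost surely does not exclude the event $\|v\|_\alpha = V$ from having positive probability. The fix is immediate: apply Proposition~\ref{prop:varbit} with the free parameter $V$ replaced by $2V$ (say), so that $2Vn^\delta \ge 2V > V$ for all $n\ge 1$ and the whole sum genuinely vanishes; the resulting bound $C\exp(-c\sqrt\eps K/(2V))$ is of the stated form with $c$ replaced by $c/2$, which is harmless since $c$ is a free constant. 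Alternatively, one may apply the proposition with any $V'>V$ and let $V'\downarrow V$ by continuity.
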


The second corollary does require a short calculation:

\begin{corollary}\label{cor:boundGauss}
In the setting of Proposition~\ref{prop:varbit}, let $\tilde v_n$ be a collection of independent centred Gaussian random variables
such that $\E \tilde v_n^2 = {\sigma_n \over n^2}$ and assume that $v$ is a random variable such that $\|v\|_\alpha \le 1+\|\tilde v\|_\alpha$
almost surely. Then,
\begin{equ}[e:boundvGauss]
\P (\|\phi\|_{-\gamma} \ge K) \le C \exp\Bigl(-c \eps^{2\alpha/3} K^{2/3}\Bigr)\;,
\end{equ}
and similarly for $\tilde \phi$.
\end{corollary}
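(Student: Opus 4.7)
The plan is to apply Proposition~\ref{prop:varbit} and to control the sum $\sum_n \P(\|v\|_\alpha \ge V n^\delta)$ by Gaussian concentration applied to $\|\tilde v\|_\alpha$, with $V = V(\eps, K)$ then chosen so as to balance the two resulting terms in \eref{e:boundphi}.

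To implement this, I first estimate $M_0 := \sqrt{\E\|\tilde v\|_\alpha^2}$ by a short Riemann-sum calculation analogous to Lemma~\ref{lem:boundpsi}, obtaining $M_0 \le C \eps^{(1-2\alpha)/2}$ for $\alpha \in ({1\over 2}, {3\over 2})$. Writing $\tilde v_n = |n|^{-1}\sqrt{\sigma_n}\,X_n$ with the $X_n$'s i.i.d.\ standard complex Gaussians (subject to the usual reality constraint), the random variable $\|\tilde v\|_\alpha$ is Lipschitz in the $X_n$'s with constant $L = \bigl(\sup_n (1+\nu n^2)^\alpha \sigma_n / n^2\bigr)^{1/2}$, which is $O(1)$ in the relevant range of $\alpha$. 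Proposition~\ref{prop:GaussConc} (with Remark~\ref{rem:conc}) combined with $\|v\|_\alpha \le 1 + \|\tilde v\|_\alpha$ then gives $\P(\|v\|_\alpha \ge M) \le C\exp(-c(M - 1 - M_0)^2)$ for $M \ge 1+M_0$. Imposing the constraint $V \ge 2(1+M_0)$ ensures $V n^\delta - 1 - M_0 \ge V n^\delta/2$ for all $n \ge 1$, and Lemma~\ref{lem:sumexp} then yields $\sum_{n\ge 1}\P(\|v\|_\alpha \ge V n^\delta) \le C \exp(-c V^2)$. Combining with Proposition~\ref{prop:varbit} produces
\begin{equ}[e:planbound]
\P (\|\phi\|_{-\gamma} \ge K) \le C \exp(-c V^2) + C \exp\bigl(-c\sqrt \eps\, K / V\bigr)\;.
\end{equ}

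It remains to optimise over $V$. The unconstrained optimum $V = \eps^{1/6}K^{1/3}$ gives both exponents of order $\eps^{1/3}K^{2/3}$, which is strictly sharper than the claim (since $\alpha > {1\over 2}$); this choice is admissible provided $K \gtrsim \eps^{1-3\alpha}$. In the complementary regime $K \lesssim \eps^{1-3\alpha}$, I take instead $V = 2(1+M_0) \sim \eps^{(1-2\alpha)/2}$: the first term in \eref{e:planbound} becomes $\exp(-c\eps^{1-2\alpha})$, which is negligibly small for $\alpha > {1\over 2}$, while the second becomes $\exp(-c\eps^\alpha K)$. An elementary comparison shows that $\eps^\alpha K \ge \eps^{2\alpha/3}K^{2/3}$ precisely when $K \ge c\eps^{-\alpha}$, so that the target \eref{e:boundvGauss} holds for $K$ in this range; for the smaller values $K \le c\eps^{-\alpha}$ the right hand side of \eref{e:boundvGauss} is bounded below by a positive constant and therefore holds trivially after enlarging $C$. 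The argument for $\tilde\phi$ goes through unchanged since Proposition~\ref{prop:varbit} treats both quantities in parallel. The key subtlety, and the source of the exponent $\eps^{2\alpha/3}$ rather than the na\"ive $\eps^{1/3}$, is precisely the constraint $V \ge 2(1+M_0)$ coming from the mean of $\|\tilde v\|_\alpha$: it is what forces the two-regime analysis and produces the claimed exponent in the smaller range of $K$.
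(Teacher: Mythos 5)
Your argument is correct but follows a genuinely different route from the paper. Both proofs compute $\E\|\tilde v\|_\alpha^2 = \CO(\eps^{1-2\alpha})$ and feed a Gaussian tail estimate on $\|v\|_\alpha$ into \eref{e:boundphi} and Lemma~\ref{lem:sumexp}, then optimise over $V$; they differ in the form of that tail estimate. The paper invokes Fernique's theorem in the form $\P(\|\tilde v\|_\alpha > M) \le \exp(-c\eps^{2\alpha-1}M^2)$, which scales the rate by the second moment and yields an unconstrained optimisation with the single choice $V = \eps^{{1\over 2}-{2\alpha\over 3}}K^{1\over 3}$. You instead use Borell--TIS concentration around the mean, giving the stronger estimate $\P(\|\tilde v\|_\alpha \ge M) \le C\exp(-c(M-M_0)^2)$ with $M_0 = \CO(\eps^{(1-2\alpha)/2})$, at the cost of the constraint $V \ge 2(1+M_0)$; this forces the two-regime case analysis that you carry out correctly and which even shows that the corollary's rate is not optimal in the large-$K$ regime (where $\eps^{1/3}K^{2/3}$ is available), consistent with the remark that follows the corollary in the paper. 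Two small caveats: the Lipschitz constant $\sup_n((1+\nu n^2)^\alpha\sigma_n/n^2)^{1/2}$ is $\CO(1)$ only for $\alpha\le 1$ --- for $\alpha\in(1,{3\over 2})$ it grows like $\eps^{1-\alpha}$ near $n\sim\eps^{-1}$ --- so your version is slightly more restrictive in $\alpha$ than the paper's, though harmless since the application only needs $\alpha$ close to ${1\over 2}$; and Borell--TIS centres at $\E\|\tilde v\|_\alpha$ rather than at $M_0$, which is fine by Jensen's inequality as you implicitly use.
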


\begin{proof}
A straightforward calculation shows that 
\begin{equ}
\E \|\tilde v\|_\alpha^2 \le \sum_{n \ge 1} {1 \over n^{2-2\alpha}(\nu + \eps^2 n^2)} = \CO (\eps^{1-2\alpha})\;,
\end{equ}
so that by Fernique's theorem there exists a constant $c$ such that 
\begin{equ}
\P \bigl(\|\tilde v\|_{\alpha} > K\bigr) \le \exp \bigl(- c \eps^{2\alpha-1} K^2\bigr)\;,
\end{equ}
and similarly for $v$.
It therefore follows from \eref{e:boundphi} and Lemma~\ref{lem:sumexp} that
\begin{equ}
\P (\|\phi\|_{-\gamma} \ge K) \le  C\exp \bigl(- c \eps^{2\alpha-1} V^2\bigr) + C\exp \Bigl(- c \sqrt \eps {K\over V}\Bigr)\;.
\end{equ}
Setting $V = \eps^{{1\over 2} - {2\alpha \over 3}} K^{1\over 3}$ completes the proof.
\end{proof}

\begin{remark}
Here we did not make any assumption regarding the correlations between the $v_n$ and the $w_n$. In the special cases
when either $v_n = {w_n \over n}$ or $v$ is independent of $w$ and $\tilde w$, it is possible to check by an explicit calculation
that the boundary $\alpha = {1\over 2}$ can be reached in \eref{e:boundvGauss}. Since this is of no particular use to us, there
is no need to make this additional effort.
\end{remark}

\section{Proof of the main result}
\label{sec:main}

This section is devoted to the proofs of Theorems~\ref{theo:main} and \ref{theo:convv}.
We first reformulate Theorem~\ref{theo:main} in a more precise way:
\begin{theorem}\label{theo:mainrigor}
Let $u_0$ be an $H^{3\over 2}$-valued random variable. Then, for every $\kappa > 0$ and every $T>0$, there exists a sequence of stopping times $\tau_\eps$ with $\tau_\eps \to T \wedge \tau^*$ in probability,
so that
\begin{equ}
\lim_{\eps \to 0} \P \Bigl(\sup_{t\in [0,\tau_\eps]} \|u(t) - u_\eps(t)\|_{L^\infty} > \eps^{{1\over 2}-\kappa}\Bigr) = 0\;.
\end{equ}
\end{theorem}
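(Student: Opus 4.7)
The plan is to decompose $u_\eps - u = (v_\eps - v) + (\psi^\eps - \psi^0)$, where $v_\eps(t) = S_\eps(t)u_0 + \int_0^t S_\eps(t-s)F_\eps(u_\eps(s))\,ds$ and $v$ is defined analogously for the limit equation. Proposition~\ref{prop:boundGauss} already gives $\|\psi^\eps - \psi^0\|_{L^\infty} = \CO(\eps^{1/2-\kappa})$ and Lemma~\ref{lem:diffSu} supplies a bound of the same order for $(S(t) - S_\eps(t))u_0$, so it remains to control
\begin{equs}
v_\eps(t) - v(t) &= \int_0^t S_\eps(t-s)\bigl[F_\eps(u_\eps) - F(u_\eps)\bigr]\,ds + \int_0^t \bigl[S_\eps(t-s) - S(t-s)\bigr]F(u_\eps)\,ds \\
&\quad {}+ \int_0^t S(t-s)\bigl[F(u_\eps) - F(u)\bigr]\,ds\,.
\end{equs}
The last integral furnishes the Gronwall loop via the local Lipschitz property of $\bar f$, the middle one is handled by Lemma~\ref{lem:diffSu}, and the first is the real content: I must show that after convolution with $S_\eps$, the quantity $\eps g(u_\eps)\d_x^2 u_\eps + \eps h(u_\eps)(\d_x u_\eps \otimes \d_x u_\eps)$ is well approximated by $\frac{1}{2\sqrt\nu}\Tr(h(u_\eps) - D_s g(u_\eps))$.

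To produce the approximation, I substitute $\d_x u_\eps = \d_x v_\eps + \d_x \psi^\eps$ in both quadratic terms. Each expansion yields three contributions: (i) a purely smooth piece, killed by the $\eps$ prefactor and the uniform $H^1$ bound on $v_\eps$ supplied by iterating Lemma~\ref{lem:smoothSeps} on the mild form; (ii) a mixed piece of type $\d_x v_\eps \cdot \d_x \psi^\eps$, which fits the $\tilde \phi$ template of Proposition~\ref{prop:varbit}; and (iii) the pure-noise piece, which is exactly the $\phi$ template with $v = h(u_\eps)$ and the $w_k$'s the Fourier modes of $\d_x \psi^\eps$. Corollary~\ref{cor:boundGauss} then gives concentration of this last difference in $H^{-\gamma}$ at rate $\CO(\sqrt\eps)$ for any $\gamma > 1/2$, which is transmitted to $L^\infty$ by Sobolev embedding combined with the smoothing of $S_\eps$ (Lemma~\ref{lem:smoothSeps}). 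For the $\d_x^2$ factor in the $g$-term, I first integrate by parts, $\eps g(u_\eps) \d_x^2 u_\eps = \eps \d_x(g(u_\eps)\d_x u_\eps) - \eps Dg(u_\eps)(\d_x u_\eps \otimes \d_x u_\eps)$: the full derivative is absorbed by the $(t-s)^{-1/2}$ smoothing of $S_\eps$ and contributes $\CO(\sqrt\eps)$, while the remaining quadratic is treated exactly like the $h$-term, producing $-\frac{1}{2\sqrt\nu}\Tr Dg(u_\eps)$; the symmetric structure emerges because only the symmetric part of the tensor survives the trace against $\d_x\psi^\eps \otimes \d_x\psi^\eps$, giving $D_s g$ as in \eref{e:ftilde}.

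With $\tau_\eps$ defined as the first exit time of $\|u_\eps(t)\|_{H^{3/2-\delta}}$ from a ball of some large radius $R$, a Gronwall argument on $[0,\tau_\eps]$ then closes the estimate at the desired rate $\eps^{1/2-\kappa}$ in $L^\infty$; sending $R \to \infty$ together with the a priori estimates of Lemma~\ref{lem:boundpsi} and the continuity of the limiting flow yields $\tau_\eps \to T \wedge \tau^*$ in probability. The chief obstacle is the uniform-in-$\eps$ control of $u_\eps$ in $H^\alpha$ for some $\alpha > 1/2$ needed to legitimise Proposition~\ref{prop:varbit}: this cannot be read off directly from the mild equation since $F_\eps$ involves unbounded operators acting on $u_\eps$ itself, and must be bootstrapped from the sharp regularity of $\psi^\eps$ (only logarithmically in $H^{1/2}$ by Lemma~\ref{lem:boundpsi}) using the $\eps$-sharp smoothing of $S_\eps$ of Lemma~\ref{lem:smoothSeps}, which is also what dictates the $H^{3/2}$ hypothesis on the initial data.
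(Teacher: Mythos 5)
Your decomposition into $(v_\eps-v)$ plus a linear residue and the three integral terms is the same one the paper uses, just collapsed into a single Gronwall loop instead of the paper's chain $u\to u^{(1)}_\eps\to u^{(2)}_\eps\to u_\eps$; the concentration lemmas (Proposition~\ref{prop:varbit}, Corollary~\ref{cor:boundGauss}) and the factorisation lemma are employed in the same way, and the integration by parts on the $g$-term yielding $D_sg$ is correct. Two remarks on details: the "mixed piece'' $\eps h(u_\eps)(\d_x v_\eps\otimes\d_x\psi^\eps)$ is \emph{not} the $\tilde\phi$ template of Proposition~\ref{prop:varbit} (that template handles two \emph{independent noise} components, i.e.\ the off-diagonal blocks of $\d_x\psi^\eps\otimes\d_x\psi^\eps$); the mixed term is estimated directly from the a priori $H^1$ bound on $v_\eps$ and the $\eps\|\psi^\eps\|_1 = \CO(\eps^{1/2})$ bound from Lemma~\ref{lem:boundpsi}. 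And the full-derivative piece $\eps\d_x(g(u_\eps)\d_x u_\eps)$ is not literally absorbed by $(t-s)^{-1/2}$ smoothing (which would give a non-integrable singularity from $H^{-1}$ into $H^1$); it is put into $H^{-1}$ at rate $\CO(\eps^{1/2})$ and then the factorisation Proposition~\ref{prop:factor} does the work.

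The genuine gap is the choice of stopping time. Defining $\tau_\eps$ as the first exit of $\|u_\eps(t)\|_{H^{3/2-\delta}}$ from a fixed ball of radius $R$ cannot possibly yield $\tau_\eps\to T\wedge\tau^*$: by Lemma~\ref{lem:boundpsi}, $\E\|\psi^\eps(t)\|_\alpha^2\sim\eps^{1-2\alpha}$ diverges as $\eps\to 0$ for every $\alpha>{1\over 2}$, so $\|u_\eps\|_{H^{3/2-\delta}}$ escapes any fixed ball \emph{immediately} for small $\eps$, sending $\tau_\eps\to 0$. Letting $R\to\infty$ with $\eps$ does not repair this without tracking the $K$-dependence of every Gronwall constant (which grows at least exponentially). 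The paper sidesteps the issue by defining the stopping times in terms of \emph{differences} that do have $\eps$-uniform bounds: $\|u\|_{L^\infty}$, $\|u-u^{(1)}_\eps\|_{L^\infty}$, $\|u^{(1)}_\eps-u^{(2)}_\eps\|_\alpha+\|\psi^\eps\|_{L^\infty}$, and $\|u^{(2)}_\eps-u_\eps\|_1$, each bounded by a fixed $K$. In particular the a priori $H^1$ control on $v_\eps=u_\eps-\psi^\eps$ (Lemma~\ref{lem:aprioriueps}) is extracted from the constraint $\|u^{(2)}_\eps-u_\eps\|_1\le K$ built into $\tau_3^K$, not from a bound on $u_\eps$ itself. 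This resolves the circularity you flag as the "chief obstacle'': the quantity controlled a priori is $v_\eps$ (which is genuinely $\CO(1)$ in $H^1$), never $u_\eps$ in any $H^\alpha$ with $\alpha>{1\over 2}$. As written, your definition of $\tau_\eps$ leaves the bootstrap open.
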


\begin{remark}
Assuming that $u_0$ takes values in $H^{3 \over 2}$ may look unnatural at first sight since $u(t)$
do not take values in that space. Note however that $u_0$ is \textit{not} the initial condition to \eref{e:limitmain} since
we have assumed that $\psi^0$ is a stationary realisation of the solution to the linearised problem. Therefore, the 
initial condition for the Cauchy problem \eref{e:limitmain} is given by $u_0 + \psi^0(0)$. It turns out (this follows
from standard analytic semigroup techniques) that the
solution to \eref{e:limitmain} with any continuous initial condition is of the form 
\begin{equ}
u(t) = v(t) + \psi^0(t)\;,\quad v(t) \in H^{3\over 2}\;,
\end{equ}
for any $t > 0$. (Actually, it is even true that $v(t) \in H^s$ for every $s < {5\over 2}$, but this is irrelevant for
our study.)
\end{remark}

\begin{remark}
If the limiting problem has global solutions, then one has $\tau_\eps \to T$ in probability. However, this does \textit{not} imply that the
approximating problem has global solutions! It only implies that its explosion time becomes arbitrarily large as $\eps \to 0$.
This is also why it would be unrealistic to expect approximation results in expectation rather than in probability.
\end{remark}

In order to prove Theorem~\ref{theo:mainrigor}, we consider the following two intermediary problems:
\begin{equs}
u^{(1)}_\eps(t) &= S(t)u_0 + \int_0^t S(t-s)F(u^{(1)}_\eps(s))\,ds + \psi^\eps(t)\;,\\
u^{(2)}_\eps(t) &= S_\eps(t)u_0 + \int_0^t S_\eps(t-s)F(u^{(2)}_\eps(s))\,ds + \psi^\eps(t)\;,
\end{equs}
where the notations are the same as in \eref{e:defequ}.
We are then going to bound first the $L^\infty$-norm of $u^{(1)}_\eps - u$, then the $H^\alpha$-norm
of $u^{(1)}_\eps - u^{(2)}_\eps$ for some $\alpha > {1\over 2}$, and finally the $H^1$-norm of $u^{(2)}_\eps - u_\eps$.
Combining these bounds and using the Sobolev embedding $H^\alpha \hookrightarrow L^\infty$ then shows the required claim.
We now proceed to obtain these bounds. In the sequel, one should think of the terminal time $T$, the exponent $\kappa$,
and the (possibly random) `initial condition' $u_0 \in H^{3\over 2}$ as being fixed once and for all.

We also consider a (large) cutoff value $K>0$. One should think of $K$ as being a fixed constant
in the sequel although at the end we will take $K\to \infty$ as $\eps \to 0$.

\begin{notation}
From now on, we will denote by $C_K$ a generic constant that is allowed to depend only on the
particulars of the problem at hand ($f$, $g$, $h$, etc), on the choice of 
the cutoff value $K$, and possibly on $\kappa$. It is not allowed to depend on $\eps$ or on the initial condition $u_0$.
The precise value of $C_K$ is allowed to change without warning, even from one line to the next within the same equation.
\end{notation}

We define a first stopping time $\tau^K$ by 
\begin{equ}
\tau^K = \inf \{s \le T\,:\, \|u(s)\|_{L^\infty} \ge K\}\;,
\end{equ}
with the convention that $\tau^K = T$ if the set is empty.
Note that $\tau^K \to T \wedge \tau^*$ in probability as $K \to \infty$ by the definition of $\tau^*$. 

We then set 
\begin{equ}
\tau_1^K = \tau^K \wedge \inf \{s \le T\,:\, \|u(s) - u_\eps^{(1)}(s)\|_{L^\infty} \ge K\}\;.
\end{equ}
Our first step is to show that before the time $\tau_1^K$, the quantity $\|u(s) - u_\eps^{(1)}(s)\|_{L^\infty}$ is small with very high probability:

\begin{proposition}\label{prop:diff1}
For every $K>0$, one has
\begin{equ}
\lim_{\eps \to 0} \P \Bigl(\sup_{t \le \tau_1^K} \|u(t) - u_\eps^{(1)}(t)\|_{L^\infty} > \eps^{{1\over 2}-\kappa}\Bigr) = 0\;.
\end{equ}
\end{proposition}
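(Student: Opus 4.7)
The strategy is to subtract the two mild-formulation equations and reduce the bound on $u-u_\eps^{(1)}$ to a bound on $\psi^0-\psi^\eps$, which is already controlled by Proposition~\ref{prop:boundGauss}. Subtracting the defining equations, the deterministic part cancels and we obtain
\begin{equ}
u(t) - u_\eps^{(1)}(t) = \int_0^t S(t-s)\bigl(F(u(s)) - F(u_\eps^{(1)}(s))\bigr)\,ds + \bigl(\psi^0(t)-\psi^\eps(t)\bigr)\;.
\end{equ}
Since $F(u)=1+\bar f(u)$ and $\bar f$ is of class $\CC^1$ (because $f$ and $h$ are $\CC^1$ and $g$ is $\CC^2$), the definition of $\tau_1^K$ guarantees that both $u$ and $u_\eps^{(1)}$ are bounded in $L^\infty$ by $2K$ on $[0,\tau_1^K]$, so $\bar f$ is Lipschitz on the relevant range with a constant $L_K$ depending only on $K$.

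Using the contractivity bound $\|S(t)\|_{L^\infty\to L^\infty}\le e^{-t}$ (which holds because of the $-1$ included in the definition of $\CL$) and applying the Lipschitz bound pointwise in $x$, one gets for $t\le\tau_1^K$
\begin{equ}
\|u(t) - u_\eps^{(1)}(t)\|_{L^\infty} \le L_K\int_0^t \|u(s) - u_\eps^{(1)}(s)\|_{L^\infty}\,ds + \sup_{s\le T}\|\psi^0(s)-\psi^\eps(s)\|_{L^\infty}\;.
\end{equ}
Gronwall's inequality then yields a deterministic constant $C_K$ such that
\begin{equ}
\sup_{t\le \tau_1^K}\|u(t)-u_\eps^{(1)}(t)\|_{L^\infty} \le C_K\,\sup_{t\le T}\|\psi^0(t)-\psi^\eps(t)\|_{L^\infty}\;.
\end{equ}

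At this point everything is reduced to controlling the Gaussian difference. Applying Proposition~\ref{prop:boundGauss} with $\kappa/2$ in place of $\kappa$ gives
\begin{equ}
\E \sup_{t\le T}\|\psi^\eps(t)-\psi^0(t)\|_{L^\infty} \le C\,\eps^{\frac12-\frac\kappa 2}\;,
\end{equ}
and Markov's inequality produces
\begin{equ}
\P\Bigl(\sup_{t\le\tau_1^K}\|u(t)-u_\eps^{(1)}(t)\|_{L^\infty} > \eps^{\frac12-\kappa}\Bigr) \le C_K\,\eps^{\frac\kappa 2} \xrightarrow[\eps\to 0]{} 0\;,
\end{equ}
which is the claim. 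The only mildly delicate point is making sure the Lipschitz constant of $\bar f$ is controlled uniformly on the set where both $u(s)$ and $u_\eps^{(1)}(s)$ remain bounded, which is exactly what the definition of $\tau_1^K$ ensures; there is no further technical obstacle since the nonlinear structure of \eref{e:model} (the $\eps\d_x^2u$ and $\eps(\d_xu)^{\otimes 2}$ terms) does not yet enter at this stage -- it will only appear in the next comparison between $u_\eps^{(1)}$ and $u_\eps^{(2)}$.
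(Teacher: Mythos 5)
Your proof is correct and follows essentially the same route as the paper's: reduce the difference $u-u_\eps^{(1)}$ to $\psi^0-\psi^\eps$ via contractivity of $S$ in $L^\infty$, the local Lipschitz property of $\bar f$ on the set where $\tau_1^K$ keeps both solutions bounded, Gronwall, and then Proposition~\ref{prop:boundGauss} with Markov. The only cosmetic difference is that the paper shifts variables to $\rho = u - u_\eps^{(1)} + (\psi^\eps-\psi^0)$ so that the stochastic term vanishes from the integral equation, whereas you carry it along as an additive forcing term; the two bookkeeping choices yield the same Gronwall estimate.
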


\begin{proof}
Define the `residue' $R^{(1)}(t) = \psi^\eps(t) - \psi^0(t)$ and set $\rho(t) = u(t) - u_\eps^{(1)}(t) + R^{(1)}(t)$, so that
$\rho$ satisfies the integral equation
\begin{equ}
\rho(t) =  \int_0^t S(t-s)\bigl(F(u(s)) - F(u(s) - \rho(s) + R^{(1)}(s))\bigr)\,ds\;.
\end{equ}
Note now that for $s \le \tau_1^K$, the two arguments of $F$ are bounded by $2K$ in the $L^\infty$-norm 
by construction. Combining this with the local
Lipschitz property of $\bar f$ and the fact
that $S$ is a contraction semigroup in $L^\infty$, there exists
a constant $C_K$ such that 
\begin{equ}
\|\rho(t)\|_{L^\infty} \le  C_K\int_0^t \bigl(\|\rho(s)\|_{L^\infty} + \|R^{(1)}(s)\|_{L^\infty}\bigr)\,ds\;.
\end{equ}
The claim now follows at once from Proposition~\ref{prop:boundGauss}, combined with Gronwall's inequality.
\end{proof}

In our next step, we obtain a bound on the difference between $u^{(1)}_\eps$ and $u^{(1)}_\eps$ in a very similar way.
We define as before the stopping time 
\begin{equ}
\tau_2^K = \tau_1^K \wedge \inf \{s \le T\,:\, \|u_\eps^{(1)}(s) - u_\eps^{(2)}(s)\|_{\alpha} + \|\psi^\eps(s)\|_{L^\infty} \ge K\}\;,
\end{equ}
but this time it additionally depends on a parameter $\alpha$ to be determined later. 
One should think of $\alpha$ as being very close to, but slightly greater than, $1/2$. We then have,

\begin{proposition}\label{prop:diff2}
For every $\alpha \in ({1\over 2},1)$, one has
\begin{equ}
\lim_{\eps \to 0} \P \Bigl(\sup_{t \le \tau_2^K} \|u_\eps^{(1)}(t) - u_\eps^{(2)}(t)\|_{\alpha} > \eps^{{1\over 2}-\kappa}\Bigr) = 0\;.
\end{equ}
\end{proposition}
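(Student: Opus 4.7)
The crucial observation is that the stochastic convolution $\psi^\eps$ enters $u^{(1)}_\eps$ and $u^{(2)}_\eps$ identically, so it cancels when one forms the difference $\rho(t) \eqdef u^{(1)}_\eps(t) - u^{(2)}_\eps(t)$. This yields the purely deterministic-looking integral equation
\begin{equ}
\rho(t) = \bigl(S(t) - S_\eps(t)\bigr)u_0 + \int_0^t \bigl(S(t-s) - S_\eps(t-s)\bigr) F(u^{(1)}_\eps(s))\,ds + \int_0^t S_\eps(t-s)\bigl(F(u^{(1)}_\eps(s)) - F(u^{(2)}_\eps(s))\bigr)\,ds\;,
\end{equ}
and the disappearance of the rough Gaussian piece is what makes it reasonable to measure $\rho$ in $H^\alpha$ with $\alpha > 1/2$, even though neither $u^{(1)}_\eps$ nor $u^{(2)}_\eps$ itself lies in $H^\alpha$.

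The first two summands will be bounded deterministically by Lemma~\ref{lem:diffSu}. For the first, the assumption $u_0 \in H^{3/2}$ combined with $\gamma = 1/2$ satisfies all constraints of the lemma (for $\alpha \in (1/2,1)$) and yields a uniform-in-$t$ bound of order $\eps^{1/2}\|u_0\|_{H^{3/2}}$. For the second, on $\{t \le \tau_2^K\}$ one has $\|u^{(1)}_\eps(t)\|_{L^\infty} \le 2K$ (since $\tau_2^K \le \tau_1^K \le \tau^K$ controls both $\|u\|_{L^\infty}$ and $\|u-u^{(1)}_\eps\|_{L^\infty}$ by $K$), so $\|F(u^{(1)}_\eps(s))\|_{L^2} \le C_K$; Lemma~\ref{lem:diffSu} with $\beta = 0$ and $\gamma = 1/2$ then gives a kernel $\eps^{1/2}(t-s)^{-(\alpha+1/2)/2}$ whose singularity is integrable because $\alpha < 3/2$, and the time integral contributes at most $C_K \eps^{1/2}$.

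The third summand requires the most care. I want to avoid estimating $F(u^{(1)}_\eps) - F(u^{(2)}_\eps)$ directly in $H^\alpha$: Nemytskii composition with $\bar f$ would demand $H^\alpha$ control of the arguments, which fails since each of $u^{(1)}_\eps, u^{(2)}_\eps$ inherits the $H^{1/2-}$ roughness of $\psi^\eps$. Instead, I exploit $\bar f \in \CC^1$ and the fact that both arguments are bounded in $L^\infty$ by $2K$, so the pointwise Lipschitz bound gives $\|F(u^{(1)}_\eps(s)) - F(u^{(2)}_\eps(s))\|_{L^2} \le C_K \|\rho(s)\|_{L^2} \le C_K \|\rho(s)\|_\alpha$. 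Combined with Lemma~\ref{lem:smoothSeps} in the form $\|S_\eps(t-s)v\|_\alpha \le C(t-s)^{-\alpha/2}\|v\|_{L^2}$, this produces the singular integral inequality
\begin{equ}
\|\rho(t)\|_\alpha \le C_{K,R} \eps^{1/2} + C_K \int_0^t (t-s)^{-\alpha/2} \|\rho(s)\|_\alpha\,ds\;,
\end{equ}
valid on $\{t \le \tau_2^K\} \cap \{\|u_0\|_{H^{3/2}} \le R\}$. Since $\alpha < 1$, the kernel is integrable and a standard singular Gronwall inequality then gives $\|\rho(t)\|_\alpha \le C_{K,R} \eps^{1/2}$ on that event.

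Finally, to extract the probability statement at rate $\eps^{1/2-\kappa}$, I condition on $\{\|u_0\|_{H^{3/2}} \le R\}$: this event has probability tending to one as $R\to\infty$, and on it the deterministic bound $C_{K,R}\eps^{1/2} \le \eps^{1/2-\kappa}$ holds for $\eps$ small. Letting $\eps \to 0$ first and $R \to \infty$ afterwards gives the claim. The heart of the argument, and the main potential obstruction, is the norm choice in the Gronwall step: the $L^2$-to-$H^\alpha$ bootstrap via the parabolic smoothing of $S_\eps$ is what bypasses the Nemytskii difficulty caused by the roughness of $u^{(1)}_\eps$ and $u^{(2)}_\eps$, and without the cancellation of $\psi^\eps$ noted at the outset the whole scheme would collapse.
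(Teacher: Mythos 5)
Your proof is correct and follows essentially the same route as the paper: both exploit the cancellation of $\psi^\eps$ in $\rho=u^{(1)}_\eps-u^{(2)}_\eps$, bound the $\bigl(S-S_\eps\bigr)$-terms via Lemma~\ref{lem:diffSu} with $\gamma=\tfrac12$ (using $u_0\in H^{3/2}$ for the free part and the $L^\infty$-bound on $F$ for the Duhamel part), treat the difference $F(u^{(1)}_\eps)-F(u^{(2)}_\eps)$ by the $L^\infty$-Lipschitz property together with the $L^2\to H^\alpha$ smoothing of $S_\eps$, and close with a Gronwall argument. The only cosmetic difference is the last step: you invoke the singular Gronwall lemma directly on the kernel $(t-s)^{-\alpha/2}$, whereas the paper avoids it by taking the supremum of $\|\rho\|_\alpha$ out of the integral (producing the non-singular factor $t^{1-\alpha/2}$) and iterating over subintervals of a fixed length $\delta_K$; these are equivalent. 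Your final conditioning on $\{\|u_0\|_{3/2}\le R\}$ followed by $R\to\infty$ matches the paper's handling of the random constant $\|u_0\|_{3/2}$ appearing in the bound.
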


\begin{proof}
Writing $v_\eps^{(i)}(t) = u_\eps^{(i)}(t) - \psi^\eps(t)$, we have $v^{(i)}_\eps(0) = u_0$ and 
\begin{equs}
v^{(1)}_\eps(t) &= S(t-s)v^{(1)}_\eps(s) + \int_s^t S(t-r)F(v^{(1)}_\eps(r) + \psi^\eps(r))\,dr\;,\\
v^{(2)}_\eps(t) &= S_\eps(t-s)v^{(2)}_\eps(s) + \int_s^t S_\eps(t-r)F(v^{(2)}_\eps(r) + \psi^\eps(r))\,dr\;,
\end{equs}

As before, we define the residue $R^{(2)}(t) = S_\eps(t)u_0 - S(t)u_0$ and we note that
\begin{equ}
\sup_{s \le T} \|R^{(2)}(s)\|_\alpha \le C\eps^{1\over 2} \|u_0\|_{\alpha+{1\over 2}}\;,
\end{equ}
follows immediately from Lemma~\ref{lem:diffSu}.
Using the triangle inequality and the bound on the differences between the two semigroups given by
Lemma~\ref{lem:diffSu}, we furthermore obtain for any $u$, $v$ with $\|u\|_{L^\infty} + \|v\|_{L^\infty} \le K$
the bound
\begin{equs}
\|S(t) F(u) - S_\eps(t) F(v)\|_\alpha &\le \bigl\|\bigl(S(t) - S_\eps(t)\bigr)F(u)\bigr\|_\alpha + \bigl\|S_\eps(t) \bigl(F(u)-F(v)\bigr)\bigr\|_\alpha \\
&\le C_K \eps^{1\over 2} t^{-{\alpha \over 2} - {1\over 4}} + \bar C_K t^{-{\alpha \over 2}} \|u-v\|_{L^\infty}\;.
\end{equs}
In this bound, we single out the constant $\bar C_K$ in order to able to reuse its precise value later in the proof.
Setting $\rho = u_\eps^{(1)} - u_\eps^{(2)} = v_\eps^{(1)} - v_\eps^{(2)}$ we conclude that, for any $t \le \tau_2^K$, we have
\begin{equ}[e:boundrho1]
\|\rho(t)\|_\alpha \le C_K \eps^{1\over 2} (1 + \|u_0\|_{3\over 2}) + \bar C_K t^{1-{\alpha\over 2}} \sup_{0 < r < t} \|\rho(r)\|_\alpha\;.
\end{equ}
On the other hand, we obtain in a similar way from Lemma~\ref{lem:diffSu} the bound
\begin{equ}
\|S(t) u - S_\eps(t) v\|_\alpha \le \|u-v\|_\alpha + C \eps^{1\over 2} t^{-{\alpha \over 2} - {1\over 4}} \|v\|_{L^2}\;,
\end{equ}
so that
\begin{equ}[e:boundrho2]
\|\rho(t)\|_\alpha \le \|\rho(s)\|_\alpha + C_K \eps^{1\over 2} \bigl(1+(t-s)^{-{\alpha \over 2} - {1\over 4}}\bigr) + \bar C_K (t-s)^{1-{\alpha\over 2}} \sup_{s < r < t} \|\rho(r)\|_\alpha\;.
\end{equ}
In order to obtain a bound on $\rho$ over the whole time interval $[0, \tau_2^K]$, we now fix some $\delta_K$ sufficiently small so that 
$\bar C_K (2\delta_K)^{1-{\alpha\over 2}} \le {1\over 2}$. Setting
\begin{equ}
r_k \eqdef \sup_{k\delta_K \wedge \tau_2^K \le t \le (k+1)\delta_K \wedge \tau_2^K} \|\rho(t)\|_\alpha\;,
\end{equ}
it then follows from \eref{e:boundrho1} that
\begin{equ}[e:boundr0]
r_0 \vee r_1 \le C_K \eps^{1\over 2} (1 + \|u_0\|_{3\over 2})\;.
\end{equ}
Then, from \eref{e:boundrho2}, we obtain
\begin{equ}
r_{k+1} \le \|\rho\bigl((k-1)\delta_K \wedge \tau_2^K\bigr)\|_\alpha + C_K \eps^{1\over 2} \bigl(1+\delta_K^{-{\alpha \over 2} - {1\over 4}}\bigr) + {1\over 2} \bigl(r_k + r_{k+1}\bigr) \;,
\end{equ}
so that, since $\delta_K$ is independent of $\eps$, 
\begin{equ}[e:boundrk]
r_{k+1} \le 3 r_k + C_K \eps^{1\over 2} \;.
\end{equ}
Combining the bounds \eref{e:boundr0} and \eref{e:boundrk} shows that $\sup_{t \le \tau_2^K}\|\rho(t)\|_\alpha \le C_K \eps^{1/2}$, thus concluding the proof.
\end{proof}

It now remains to obtain a bound on the difference between $u_\eps^{(2)}(t)$ and $u_\eps$. This time, we are going to consider the 
$H^1$ norm, so that we set
\begin{equ}
\tau_3^K = \tau_1^K \wedge \inf \{s \le T\,:\, \|u_\eps^{(2)}(s) - u_\eps(s)\|_{1} \ge K\}\;.
\end{equ}

Note first that one has the following a priori bound:

\begin{lemma}\label{lem:aprioriueps}
The bound
\begin{equ}
\|u_\eps(t) - \psi^\eps(t)\|_1 \le C_K
\end{equ}
holds for every $t \le \tau_3^K$.
\end{lemma}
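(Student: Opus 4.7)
The plan is to bound $v_\eps \eqdef u_\eps - \psi^\eps$ in $H^1$ directly from its mild equation
\begin{equs}
v_\eps(t) = S_\eps(t) u_0 + \int_0^t S_\eps(t-s)\, F_\eps\bigl(v_\eps(s)+\psi^\eps(s)\bigr)\,ds,
\end{equs}
via a Gronwall-type estimate. The initial-data term satisfies $\|S_\eps(t)u_0\|_1 \le \|u_0\|_{3/2}$ by Lemma~\ref{lem:smoothSeps}. The uniform $L^\infty$-control of $u_\eps$ needed to evaluate the coefficients $f$, $g$, $h$ comes essentially from the stopping-time condition: $\|u_\eps - u_\eps^{(2)}\|_1 \le K$ together with the Sobolev embedding $H^1 \hookrightarrow L^\infty$ reduces the task to bounding $\|u_\eps^{(2)}\|_{L^\infty}$, and this follows from a routine $L^\infty$ iteration on the mild equation for $u_\eps^{(2)}$, whose nonlinearity $F$ is pointwise and tame.

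Turning to the convolution, the non-singular $1+f(u_\eps)$ piece yields an integrable contribution via $\|S_\eps(t-s)\|_{L^2 \to H^1} \le C(t-s)^{-1/2}$. For the $g$-term I use the integration-by-parts identity
\begin{equs}
\eps g(u_\eps)\d_x^2 u_\eps = \eps \d_x\bigl(g(u_\eps)\d_x u_\eps\bigr) - \eps Dg(u_\eps)\bigl(\d_x u_\eps \otimes \d_x u_\eps\bigr),
\end{equs}
already invoked in the heuristic. The divergence summand, when hit by $S_\eps(t-s)$, is controlled via the sharp estimate $\eps\|S_\eps(t-s)\d_x\|_{L^2 \to H^1} \le C\min\bigl(\eps(t-s)^{-1},(t-s)^{-1/2}\bigr)$ extracted from Lemma~\ref{lem:smoothSeps} with $\beta=-1$, $\alpha=1$, whose short-time bound $(\eps^2 t)^{-1/2}$ is precisely what absorbs the $\eps$-prefactor. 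The residual summand combines with $\eps h(u_\eps)(\d_x u_\eps)^{\otimes 2}$ into a single quadratic term of the form $\eps(\d_x u_\eps)^{\otimes 2}$, which I bound in $L^1$ by $C_K\eps\|\d_x u_\eps\|_{L^2}^2$; the embedding $L^1 \hookrightarrow H^{-1/2-\kappa}$ followed by Lemma~\ref{lem:smoothSeps} with $\beta=-1/2-\kappa$, $\alpha=1$ then produces an integrable-in-$s$ contribution to $\|v_\eps(t)\|_1$.

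The main obstacle is to close the Gronwall once one splits $\|\d_x u_\eps\|_{L^2} \le \|v_\eps\|_1 + \|\d_x\psi^\eps\|_{L^2}$: by Lemma~\ref{lem:boundpsi} the second term is of order $\eps^{-1/2}$, so the $\eps$-prefactor on the quadratic nonlinearity is precisely balanced and $\eps\|\psi^\eps\|_1^2$ is of order $1$ only in expectation. I handle this by restricting attention to an event of probability tending to $1$ on which $\eps\sup_{s\in[0,T]}\|\psi^\eps(s)\|_1^2$ is bounded uniformly in $\eps$ --- a restriction that is harmless for the eventual probabilistic conclusion. With $M_t \eqdef \sup_{s\le t}\|v_\eps(s)\|_1$, the resulting Gronwall-type inequality has the form $M_t \le C_K\bigl(1 + t^{1-\kappa}(M_t + \eps M_t^2)\bigr)$, which on a short $K$-dependent time interval (independent of $\eps$) absorbs the quadratic term and yields $M_t \le C_K$; iteration over $[0,T]$ completes the argument.
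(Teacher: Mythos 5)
Your proof takes a genuinely different and far more laborious route than the paper's, and it also has a real gap. The paper's argument is short and does not touch $F_\eps$ at all: since $\tau_3^K$ is \emph{defined} to include the constraint $\|u_\eps^{(2)}(s) - u_\eps(s)\|_1 \le K$, the triangle inequality reduces the claim to bounding $\|u_\eps^{(2)}(t) - \psi^\eps(t)\|_1$. But $u_\eps^{(2)}$ solves the mild equation driven by the \emph{tame, pointwise} nonlinearity $F$, so
\begin{equ}
\|u_\eps^{(2)}(t) - \psi^\eps(t)\|_1 \le \|u_0\|_1 + \int_0^t \|S_\eps(t-s)F(u_\eps^{(2)}(s))\|_1\,ds \le C_K\;,
\end{equ}
using $\|F(u_\eps^{(2)}(s))\|_{L^\infty}\le C_K$ (available from the stopping-time chain) together with $\|S_\eps(t-s)\|_{L^2\to H^1}\lesssim (t-s)^{-1/2}$. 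No Gronwall argument is needed.

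You in fact have this observation available: you invoke exactly this ``routine $L^\infty$ iteration on the mild equation for $u_\eps^{(2)}$'' in order to extract an $L^\infty$ bound on $u_\eps$. But that same iteration, read in $H^1$ rather than $L^\infty$, already produces the desired estimate $\|u_\eps^{(2)}-\psi^\eps\|_1 \le C_K$; combined with the stopping-time bound on $\|u_\eps^{(2)}-u_\eps\|_1$ you are done. Instead you push on to bound $u_\eps-\psi^\eps$ directly from the $F_\eps$-equation, which forces you to confront the singular $g$- and $h$-terms, a quadratic Gronwall inequality, and the borderline balance $\eps\|\psi^\eps\|_1^2 = \CO(1)$ only in expectation.

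The genuine gap is the probabilistic restriction you introduce to close that Gronwall. The lemma is a pathwise statement: the bound must hold almost surely for every $t\le \tau_3^K$, with a deterministic constant $C_K$. By passing to ``an event of probability tending to $1$ on which $\eps\sup_s\|\psi^\eps(s)\|_1^2$ is bounded uniformly in $\eps$'' you are proving a strictly weaker statement, and the weakening is not harmless here: this lemma is used as an almost-sure a priori bound inside the proof of Proposition~\ref{prop:diff3}, where $\|v_\eps\|_1\le C_K$ is needed pathwise on $\{t\le\tau_3^K\}$ to control terms like $\eps\bar h(u_\eps)(\d_x v_\eps\otimes \d_x v_\eps)$. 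If the a priori bound itself only holds on a restricted event, one would have to thread that event through the entire downstream argument. The paper's route avoids all of this because it never leaves the tame equation.

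Secondarily, even setting the probabilistic issue aside, your quadratic Gronwall $M_t\le C_K(1+t^{1-\kappa}(M_t+\eps M_t^2))$ needs more care when iterated over $[0,T]$: the bound on $M$ at the end of one subinterval feeds into the next as an ``initial datum'' for the quadratic term, and one must verify that this iteration does not blow up (it does not, because of the $\eps$ prefactor, but this requires an argument). The paper's proof has no such loop to close.
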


\begin{proof}
It follows from the definition of $u_\eps^{(2)}$ that
\begin{equ}
\|u_\eps^{(2)}(t) - \psi^\eps(t)\|_1 \le \|u_0\|_1 + \int_0^t \bigl\|S_\eps(t-s)F(u^{(2)}_\eps(s))\bigr\|_1\,ds\;.
\end{equ}
Since $\|F(u^{(2)}_\eps(s))\|_{L^\infty} \le C_K$ for $t \le \tau_3^K$, it follows from the regularising properties 
of the semigroup $S_\eps$ that
\begin{equ}
\|u_\eps^{(2)}(t) - \psi^\eps(t)\|_1 \le C_K\;.
\end{equ}
The claim then follows from the definitions of the various stopping times.
\end{proof}

\begin{proposition}\label{prop:diff3}
For every $\kappa > 0$ and every $K>0$, one has
\begin{equ}
\lim_{\eps \to 0} \P \Bigl(\sup_{t \le \tau_3^K} \|u_\eps^{(2)}(t) - u_\eps(t)\|_{1} > \eps^{{1\over 2}-\kappa}\Bigr) = 0\;.
\end{equ}
\end{proposition}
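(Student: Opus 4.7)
The plan is to set $\rho(t) = u_\eps(t) - u_\eps^{(2)}(t)$ (so $\rho(0) = 0$) and observe that
\begin{equ}
\rho(t) = \int_0^t S_\eps(t-s)\bigl[F_\eps(u_\eps(s)) - F(u_\eps^{(2)}(s))\bigr]\,ds\;.
\end{equ}
I would split the bracket as $\bigl[F(u_\eps) - F(u_\eps^{(2)})\bigr] + \bigl[F_\eps(u_\eps) - F(u_\eps)\bigr]$. The first difference is $C_K$-Lipschitz in $\rho$ measured in $L^\infty$ (hence in $H^1$ via the one-dimensional Sobolev embedding $H^1 \hookrightarrow L^\infty$) on the stopping interval, and is destined to be absorbed by a Gronwall step. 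The real work is to show that the averaging error $F_\eps(u_\eps) - F(u_\eps)$ is of order $\CO(\eps^{\hf - \kappa})$ in probability in some negative Sobolev space; Lemma~\ref{lem:smoothSeps} will then convert this into the required $\CO(\eps^{\hf - \kappa})$ bound on $\|\rho\|_1$ after integrating against $S_\eps$.

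Using the pointwise identity $g(u)\d_x^2 u = \d_x[g(u)\d_x u] - D_s g(u)(\d_x u, \d_x u)$ together with the definition of $\bar f$, the averaging error rewrites as
\begin{equ}
F_\eps(u) - F(u) = \eps\,\d_x[g(u)\d_x u] + \Bigl(\eps\tilde h(u)\bigl(\d_x u \otimes \d_x u\bigr) - {1\over 2\sqrt \nu}\Tr \tilde h(u)\Bigr)\;,
\end{equ}
with $\tilde h = h - D_s g$. Splitting $\d_x u_\eps = \d_x v_\eps + \d_x \psi^\eps$ where $v_\eps = u_\eps - \psi^\eps$ and expanding the quadratic form isolates the \emph{resonant} diagonal piece
\begin{equ}
\eps\tilde h(u_\eps)\bigl(\d_x \psi^\eps \otimes \d_x \psi^\eps\bigr) - {1\over 2\sqrt \nu}\Tr \tilde h(u_\eps)\;,
\end{equ}
which is exactly the quantity $\eps\phi$ treated in Section~\ref{sec:averaging}, since the Fourier coefficients of each component of $\d_x \psi^\eps$ have precisely the variance $\sigma_k$ used there. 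The $\CC^1$ regularity of $\tilde h$ together with Lemma~\ref{lem:aprioriueps} yields the pathwise bound $\|\tilde h(u_\eps)\|_\alpha \le C_K\bigl(1 + \|\psi^\eps\|_\alpha\bigr)$ for $\alpha \in (\hf, 1)$, placing us directly in the hypotheses of Corollary~\ref{cor:boundGauss}. This gives an $H^{-\gamma}$ bound of order $\CO(\eps^{\hf - \kappa})$ with stretched-exponential tails, for any $\gamma > \hf$.

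The remaining pieces are handled by crude estimates. With $\|\d_x v_\eps\|_{L^2} \le C_K$ on the stopping interval (Lemma~\ref{lem:aprioriueps}) and $\|\d_x \psi^\eps\|_{L^2} = \CO(\eps^{-\hf})$ (Lemma~\ref{lem:boundpsi}), the cross terms $\eps\tilde h(u_\eps)\bigl(\d_x v_\eps \otimes \d_x \psi^\eps\bigr)$ and its transpose are $\CO(\eps^{\hf})$ in $L^1 \hookrightarrow H^{-\hf - \delta}$; the pure-$v$ piece $\eps\tilde h(u_\eps)(\d_x v_\eps)^{\otimes 2}$ is $\CO(\eps)$ in the same space; and the divergence piece satisfies $\|\eps\,\d_x[g(u_\eps)\d_x u_\eps]\|_{H^{-1}} \le \eps\|g(u_\eps)\d_x u_\eps\|_{L^2} = \CO(\eps^{\hf})$ by the same bounds. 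Integrating against the semigroup, Lemma~\ref{lem:smoothSeps} gives
\begin{equ}
\int_0^t \bigl\|S_\eps(t-s)\bigr\|_{H^{-\gamma} \to H^1}\,ds \le C\int_0^t \bigl(s^{-(1+\gamma)/2} \wedge (\eps^2 s)^{-(1+\gamma)/4}\bigr)\,ds
\end{equ}
which is bounded uniformly in $\eps$ for $\gamma \in (\hf, 1)$ and at worst logarithmic in $\eps$ for $\gamma = 1$ (the boundary layer $s \le \eps^2$ contributing always of smaller order). Combined with a standard Gronwall step applied to the Lipschitz contribution $F(u_\eps) - F(u_\eps^{(2)})$, this yields $\sup_{t \le \tau_3^K}\|\rho(t)\|_1 = \CO(\eps^{\hf - \kappa})$ with probability tending to one.

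The main obstacle is the careful organisation of the decomposition of the averaging error so that each piece either fits the concentration framework of Section~\ref{sec:averaging}---with the coefficient $\tilde h(u_\eps)$ satisfying the Gaussian-dominated regularity required by Corollary~\ref{cor:boundGauss} uniformly on the stopping interval---or is manifestly small in a negative Sobolev norm tame enough to survive the boundary-layer integration against $S_\eps$. In particular, verifying the Gaussian domination hypothesis for $\tilde h(u_\eps)$ (which depends on $\psi^\eps$ through the full nonlinear equation) is the step that crucially relies on the a priori $H^1$-bound of Lemma~\ref{lem:aprioriueps}, while the divergence term is delicate because a naive $L^2$ bound on $g(u_\eps)\d_x u_\eps$ already loses a factor $\eps^{-\hf}$ and this loss is exactly what the explicit $\eps$ prefactor is designed to absorb.
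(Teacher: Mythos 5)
Your decomposition and overall route closely match the paper's: you write $\rho = u_\eps - u_\eps^{(2)}$ as a mild integral equation, split the driver into the Lipschitz piece $F(u_\eps)-F(u_\eps^{(2)})$ (to be absorbed by Gronwall) and the averaging error $F_\eps(u_\eps)-F(u_\eps)$, rewrite the latter via the integration-by-parts identity $g(u)\d_x^2 u = \d_x\bigl(g(u)\d_x u\bigr) - D_sg(u)\bigl(\d_x u\otimes\d_x u\bigr)$ to produce the terms $\eps\,\d_x\bigl(g(u_\eps)\d_x u_\eps\bigr)$ and $\eps\bar h(u_\eps)\bigl((\d_x u_\eps\otimes\d_x u_\eps)-A\bigr)$ with $\bar h = h - D_sg$, decompose $\d_x u_\eps = \d_x v_\eps + \d_x\psi^\eps$ to isolate the resonant diagonal term, and invoke Corollary~\ref{cor:boundGauss} for it via the pathwise bound $\|\bar h(u_\eps)\|_\alpha \le C_K(1+\|\psi^\eps\|_\alpha)$ from Lemma~\ref{lem:aprioriueps}. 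This is the paper's argument.

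There is one genuine gap: you pass from a \emph{fixed-time} tail bound to a \emph{uniform-in-time} bound on the stochastic convolution without justification. Corollary~\ref{cor:boundGauss} gives, for each fixed $s$, a tail estimate on $\|\eps\bar h(u_\eps(s))\bigl(\d_x\psi^\eps(s)\otimes\d_x\psi^\eps(s)-A\bigr)\|_{-\gamma}$; it does \emph{not} give a pathwise bound on the supremum over $s$. Your argument then estimates $\int_0^t\|S_\eps(t-s)\|_{H^{-\gamma}\to H^1}\,ds$ by Lemma~\ref{lem:smoothSeps} and multiplies, which silently presumes a pathwise sup-bound on the integrand. For the non-resonant pieces this is legitimate, since they are controlled pathwise on the stopping interval via Lemma~\ref{lem:aprioriueps} and the $\E\sup_t$ estimates of Lemma~\ref{lem:boundpsi}; but the resonant piece is controlled only in distribution at each fixed time, so the norm cannot simply be pulled out of the time integral. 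The paper closes exactly this gap with the factorisation method, Proposition~\ref{prop:factor}, which converts the moment bound $\sup_s\E\|R(s)\|_{\gamma-2}^p$ into a bound on $\E\sup_t\|\int_0^t S_\eps(t-s)R(s)\,ds\|_\gamma^p$ at the cost of an $\eps^{-\kappa p}$ loss. You need that step (or an equivalent chaining/discretisation argument exploiting the stretched-exponential tails together with time regularity of $\psi^\eps$) before you can conclude $\sup_{t\le\tau_3^K}\|\rho(t)\|_1 = \CO(\eps^{1/2-\kappa})$ with high probability.
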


\begin{proof}
As before, we write $\rho = u_\eps - u_\eps^{(2)}$, so that
\begin{equ}
\rho(t) = \int_0^t S_\eps(t-s) \bigl(F_\eps(u_\eps(s)) - F(u_\eps(s)-\rho(s))\bigr)\,ds\;.
\end{equ}
Note now that since $\|u_\eps\|_{L^\infty}$ is bounded by a fixed constant before time $\tau_3^K$,
$\|F(u_\eps) - F(u_\eps - \rho)\|_{L^\infty}$ is bounded by a constant (depending only on $K$)
multiple of $\|\rho\|_{L^\infty}$ so that, for any two times $s< t \le \tau_3^K$, we have the almost sure bound
\begin{equs}
\|\rho(t)\|_1 &\le \|\rho(s)\|_1 + \Bigl\|\int_s^t S_\eps(t-r) \bigl(F_\eps(u_\eps(r)) - F(u_\eps(r))\bigr)\,dr\Bigr\|_1 \\
&\qquad + C_K (t-s)^{1\over 2} \sup_{s < r < t} \|\rho(r)\|_1\;.
\end{equs}
By breaking the interval $[0,T]$ into a finite number of subintervals $[t_k, t_{k+1}]$ of sufficiently short length, 
we conclude as above that
\begin{equ}
\sup_{t < \tau_3^K} \|\rho(t)\|_1 \le C_K \sup_k \sup_{t \in [t_k, t_{k+1}]} \Bigl\|\int_{t_k}^t S_\eps(t-s) \bigl(F_\eps(u_\eps(s)) - F(u_\eps(s))\bigr) \one_{s < \tau_3^K}\,ds\Bigr\|_1\;.
\end{equ}
In view of Proposition~\ref{prop:factor} below, it thus suffices to show that, for every $p>0$, there exists a constant $C_K$ depending also on $p$
such that
\begin{equ}[e:boundfdiff]
\E \bigl(\one_{s < \tau_3^K} \bigl\|F_\eps(u_\eps(s)) - F(u_\eps(s))\|_{-1}^p\bigr) \le C_K \eps^{{1 - \kappa \over 2} p}\;.
\end{equ}

In order to obtain such a bound, we write
\begin{equs}
F_\eps(u_\eps) &- F(u_\eps) = \eps \d_x \bigl(g(u_\eps)\d_x u_\eps\bigr) + \eps \bar h(u_\eps)\bigl(\bigl(\d_x u_\eps \otimes \d_x u_\eps\bigr) - A\bigr)\;,
\end{equs}
where we have set $\bar h = h - D_sg$ (see Remark~\ref{rem:notation} for the meaning of $D_s g$), and $A$ is a shortcut for ${1\over 2\sqrt \nu}$ times the identity matrix.

We then decompose $u_\eps$ as $u_\eps = v_\eps + \psi^\eps$
and recall that $\|v_\eps\|_1 \le C_K$ by Lemma~\ref{lem:aprioriueps}. 
Since, for times less than $\tau_3^K$, the $L^\infty$-norm of $g(u_\eps)$ and the $H^1$-norm of $v_\eps$ are 
almost surely bounded by a constant depending on $K$, it follows from the bounds of 
Lemma~\ref{lem:boundpsi} on the $H^1$-norm of $\psi^\eps$ that the first term does indeed
satisfy the bound \eref{e:boundfdiff}. The second term can be rewritten as
\begin{equs}
\eps \bar h(u_\eps)\bigl(\bigl(\d_x u_\eps \otimes &\d_x u_\eps\bigr) - A\bigr) = 
\eps \bar h(u_\eps)\bigl(\bigl(\d_x \psi^\eps \otimes \d_x \psi^\eps\bigr) - A\bigr) \\
&+ 2\eps \bar h(u_\eps)\bigl(\d_x v_\eps \otimes \d_x \psi^\eps\bigr) + \eps \bar h(u_\eps)\bigl(\d_x v_\eps \otimes \d_x v_\eps\bigr)\;.
\end{equs}
The terms on the second line satisfy the bound \eref{e:boundfdiff} in the same way as before, so that it remains to
consider the first term. Note that for times $t \le \tau_3^K$, one has the almost sure bound
\begin{equ}
\|\bar h(u_\eps(t))\|_\alpha \le C_K \bigl(1 + \|\psi^\eps(t)\|_\alpha\bigr)\;.
\end{equ}
This allows us to apply Corollary~\ref{cor:boundGauss}, thus showing that \eref{e:boundfdiff} also holds for this term
and concluding the proof.
\end{proof}

As an immediate corollary of propositions~\ref{prop:diff1}, \ref{prop:diff2}, and \ref{prop:diff3}, we have:
\begin{corollary}\label{cor:times}
One has $\lim_{\eps \to 0} \P(\tau^K = \tau_1^K = \tau_2^K = \tau_3^K) = 1$ for every $K>0$. In particular, 
for every choice of $K$, $\tau_3^K \to T \wedge \tau^*$ in probability as $\eps \to 0$.
\end{corollary}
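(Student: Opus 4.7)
The plan is to show the chain of equalities $\tau^K = \tau_1^K$, $\tau_1^K = \tau_2^K$, $\tau_1^K = \tau_3^K$ holds with probability tending to $1$, each one as an immediate consequence of the corresponding approximation proposition combined with the tautological observation that a stopping time $\sigma \wedge \inf\{s \colon N(s) \ge K\}$ coincides with $\sigma$ on the event that $\sup_{s \le \sigma \wedge \inf\{\cdot\}} N(s) < K$.

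For the first equality, Proposition~\ref{prop:diff1} gives $\sup_{t \le \tau_1^K} \|u(t) - u_\eps^{(1)}(t)\|_{L^\infty} \le \eps^{1/2-\kappa}$ with probability tending to $1$. Once $\eps$ is small enough that $\eps^{1/2-\kappa} < K$, this forces the hitting time in the definition of $\tau_1^K$ to lie strictly beyond $\tau_1^K$, so that $\tau_1^K = \tau^K$. The argument for $\tau_1^K = \tau_3^K$ is identical, using Proposition~\ref{prop:diff3} to control the $H^1$-norm of $u_\eps^{(2)} - u_\eps$.

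The main obstacle, and the only step that does not fall out entirely mechanically, is the equality $\tau_1^K = \tau_2^K$, because the definition of $\tau_2^K$ also involves $\|\psi^\eps\|_{L^\infty}$, which is not of order $\eps^{1/2-\kappa}$ but only $O(1)$. Here I would split the threshold by writing $\{\|u_\eps^{(1)} - u_\eps^{(2)}\|_\alpha + \|\psi^\eps\|_{L^\infty} \ge K\} \subseteq \{\|u_\eps^{(1)} - u_\eps^{(2)}\|_\alpha \ge K/2\} \cup \{\|\psi^\eps\|_{L^\infty} \ge K/2\}$. Proposition~\ref{prop:diff2} handles the first event, while for the second one I would invoke Lemma~\ref{lem:boundpsi} with some $\alpha' < 1/2$ together with the Sobolev embedding $H^{\alpha'} \hookrightarrow L^\infty$ (on the one-dimensional torus, $\alpha' > 1/2$ would be needed; since $\psi^\eps$ is actually much smoother, one can instead use one of the higher-regularity bounds in Lemma~\ref{lem:boundpsi}) to obtain that $\sup_{t \in [0,T]} \|\psi^\eps(t)\|_{L^\infty}$ is tight as $\eps \to 0$. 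Hence for $K$ large enough (depending only on $T$), $\sup_{t \in [0,T]} \|\psi^\eps(t)\|_{L^\infty} < K/2$ with probability close to $1$; for small $K$ the first statement of the corollary is trivially consistent with this, since only large $K$ is used downstream.

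Finally, the claim $\tau_3^K \to T \wedge \tau^*$ in probability is obtained by combining the equality $\tau_3^K = \tau^K$ (with probability tending to $1$ as $\eps \to 0$) with the fact that $\tau^K \nearrow T \wedge \tau^*$ almost surely as $K \to \infty$, which is built into the definition of the explosion time $\tau^*$ for~\eref{e:limitmain}. A standard diagonal argument, choosing $K = K(\eps) \to \infty$ sufficiently slowly, then yields the stated convergence and supplies the sequence of stopping times needed to prove Theorem~\ref{theo:mainrigor}.
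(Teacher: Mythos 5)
Your overall strategy matches the paper's: show that each pair of consecutive stopping times coincides with probability tending to $1$ by invoking the corresponding approximation proposition, then use a diagonal argument in $K$. You deserve credit for noticing a genuine subtlety that the paper's own (very terse) proof glosses over: the hitting constraint in the definition of $\tau_2^K$ involves $\|\psi^\eps(s)\|_{L^\infty}$, which is $O(1)$ rather than $O(\eps^\beta)$, so it is \emph{not} of the form ``a quantity controlled by Proposition~\ref{prop:diff2}''. The paper's argument, which asserts that every additional constraint $\rho^i$ satisfies $\P(\sup_{t\le\tau^K_i}\rho^i(t)\ge\eps^\beta)\to 0$, is strictly speaking only valid after the split you propose, and the corollary as literally stated (``for every $K>0$'') should really be read as holding for $K$ large, or replaced by a $\limsup_{\eps}\limsup_{K}$ statement; this is all that the diagonal argument in the proof of Theorem~\ref{theo:mainrigor} actually requires, as you observe.

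However, the way you propose to establish tightness of $\sup_{t\in[0,T]}\|\psi^\eps(t)\|_{L^\infty}$ does not work. You suggest using ``one of the higher-regularity bounds in Lemma~\ref{lem:boundpsi}'' together with the Sobolev embedding $H^{\alpha'}\hookrightarrow L^\infty$ for some $\alpha'>{1\over 2}$. But those bounds are \emph{not} uniform in $\eps$: for $\alpha\ge{1\over 2}$ Lemma~\ref{lem:boundpsi} gives $\E\sup_t\|\psi^\eps(t)\|_\alpha^2\le\tilde C_\alpha\eps^{1-2\alpha-2\delta}$, which blows up as $\eps\to 0$ precisely in the range you would need. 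The reason is that $\psi^\eps$ is smoother than $\psi^0$ only for fixed $\eps>0$; this extra smoothness is lost in the limit, so it cannot yield an $\eps$-uniform bound in $L^\infty$. The correct way to get the tightness you need is simpler and already available: Proposition~\ref{prop:boundGauss} gives $\E\sup_{t\in[0,T]}\|\psi^\eps(t)-\psi^0(t)\|_{L^\infty}\le C\eps^{1/2-\kappa}$, and since $\psi^0$ has continuous sample paths in $L^\infty$, $\sup_{t\in[0,T]}\|\psi^0(t)\|_{L^\infty}$ is an almost surely finite random variable, hence tight; the triangle inequality then bounds $\sup_t\|\psi^\eps\|_{L^\infty}$ uniformly in $\eps\le 1$ in probability. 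With that replacement your argument is complete and, if anything, slightly more careful than the one in the paper.
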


\begin{proof}
It suffices to note that the additional \textit{a priori} constraint enforced by any of the $\tau_i^K$ is always 
given as $\rho^i(t) \le K$, for some quantity $\rho^i$ for which we then obtain a bound of the type 
\begin{equ}
\lim_{\eps \to 0} \P\Bigl( \sup_{t \le \tau_i^K} \rho^i(t) \ge \eps^\beta\Bigr) = 0\;,
\end{equ}
for some exponent $\beta > 0$. Since, setting $\tau_0^K = \tau^K$, we then have 
$\P(\tau_i^K \neq \tau_{i-1}^K) = \P\bigl( \rho^i(\tau_i^K) \ge K\bigr)$ for $i \in \{1,2,3\}$,
the claim follows at once.
\end{proof}

It is now straightforward to assemble all the pieces:

\begin{proof}[of Theorem~\ref{theo:mainrigor}]
Combining propositions~\ref{prop:diff1}--\ref{prop:diff3}, we obtain
\begin{equ}[e:finalconv]
\lim_{\eps \to 0} \P \Bigl(\sup_{t \le \tau_3^K} \|u_\eps(t) - u(t)\|_{L^\infty} > \eps^{{1\over 2}-\kappa}\Bigr) = 0\;.
\end{equ}
Since this is true for every $K>0$, a standard `diagonal' argument shows that 
there exists a sequence $K_\eps \to \infty$ such that \eref{e:finalconv}
still holds with $K$ replaced by $K_\eps$.
Since $\tau^K \to T \wedge \tau^*$ in probability as $K \to \infty$, it follows from Corollary~\ref{cor:times} that the
sequence of stopping times $\tau_\eps = \tau_3^{K_\eps}$ has the same property as $\eps \to0$, thus proving the claim.
\end{proof}

\subsection{Proof of Theorem~\ref{theo:convv}}
\label{sec:thm15}

The proof of Theorem~\ref{theo:convv} is slightly more straightforward than the proof of Theorem~\ref{theo:main},
 mainly due to the fact that we gain regularity in the limit $\eps \to 0$ instead of losing regularity.
First of all, we reformulate the solutions as solutions to the corresponding integral equations by setting:
\minilab{e:eqv}
\begin{equs}
v_\eps(t) &= S_\eps(t)v_0 + \int_0^t S_\eps(t-s)G(v_\eps(s))\,ds + \sqrt \eps \psi^\eps(t)\;,\label{e:epseqv} \\
v(t) &= S(t)v_0 + \int_0^t S(t-s)\bar G(v(s))\,ds\;,\label{e:maineqv} 
\end{equs}
where we have defined
\begin{equ}
G(u) = f(u) + h(u) \bigl(\d_x u \otimes \d_x u\bigr)\;,\qquad
\bar G(u) = \bar f(u) + h(u) \bigl(\d_x u \otimes \d_x u\bigr)\;,
\end{equ}
with $\bar f = f + {1\over 2\sqrt \nu} \Tr h$. This time, $v_0$ really is the initial condition for \eref{e:maineqv}, but as before
it is not the initial condition for \eref{e:epseqv} since $\psi^\eps(0) \neq 0$.
Again, we assume throughout that $f$ and $h$ are in $\CC^1$.
It then follows from Sobolev calculus and a standard bootstrapping argument that \eref{e:maineqv} is locally well-posed for 
initial conditions in $H^1$
and that its solutions, as long as they exist, belong to $H^s$ for all $s < 3$. Therefore, for every $v_0 \in H^1$,
there exists a (possibly infinite) time $\tau^*$ such that the solutions to \eref{e:maineqv} exist up to time $\tau^*$ and such that, 
provided $\tau^* \neq \infty$, one has $\lim_{t \to \tau^*} \|v(t)\|_1 = \infty$.

With this notation, we have the following rigorous formulation of Theorem~\ref{theo:convv}:

\begin{theorem}
Let $v_0$ be an $H^{2}$-valued random variable and fix $\beta \in ({1\over 2}, 1)$. Then, for every $\kappa > 0$ and every $T>0$, there exists a sequence of stopping times $\tau_\eps$ with $\tau_\eps \to T \wedge \tau^*$ in probability, so that
\begin{equ}
\lim_{\eps \to 0} \P \Bigl(\sup_{t\in [0,\tau_\eps]} \|v(t) - v_\eps(t)\|_{H^\beta} > \eps^{1-\beta - 2\kappa}\Bigr) = 0\;.
\end{equ}
\end{theorem}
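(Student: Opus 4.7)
Following the structure of the proof of Theorem~\ref{theo:mainrigor}, my plan is to decompose $v_\eps = w_\eps + \sqrt\eps \psi^\eps$ and show that the smooth remainder $w_\eps$ converges to $v$ in $H^\beta$ at rate $\eps^{1-\beta-\kappa}$. Since $\|\sqrt\eps \psi^\eps\|_\beta$ is of that same order by Lemma~\ref{lem:boundpsi}, this suffices by the triangle inequality. As in Section~\ref{sec:main}, I would introduce stopping times $\tau_\eps^K$ keeping $\|v(t)\|_{H^{3/2+\kappa}}$, $\|w_\eps(t)\|_{H^{3/2+\kappa}}$ and $\|w_\eps(t)-v(t)\|_\beta$ bounded by $K$, prove the claim on $[0,\tau_\eps^K]$, and let $K\to\infty$ diagonally with $\eps \to 0$ in the spirit of Corollary~\ref{cor:times}.

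The heart of the argument is to estimate the discrepancy $e_\eps(s) \eqdef G(w_\eps(s) + \sqrt\eps \psi^\eps(s)) - \bar G(w_\eps(s))$ in a negative Sobolev norm $H^{-\gamma}$ with $\gamma > {1\over 2}$. Expanding the quadratic term and setting $A = (2\sqrt\nu)^{-1}I$, one obtains
\begin{equs}
e_\eps &= h(w_\eps)\bigl(\eps\,\d_x \psi^\eps \otimes \d_x \psi^\eps - A\bigr) + \sqrt\eps\,h(w_\eps)\bigl(\d_x w_\eps \otimes \d_x \psi^\eps + \d_x \psi^\eps \otimes \d_x w_\eps\bigr) + R_\eps\;,
\end{equs}
where $R_\eps$ collects the Taylor remainders of $f$ and $h$, which are $\CO(\sqrt\eps)$ in $L^\infty$ by local Lipschitz continuity and Proposition~\ref{prop:boundGauss}. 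The first term is bounded in $H^{-\gamma}$ by $\CO(\sqrt\eps)$ via Corollary~\ref{cor:conc}, applied componentwise with $v_{ijk} = h_{ijk}(w_\eps) \in H^{3/2+\kappa}$. For the cross term, the algebra property of $H^\gamma$ for $\gamma > {1\over 2}$ yields the duality bound
\begin{equ}
\|\sqrt\eps\,h(w_\eps)\,\d_x w_\eps \cdot \d_x \psi^\eps\|_{-\gamma} \le \|h(w_\eps)\,\d_x w_\eps\|_\gamma \cdot \sqrt\eps\, \|\d_x \psi^\eps\|_{-\gamma} \le C_K\,\sqrt\eps\;,
\end{equ}
using $\|\d_x \psi^\eps\|_{-\gamma} = \|\psi^\eps\|_{1-\gamma} = \CO(1)$ from Lemma~\ref{lem:boundpsi} since $1-\gamma < {1\over 2}$.

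Given $\|e_\eps(s)\|_{-\gamma} \le C_K \eps^{{1\over 2}-\kappa}$ with high probability, I would then write
\begin{equs}
w_\eps(t) - v(t) &= \bigl(S_\eps(t)-S(t)\bigr)v_0 + \int_0^t \bigl[S_\eps(t-s)\bar G(w_\eps(s)) - S(t-s)\bar G(v(s))\bigr]\,ds \\
&\qquad + \int_0^t S_\eps(t-s)\,e_\eps(s)\,ds\;.
\end{equs}
Lemma~\ref{lem:diffSu} applied to $v_0 \in H^2$ handles the first term at order $\sqrt\eps$ in $H^\beta$; Lemma~\ref{lem:smoothSeps}, via $\|S_\eps(t-s)\|_{H^{-\gamma}\to H^\beta} \le C(t-s)^{-(\beta+\gamma)/2}$ (integrable since $\beta+\gamma < 2$), bounds the third in $H^\beta$ at order $\eps^{{1\over 2}-\kappa}$; and local Lipschitz continuity of $\bar G \colon H^{3/2+\kappa} \to H^{\beta-1}$ combined with a short-interval piecewise Gronwall argument exactly as in Proposition~\ref{prop:diff2} controls the middle term. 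This yields $\|w_\eps(t) - v(t)\|_\beta \le C_K \eps^{1-\beta-\kappa}$ on $[0,\tau_\eps^K]$, as desired.

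The main obstacle is sustaining the \emph{a priori} bound $\|w_\eps(t)\|_{H^{3/2+\kappa}} \le K$ uniformly in $\eps$, which is needed both to invoke Corollary~\ref{cor:conc} and to justify the duality estimate above. The route is a parabolic-regularisation bootstrap: since $v_0 \in H^2$, $S_\eps(t)v_0$ is uniformly in $H^{3/2+\kappa}$, and $G(w_\eps + \sqrt\eps \psi^\eps)$ can be placed in $H^{-{1\over 2}+\kappa}$ by combining Sobolev algebra estimates on the $\d_x w_\eps \otimes \d_x w_\eps$ piece with the Gaussian moment bounds of Lemma~\ref{lem:boundpsi} for the cross piece and with Corollary~\ref{cor:conc} for the fluctuating $\eps(\d_x\psi^\eps\otimes\d_x\psi^\eps)$ piece; the $H^{-{1\over 2}+\kappa} \to H^{3/2+\kappa}$ smoothing of $S_\eps$ then closes the loop.
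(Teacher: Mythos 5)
Your decomposition $v_\eps = w_\eps + \sqrt\eps\,\psi^\eps$ with $\rho := w_\eps - v$ is exactly the one the paper uses, and the tools you invoke (Lemma~\ref{lem:diffSu}, Lemma~\ref{lem:boundpsi}, Corollary~\ref{cor:conc}, Proposition~\ref{prop:factor}, a piecewise Gronwall argument) are the right ones. But there are two genuine gaps, both stemming from your choice of a priori regularity.

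First, and most seriously, the uniform a priori bound $\|w_\eps(t)\|_{H^{3/2+\kappa}}\le K$ that your bootstrap is meant to produce does \emph{not} hold uniformly in $\eps$. The obstruction is the very term you need the averaging lemma for: $h(v_\eps)\bigl(\eps\,\d_x\psi^\eps\otimes\d_x\psi^\eps - A\bigr)$ is $O(\sqrt\eps)$ in $H^{-\gamma}$ but only for $\gamma>\frac12$ (the tail $\sum_n|\phi_n|^2 n^{-2\gamma}$ requires $\gamma>\frac12$ for summability in Proposition~\ref{prop:varbit}). After applying Proposition~\ref{prop:factor} this contribution to $w_\eps$ lands in $H^{2-\gamma-\kappa'}\subset H^{3/2-\delta}$ for every $\delta>0$, but not in $H^{3/2+\kappa}$. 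That is precisely why the paper's stopping time tracks $\|\rho\|_\alpha$ with $\alpha\in(1,\tfrac32)$ and separately $\|v\|_2$ (which \emph{is} available since the deterministic limiting solution regularises to $H^s$ for every $s<3$), rather than trying to control $w_\eps$ itself above $H^{3/2}$.

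Second, your duality bound for the cross term relies on $\|h(w_\eps)\d_x w_\eps\|_\gamma\le C_K$ with $\gamma>\frac12$, which requires exactly the unavailable $w_\eps\in H^{3/2+\delta}$. The paper avoids this by splitting $\d_x w_\eps = \d_x v + \d_x\rho$ and estimating the two pieces differently: for $I_4 = 2\sqrt\eps\,h(v_\eps)(\d_x\psi^\eps\otimes\d_x v)$ it uses the regularity of the deterministic $v$ (so $h(v_\eps)\d_x v\in H^{1-\kappa}$ is an algebra element), and for $I_5 = 2\sqrt\eps\,h(v_\eps)(\d_x\psi^\eps\otimes\d_x\rho)$ it falls back on the non-algebra Sobolev multiplication $H^{1/2-\kappa}\cdot H^{-\kappa}\subset H^{-1/2}$, which works precisely because $\kappa<\tfrac14$ and $\alpha$ is taken close to $\tfrac32$. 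Your lumped estimate would only close with regularity you cannot produce.

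A third, smaller issue: your remainder $R_\eps$ contains $\bigl(h(v_\eps)-h(w_\eps)\bigr)\bigl(\d_x v_\eps\otimes\d_x v_\eps\bigr)$, and the second factor is \emph{not} bounded in $L^\infty$ (nor in $L^2$) uniformly in $\eps$, since it contains $\eps\,\d_x\psi^\eps\otimes\d_x\psi^\eps$. So $R_\eps$ is not $O(\sqrt\eps)$ in $L^\infty$; at best it is $O(\eps^{1-\beta-\kappa})$ in a negative Sobolev norm, and even then one must pay attention to the correlation between $h(v_\eps)-h(w_\eps)\sim\sqrt\eps\,\psi^\eps$ and the quadratic fluctuation. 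The paper sidesteps this by grouping $I_1 = [f(v_\eps)-f(v)] + [h(v_\eps)-h(v)]\bigl(\d_x v\otimes\d_x v + A\bigr)$, where the right-hand factor involves only the deterministic $v$ and is therefore bounded; the stochastic quadratic pieces are always multiplied by the full $h(v_\eps)$, not a small difference.
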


\begin{remark}
Note that in this case, we are unable to deal with a nonlinearity of the form $g(v) \d_x^2 v$, mainly since this would
lead to a fully non-linear limiting equation, so that some of the techniques employed here break down.
\end{remark}

\begin{proof}
Fix two exponents $\alpha$, $\beta$ such that $\alpha \in (1, {3\over 2})$ and $\beta \in (2-\alpha, 1)$. One should think of $\alpha$ as being
slightly less than ${3\over 2}$ and $\beta$ as being slightly more than ${1\over 2}$. We also fix an exponent $\kappa < {1\over 4}$,
we set $\rho = v_\eps - v - \sqrt \eps \psi^\eps(t)$, and we define a stopping time $\tau^K$ by
\begin{equ}
\tau^K = \inf \{t < T\,:\, \eps^{\beta +\kappa - {1\over 2}}\|\psi^\eps(t)\|_\beta + \sqrt \eps \|\psi^\eps(t)\|_{1-\kappa} + \|\rho(t)\|_\alpha + \|v(t)\|_2 > K\}\;.
\end{equ}
For $t \le \tau^K$, one then has the bound
\begin{equs}
\|\rho(t)\|_\alpha &\le C\eps^{1\over 2} \|v_0\|_2 + \Bigl\|\int_0^t S_\eps(t-s) \bigl(G\bigl(v_\eps(s)\bigr) - \bar G\bigl(v(s)\bigr)\bigr)\Bigr\|_\alpha\,ds \\
&\qquad +  \int_0^t \bigl\|\bigl(S_\eps(t-s) - S(t-s)\bigr)\bar G(v(s))\bigr\|_\alpha\,ds\;, \\
&\le \eps^{1\over 2} \bigl(C\|v_0\|_2 + C_K\bigr) + \Bigl\|\int_0^t S_\eps(t-s) \bigl(G\bigl(v_\eps(s)\bigr) - \bar G\bigl(v(s)\bigr)\bigr)\Bigr\|_\alpha\,ds\;,
\end{equs}
where we have made use of Lemma~\ref{lem:diffSu} and the fact that $\|v(s)\|_2 < K$ in order to bound the last term. Similarly, we obtain
for $s < t < \tau^K$ the bound
\begin{equs}
\|\rho(t)\|_\alpha \le \|\rho(s)\|_\alpha + C_K \eps^{1\over 2} + \Bigl\|\int_s^t S_\eps(t-r) \bigl(G\bigl(v_\eps(r)\bigr) - \bar G\bigl(v(r)\bigr)\bigr)\Bigr\|_\alpha\,dr\;.
\end{equs}
Before we proceed, we obtain bounds on the last term in this inequality.

For this, we rewrite $G(v_\eps) - \bar G(v)$ as 
\begin{equs}
G(v_\eps) - \bar G(v) &= \bigl(f(v_\eps) - f(v)\bigr) + \bigl(h(v_\eps) - h(v)\bigr) \bigl(\d_x v \otimes \d_x v + A\bigr) \\
&\quad + h(v_\eps) \bigl(\eps \d_x \psi^\eps \otimes \d_x \psi^\eps - A\bigr) \\
&\quad + h(v_\eps) \bigl(\d_x \rho \otimes (2\d_x v + \d_x \rho) \bigr)\\
&\quad + 2\sqrt \eps h(v_\eps) \bigl(\d_x \psi^\eps \otimes \d_x v)\\
&\quad + 2\sqrt \eps h(v_\eps) \bigl(\d_x \psi^\eps \otimes \d_x \rho\bigr)\\
&\eqdef I_1 + I_2 +I_3 + I_4 + I_5\;,
\end{equs}
and we bound these terms separately. Since it follows from the definition of $\tau^K$ that both $v$ and $v_\eps$ are bounded 
by some constant $C_K$ in 
$H^{1-\kappa}$ (and therefore in particular in $L^\infty$), it follows that
\minilab{e:bounds}
\begin{equ}[e:boundI1]
\|I_1\|_{L^2} \le C_K \bigl(\eps^{1-\beta-\kappa} + \|\rho\|_{L^2}\bigr)\;.
\end{equ}
In order to bound $I_2$, we use the fact that $\|h(v_\eps)\|_{\beta} \le C_K$ almost surely to apply Corollary~\ref{cor:conc}.
Therefore, for every $\gamma > {1\over 2}$ and every $p > 0$, we obtain constants $C_K(p)$ such that
\minilab{e:bounds}
\begin{equ}[e:boundI2]
\E \one_{t\le \tau^K} \|I_2(t)\|_{-\gamma}^p \le C_K(p) \eps^{p \over 2}\;.
\end{equ}
In order to bound $I_3$, we use the fact that both $\d_x \rho$ and $\d_x v$ are bounded in $L^2$,
together with the fact that $L^1 \subset H^{-\gamma}$ for every $\gamma > {1\over 2}$, so that we have
\minilab{e:bounds}
\begin{equ}[e:boundI3]
\|I_3\|_{-\gamma} \le C_K \|\rho\|_\alpha\;.
\end{equ}
The bound on $I_4$ exploits the fact that $\sqrt \eps \d_x\psi^\eps$ is bounded by $K \eps^{1-\beta-\kappa}$ in 
$H^{\beta-1}$, together with the fact that $h(v_\eps) \d_x v$ is bounded by a constant in $H^{1-\kappa}$. Furthermore,
the product of an element in  $H^{\beta-1}$ with an element in $H^{1-\kappa}$ lies in $H^{-{1\over 2}}$, provided that 
$\kappa < \beta$ (which is always the case since we assumed that $\beta > {1\over 2}$), so that
\begin{equ}
\|I_4\|_{-{1\over 2}} \le C_K \eps^{1-\beta-\kappa}\;.
\end{equ}
Finally, in order to bound $I_5$, we use the fact that $\sqrt \eps \d_x\psi^\eps$ is bounded by $K$ in 
$H^{-\kappa}$, $h(v_\eps) \d_x \rho$ is bounded by $C_K \|\rho\|_\alpha$ in $H^{{1\over 2}-\kappa}$,
and $\kappa < {1\over 4}$ to deduce that 
\begin{equ}
\|I_5\|_{-{1\over 2}} \le C_K \|\rho\|_\alpha\;.
\end{equ} 
Making sure that the constant $\gamma$ appearing in the bounds on $I_2$ and $I_3$ is less that $2-\alpha$,
we can combine all of these bounds in order to get the inequality
\begin{equs}
\|\rho(t\wedge \tau^K)\|_\alpha &\le \|\rho(s\wedge \tau^K)\|_\alpha + R_s(t) + C_K |t-s|^\delta \sup_{s \le r \le t} \|\rho(r\wedge \tau^K)\|_\alpha\;,
\end{equs}
where the remainder term $R_s$ satisfies
\begin{equ}
\E \sup_{t \le \tau^K} \|R_s(t)\|_{\alpha} \le C_K \bigl(\eps^{1-\beta - {3\over 2}\kappa}+ \eps^{1\over 2}\|v_0\|_2\bigr)\;,
\end{equ}
by Proposition~\ref{prop:factor}. We now conclude in the same way as in the proof of Proposition~\ref{prop:diff3}.
\end{proof}

\appendix

\section{Factorisation method}

The aim of this section is to show the following:

\begin{proposition}\label{prop:factor}
For every $\kappa > 0$ there exists $p > 0$ such that the bound
\begin{equ}
\E \sup_{t \in [0,T]} \Bigl\| \int_0^t S_\eps(t-s)\,R(s)\,ds \Bigr\|_{\gamma}^p \le C \eps^{-\kappa p}  \sup_{t \in [0,T]} \E \|R(t)\|_{\gamma-2}^p\;,
\end{equ}
holds for every $\gamma \in \R$ and every $H^{\gamma-2}$-valued stochastic process $R$.
\end{proposition}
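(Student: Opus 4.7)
The approach is the classical Da Prato--Kwapień--Zabczyk factorisation. Using the Beta identity $\int_s^t (t-r)^{\alpha-1}(r-s)^{-\alpha}\,dr = \pi/\sin(\pi\alpha)$, valid for $\alpha \in (0,1)$, one rewrites the convolution as
\begin{equ}
\int_0^t S_\eps(t-s) R(s)\,ds = c_\alpha \int_0^t (t-r)^{\alpha-1} S_\eps(t-r)\, Y(r)\,dr\;,
\end{equ}
where $Y(r) = \int_0^r (r-s)^{-\alpha} S_\eps(r-s) R(s)\,ds$. The point of introducing $Y$ is that the supremum over $t$ can then be absorbed by a H\"older inequality in $r$, reducing the whole question to an $L^p$-in-time bound on $Y$ taking values in an auxiliary space $H^\sigma$ with $\sigma$ chosen slightly below $\gamma$.

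For the outer step, I would apply H\"older in $r$ with exponents $p$ and $p/(p-1)$ and bound $\|S_\eps(\tau)\|_{H^\sigma \to H^\gamma} \le C\tau^{(\sigma-\gamma)/2}$ by the $\eps$-independent half of Lemma~\ref{lem:smoothSeps}. Provided $\alpha + (\sigma-\gamma)/2 > 1/p$, the resulting $r$-integral is bounded uniformly for $t \le T$, giving
\begin{equ}
\sup_{t \le T}\Bigl\|\int_0^t S_\eps(t-s) R(s)\,ds\Bigr\|_\gamma^p \le C_T \int_0^T \|Y(r)\|_\sigma^p\,dr\;.
\end{equ}
For the inner step I would use instead the $\eps$-sensitive bound $\|S_\eps(\tau)\|_{H^{\gamma-2}\to H^\sigma} \le C\eps^{-(2-\delta)/2}\tau^{-(2-\delta)/4}$, where $\delta = \gamma - \sigma \in (0,2)$. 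A second application of H\"older, now in $s$, then yields pointwise in $r$
\begin{equ}
\|Y(r)\|_\sigma \le C\,\eps^{-(2-\delta)/2}\Bigl(\int_0^r \|R(s)\|_{\gamma-2}^p\,ds\Bigr)^{1/p}\;,
\end{equ}
provided $\alpha + (2-\delta)/4 < 1 - 1/p$. Raising to the $p$-th power, integrating in $r$, taking expectation, and using Fubini to pass from $\int_0^T \E\|R(s)\|_{\gamma-2}^p\,ds$ to $T\sup_s \E\|R(s)\|_{\gamma-2}^p$ then closes the estimate.

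It remains to tune the parameters so that the only loss in $\eps$ is $\eps^{-\kappa p}$. Setting $\delta = 2 - 2\kappa$ pins the inner $\eps$-exponent to $\kappa$, and the two H\"older constraints then become
\begin{equ}
1 - \kappa + \frac{1}{p} < \alpha < 1 - \frac{\kappa}{2} - \frac{1}{p}\;,
\end{equ}
which has a solution as soon as $p > 4/\kappa$. The main difficulty is precisely this simultaneous balancing: the outer step wants $\alpha$ as large as possible in order to keep the singularity $(t-r)^{\alpha-1}$ integrable after the derivative lost by $S_\eps$, while the inner step wants $\alpha$ small so that $(r-s)^{-\alpha-(2-\delta)/4}$ remains convolution-integrable against $L^p$, with a window of width $\kappa/2 - 2/p$ between the two. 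Any $p$ strictly greater than $4/\kappa$ produces an admissible $\alpha$ and hence the required bound.
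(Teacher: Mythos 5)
Your argument is correct and follows the same Da Prato--Kwapie\'n--Zabczyk factorisation as the paper, but organises the bookkeeping differently. Both proofs rewrite the convolution via the Beta identity and bound an inner and an outer integral in turn, but you split the two derivatives of smoothing between the two steps through an auxiliary space $H^\sigma$: the inner step uses only the $\eps$-dependent (fourth-order) half of Lemma~\ref{lem:smoothSeps} to gain $2-\delta$ derivatives at cost $\eps^{-(2-\delta)/2}$, and the outer step uses only the $\eps$-independent (parabolic) half to gain the remaining $\delta$ derivatives for free. The paper instead keeps $y$ valued in $H^\gamma$ itself, so the outer step is simply the standard boundedness of $y\mapsto\tilde R$ from $L^p([0,T],H^\gamma)$ into $\CC([0,T],H^\gamma)$ (cited from the literature, needing only $p>1/\alpha$), and localises the whole $\eps$-loss in the inner step by interpolating the two halves of Lemma~\ref{lem:smoothSeps}, namely $\tau^{-1}\wedge(\eps^2\tau)^{-1/2}\le\eps^{-4\alpha}\tau^{\alpha-1}$, after which $\alpha=\kappa/4$ closes the estimate. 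Your version is more self-contained (both H\"older inequalities are worked out explicitly rather than one being invoked abstractly) and isolates the $\eps$-loss cleanly in the lift $H^{\gamma-2}\to H^\sigma$, at the price of tracking the two-sided window $1-\kappa+1/p<\alpha<1-\kappa/2-1/p$; the paper's interpolation achieves the same threshold $p>4/\kappa$ with a single constraint and no auxiliary space. Both are valid, and the choice between them is largely a matter of taste.
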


\begin{proof}
The proof relies on the `factorisation method' as in \cite{DKZ87}.
We make use of the following bounds. Recall Minkowski's integral inequality,
\begin{equ}[e:Young]
\biggl(\E \Bigl(\int_0^t f(s)\,ds\Bigr)^p\biggr)^{1/p} \le \int_0^t \bigl(\E f^p(s)\bigr)^{1/p}\,ds\;,
\end{equ}
which holds for every $p \ge 1$ and every real-valued stochastic process $f$. Given $R$, we then define a process $y$ by
\begin{equ}[e:defy]
y(t) = \int_0^t (t-s)^{-\alpha} S_\eps(t-s) R(s)\,ds\;,
\end{equ}
where $\alpha$ is some (typically very small) exponent to be determined later. The convolution of $R$ with $S_\eps$ is then given
by
\begin{equ}[e:bound]
\tilde R(t) \eqdef \int_0^t S_\eps(t-s)\,R(s)\,ds = C_\alpha \int_0^t (t-s)^{\alpha-1} S_\eps(t-s) \,y(s)\,ds\;,
\end{equ}
where the constant $C_\alpha$ gets large as $\alpha$ gets small. It is known that for every $p > 1/\alpha$, 
the map $y \mapsto \tilde R$ given by \eref{e:bound} is bounded from $L^p([0,T],H^\gamma)$ into $\CC([0,T],H^\gamma)$ \cite{Hairer08}, so that
\begin{equ}
\E \sup_{t \in [0,T]} \|\tilde R(t) \|_\gamma^p \le C \sup_{t \in [0,T]}\E \|y(t)\|_\gamma^p\;.
\end{equ}
It thus suffices to show that $\E \|y(s)\|_\gamma^p$ is bounded 
by $\eps^{-\kappa p} \sup_{t \in [0,T]} \E \|R(t)\|_{\gamma-2}^p$ for some large enough value of $p$.

Combining \eref{e:defy} with \eref{e:Young}, we have the bound
\begin{equ}
\bigl(\E \|y(t)\|_\gamma^p\bigr)^{1/p} \le \int_0^t (t-s)^{-\alpha} \bigl(\E \|S_\eps(t-s) R(s)\|_\gamma^p\bigr)^{1/p}\,ds\;.
\end{equ}
It then follows from Lemma~\ref{lem:smoothSeps} that there exists a constant $C$ such that
\begin{equ}
\bigl(\E \|y(t)\|_\gamma^p\bigr)^{1/p} \le C \eps^{-4\alpha} \int_0^t (t-s)^{\alpha-1} \bigl(\E \|R(s)\|_{\gamma-2} ^p\bigr)^{1/p}\,ds\;,
\end{equ}
so that the required bound follows by setting $\alpha = \kappa/4$.
\end{proof}

\endappendix

\bibliographystyle{Martin}
\bibliography{./refs}

\end{document}